\numberwithin{equation}{section}
\theoremstyle{plain}
\newtheorem{thm}{Theorem}[section]
\newtheorem{prop}[thm]{Proposition}
\newtheorem{cor}[thm]{Corollary}
\newtheorem{lem}[thm]{Lemma}
\theoremstyle{definition}
\newtheorem{defi}[thm]{Definition}
\newtheorem{exm}[thm]{Example}
\theoremstyle{remark}
\newtheorem{rmk}[thm]{\bf Remark}
\newcommand{\D}{\mathbf{D}}
\newcommand{\K}{\mathbf{K}}
\newcommand{\CK}{\mathcal{K}}
\newcommand{\ac}{\mathrm{ac}}
\def\Z{\mathbb{Z}}
\def\Ke{{\rm Ker}}
\def\Im{{\rm Im}}
\def\La{\mathbf{\Lambda}}
\def\I{\mathcal{I}}
\def\P{\mathcal{P}}
\def\M{\mathcal{M}}
\def\s{\sharp}
\def\C{\mathcal{C}}
\def\aa{\alpha}
\def\b{\beta}
\def\z{\zeta}
\def\g{\gamma}
\def\d{\delta}
\def\bu{\bullet}
\def\lra{\longrightarrow}
\def\xra{\xrightarrow[]{}}
\newcommand{\G}{\Gamma}
\renewcommand{\-}{\mbox{-}}
\newcommand{\Inj}{\mathrm{Inj}}
\newcommand{\rad}{\mathrm{rad}}
\newcommand{\Proj}{\mathrm{Proj}}
\newcommand{\Hom}{\mathrm{Hom}}
\newcommand{\Mod}{\mathrm{Mod}}
\begin{document}

\title{The projective Leavitt complex}

\author{Huanhuan Li}

\subjclass[2010]{16G20, 16E45, 18E30, 18G35.}

\keywords{ Leavitt path algebra, projective Leavitt complex, compact generator, dg quasi-balanced module}

\date{\today}

\begin{abstract} Let $Q$ be a finite quiver without sources, and $A$ be the corresponding radical square zero algebra. We construct an explicit compact generator for the homotopy category of acyclic complexes of projective $A$-modules. We call such a generator the projective Leavitt complex of $Q$. This terminology is justified by the following result: the opposite differential graded endomorphism algebra of the projective Leavitt complex of $Q$ is quasi-isomorphic to the Leavitt path algebra of $Q^{\rm op}$.  Here, $Q^{\rm op}$ is the opposite quiver of $Q$ and the Leavitt path algebra of $Q^{\rm op}$ is naturally $\Z$-graded and viewed as a differential graded algebra with trivial differential.
\end{abstract}

\maketitle

\section{Introduction}

Let $A$ be a finite dimensional algebra over a field $k$. We denote by ${\K_{\ac}}(A\-\Proj)$ the homotopy category of acyclic complexes of projective $A$-modules. This category is a compactly generated triangulated category whose subcategory of compact objects is triangle equivalent to the opposite category of the singularity category \cite{bbu,bo} of the opposite algebra $A^{\rm op}$. 

In this paper, we construct an explicit compact generator for the homotopy category ${\K_{\ac}}(A\-\Proj)$ in the case that $A$ is an algebra with radical square zero. The compact generator is called the \emph{projective Leavitt complex}. Recall from \cite[Theorem 6.2]{bcy} that the homotopy category ${\K_{\ac}}(A\-\Proj)$ was described in terms of Leavitt path algebra in the sense of \cite{bap, bamp}. We prove that the opposite differential graded endomorphism algebra of the projective Leavitt complex of a finite quiver without sources is quasi-isomorphic to the Leavitt path algebra of the opposite quiver. Here, the Leavitt path algebra is naturally $\Z$-graded and viewed as a differential graded algebra with trivial differential.

Let $Q$ be a finite quiver without sources, and $A=kQ/J^2$ be the corresponding algebra with radical square zero. We introduce the projective Leavitt complex $\P^{\bu}$ of $Q$ in Definition \ref{defproj}. Then we prove that $\P^{\bu}$ is acyclic; see Proposition \ref{projacy}. 

Denote by $L_{k}(Q^{\rm op})$ the Leavitt path algebra of $Q^{\rm op}$ over $k$, which is naturally $\Z$-graded. Here, $Q^{\rm op}$ is the opposite quiver of $Q$. We consider $L_k(Q^{\rm op})$ as a differential graded algebra with trivial differential.

The following is the main result of the paper.

\vskip 5pt

\noindent $\mathbf{Theorem.}$ \emph{Let $Q$ be a finite quiver without sources, and $A=kQ/J^{2}$ be the corresponding finite dimensional algebra with  radical square zero.}

\begin{enumerate}
\item[(1)] \emph{The projective Leavitt complex $\mathcal{P}^{\bullet}$ of $Q$ is a
compact generator for the homotopy category $\K_{\rm ac}(A\-\Proj)$.}

\item[(2)] \emph{The opposite differential graded endomorphism algebra of the projective Leavitt complex $\P^{\bu} $ of $Q$ is quasi-isomorphic to the Leavitt path algebra $L_k(Q^{\rm op})$. }\hfill $\square$
\end{enumerate}

The Theorem is a combination of Theorem \ref{tcproj} and Theorem \ref{rightqproj}. We mention that for the construction of the projective Leavitt complex $\P^{\bu}$, we use the basis of the Leavitt path algebra $L_k(Q^{\rm op})$ given by \cite[Theorem 1]{baajz}.



For the proof of $(1)$, we construct subcomplexes of $\P^{\bu}$. For $(2)$, we actually prove that the projective Leavitt complex has the structure of a differential graded $A$-$L_k(Q^{\rm op})$-bimodule, which is right quasi-balanced. Here, we consider $A$ as a differential graded algebra concentrated in degree zero, and $L_k(Q^{\rm op})$ is naturally $\Z$-graded and viewed as a differential graded algebra with trivial differential.

The paper is structured as follows.
In section $\ref{section2}$, we introduce the projective Leavitt complex $\P^{\bu}$ of $Q$ and prove that it is acyclic. In section $\ref{section3}$, we recall some notation and
prove that the projective Leavitt complex $\P^{\bu}$ is a compact generator of
the homotopy category of acyclic complexes of projective 
$A$-modules.
In section $\ref{section4}$, we recall some facts of the Leavitt path algebra and 
endow the projective Leavitt complex $\P^{\bu}$ with a differential graded $L_k(Q^{\rm op})$-module structure, which makes it become an $A$-$L_k(Q^{\rm op})$-bimodule.
In section $\ref{section5}$, we prove that the opposite differential graded endomorphism algebra of  $\P^{\bu}$ is quasi-isomorphic to the Leavitt path algebra $L_k(Q^{\rm op})$.

\section{The projective Leavitt complex of a finite quiver without sources}
\label{section2}

In this section, we introduce the projective Leavitt complex of a finite quiver without sources, and prove that it is an acyclic complex of projective modules over the corresponding finite dimensional algebra with radical square zero.

\subsection{The projective Leavitt complex}

A quiver $Q=(Q_{0}, Q_{1}; s, t)$
consists of a set $Q_{0}$ of vertices, a set $Q_{1}$ of arrows and
two maps $s, t: Q_{1}\xrightarrow []{}Q_{0}$,
which associate to each arrow $\alpha$ its starting vertex $s(\alpha)$ and its terminating
vertex $t(\alpha)$, respectively. A quiver $Q$
is finite if both the sets $Q_{0}$ and $Q_{1}$ are finite.

A path in the quiver $Q$
is a sequence $p=\alpha_{m}\cdots\alpha_{2}\alpha_{1}$ of arrows with
$t(\alpha_{k})=s(\alpha_{k+1})$ for $1\leq k\leq m-1$.
The length of $p$, denoted by $l(p)$, is $m$. The starting vertex of $p$, denoted by $s(p)$,
is $s(\alpha_{1})$.
The terminating vertex of $p$, denoted by $t(p)$, is $t(\alpha_{m})$.
We identify an
arrow with a path of length one. For each vertex $i\in Q_{0}$, we associate to it a trivial path $e_{i}$ of length zero. Set $s(e_i)=i=t(e_i)$.
Denote by $Q_{m}$ the set of all paths in $Q$ of length $m$ for each $m\geq 0$.

Recall that a vertex of $Q$ is a sink if there is no arrow starting at it and a source if there is no arrow terminating at it. Recall that for a vertex $i$ which is not a sink, we can choose an arrow $\b$ with $s(\b)=i$, which is called the \emph{special arrow} starting at vertex $i$; see \cite{baajz}. For a vertex $i$ which is not a source, fix an arrow $\gamma$
with $t(\gamma)=i$. We call the fixed arrow the \emph{associated arrow} terminating at $i$.
For an associated arrow $\alpha$,
we set \begin{equation} \label{eq:vv}
T(\alpha)=\{\beta\in Q_{1}\;| \;t(\beta)=t(\alpha), \beta\neq\alpha\}.
\end{equation}

\begin{defi}
For two paths $p=\aa_{m}\cdots\aa_{2}\aa_{1}$ and $q=\b_{n}\cdots \b_{2}\b_{1}$ with $m,n\geq 1$, we call the pair $(p, q)$ an
\emph{associated pair} in $Q$
if $s(p)=s(q)$, and either $\alpha_{1}\neq\beta_{1}$, or  $\alpha_{1}=\beta_{1}$ is not associated.  In addition, we call $(p, e_{s(p)})$ and $(e_{s(p)}, p)$ \emph{associated pairs} in $Q$ for each path $p$ in $Q$. \hfill $\square$
\end{defi}

For each vertex $i\in Q_{0}$ and $l\in \Z$, set
\begin{equation}
\label{p}
\mathbf{\Lambda}^{l}_{i}=\{(p, q)\;|\;(p, q) \text{~is an associated pair with~}l(q)-l(p)=l \text{~and~} t(p)=i\}.
\end{equation}

\begin{lem} Let $Q$ be a finite quiver without sources. The above set $\mathbf{\Lambda}^{l}_{i}$
is not empty for each vertex $i$ and each integer $l$.
\end{lem}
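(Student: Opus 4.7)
The plan is to split by the sign of $l$, and for $l>0$ further by whether $i$ admits a forward path of length $l$. In the two easy cases I use only the trivial clause of the associated-pair definition. For $l\le 0$, the no-sources hypothesis lets me build a path $p$ of length $-l$ ending at $i$ by iteratively choosing, starting from $e_i$, an arrow terminating at the current starting vertex; then $(p,e_{s(p)})$ is an associated pair by definition, $t(p)=i$ and $l(e_{s(p)})-l(p)=l$, so it lies in $\La^l_i$. For $l>0$ with $i$ admitting some forward path $q$ of length $l$, the pair $(e_i,q)$ lies in $\La^l_i$ for the same reason.

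The remaining case is $l>0$ when every forward path from $i$ has length less than $l$. The key preliminary observation is that if $u$ lies on a cycle $c$ of length $n$, then $u$ admits a forward path of every length $k\ge 0$, since appropriate initial segments of the iterates $c^{r}$ realise every such length. Consequently $i$ itself cannot lie on a cycle, for otherwise $i$ would admit forward paths of every length. Walking backward from $i$, which is possible indefinitely by the no-sources hypothesis, and applying pigeonhole to the finite $Q$, I extract a cycle $c$ together with a forward path $\sigma$ from some vertex of $c$ to $i$. Since $i\notin c$, there is a first vertex $u'$ on $c$ at which $\sigma$ leaves $c$, via some arrow $\eta$ which is necessarily distinct from the arrow $\gamma^{\ast}$ of $c$ exiting $u'$. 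Take $p$ to be the tail of $\sigma$ starting at $u'$, so that $t(p)=i$ and $p$ begins with $\eta$; and take $q$ to be a forward path from $u'$ of length $l(p)+l$ whose first arrow is $\gamma^{\ast}$, which exists by the cycle observation. Then $(p,q)$ has distinct first arrows, so it is an associated pair, and it visibly lies in $\La^l_i$.

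I expect the main obstacle to be exactly this last case: justifying the existence of the divergence vertex $u'$ depends on the fact that $i$ does not lie on the backward-reached cycle, which in turn relies on the standing assumption that $i$ admits no forward path of length $l$; and producing $q$ of the prescribed length with first arrow $\gamma^{\ast}$ relies on the cycle observation about realisable lengths. Once these two technical ingredients are in hand, the associated-pair condition is verified by the distinctness of first arrows, and the other cases reduce to the trivial clause of the definition.
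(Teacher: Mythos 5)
Your proof is correct, but it takes a genuinely different route from the paper. The paper does not argue inside $Q$ at all: it fixes the special arrows of $Q^{\rm op}$ to be the opposites of the associated arrows of $Q$, notes that $(p,q)\mapsto (q^{\rm op},p^{\rm op})$ is a bijection between $\mathbf{\Lambda}^{l}_{i}$ and the set of admissible pairs in $Q^{\rm op}$ with starting vertex $i$ and degree $-l$, and then quotes the non-emptiness of that set for quivers without sinks from the companion paper \cite[Lemma 2.2]{bli}. You instead prove the statement from scratch: the trivial clause of the definition of associated pair disposes of $l\le 0$ (walk backwards using the no-sources hypothesis) and of $l>0$ when $i$ admits a forward path of length $l$; in the remaining case, your observation that bounded forward length forces $i$ off every cycle, the backward walk with pigeonhole producing a cycle $c$ and a connecting path $\sigma$, and the divergence vertex $u'$ where $\sigma$ first leaves $c$ together yield a pair $(p,q)$ with distinct first arrows, and the required $q$ of length $l(p)+l$ beginning with the cycle arrow exists by winding around $c$. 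All of these steps check out (note that $\eta$ is not an arrow of $c$ at all, so it automatically differs from whichever arrow of $c$ exits $u'$). What the paper's proof buys is brevity and a reusable dictionary between associated pairs in $Q$ and admissible pairs in $Q^{\rm op}$, consistent with the conventions used later for the basis of $L_k(Q^{\rm op})$; what yours buys is self-containedness --- it is in effect a direct proof of the dual statement --- plus the small bonus that in the only nontrivial case the constructed pair has distinct first arrows, so the conclusion is visibly independent of the choice of associated arrows.
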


\begin{proof} Recall that the opposite quiver $Q^{\rm op}$ of the quiver $Q$ has arrows with opposite directions. For each vertex $i\in Q_0$, fix the special arrow of $Q^{\rm op}$ starting at $i$ as the opposite arrow of the associated arrow of $Q$ terminating at $i$. Observe that for each vertex $i$ and each integer $l$, $\mathbf{\Lambda}^{l}_{i}$ is one-to-one corresponded to
$\{(q^{\rm op}, p^{\rm op})\;\;|\; \;(q^{\rm op}, p^{\rm op})\text{~~is an admissible pair in~~} Q^{\rm op} \text{~~with}~~l(p^{\rm op})-
l(q^{\rm op})=-l\text{~~~~~~and}\\~s(p^{\rm op})=i\}.$ Here, refer to \cite[Definition 2.1]{bli} for the definition of admissible pair. By \cite[Lemma 2.2]{bli}, the later set is not empty. The proof is completed.
\end{proof}

Let $k$ be a field and $Q$ be a finite quiver. For each $m\geq 0$, denote by $kQ_{m}$ the $k$-vector space with basis $Q_{m}$. The path algebra $kQ$ of the quiver $Q$ is defined as $kQ=\bigoplus_{m\geq 0}kQ_{m}$, whose multiplication is given by such that for two paths $p$ and $q$ in $Q$, if $s(p)=t(q)$, then the product $pq$ is their concatenation;
otherwise, we set the product $pq$ to be zero. Here, we write the concatenation of paths
from right to left.

We observe that for any path $p$ and vertex $i\in Q_0$, $p e_{i}=\delta_{i, s(p)}p$ and
$e_ip=\delta_{i, t(p)}p$, where $\delta$ denotes the Kronecker symbol. We have that $\sum_{i\in Q_{0}}e_{i}$ is the unit of the path algebra $kQ$. Denote by $J$ the two-sided ideal of $kQ$ generated by arrows.

We consider the quotient algebra $A=kQ/J^{2}$;
it is a finite dimensional algebra with radical square zero. Indeed, $A=kQ_{0}\oplus kQ_{1}$
as a $k$-vector space. The Jacobson radical of $A$ is $\rad A=kQ_{1}$ satisfying $(\rad A)^{2}=0$. 
For each vertex $i\in Q_0$ and each arrow $\aa\in Q_1$, we abuse $e_i$ and $\aa$ with their canonical images in the algebra $A$.

Denote by $P_{i}=Ae_{i}$ the indecomposable projective left $A$-module for $i\in Q_{0}$. We have the following observation.

\begin{lem} \label{lemma-projmap} Let $i, j$ be two vertices in $Q$, and $f: P_i\xrightarrow[]{} P_j$ be a $k$-linear map. Then $f$ is a left $A$-module morphism if and only if 
\begin{equation*}
\begin{cases}
f(e_i)=\delta_{i, j}\lambda e_j+\sum\limits_{\{\b\in Q_{1}\;|\; s(\b)=j,\;t(\b)=i\}}\mu(\b)\b\\
f(\aa)=\delta_{i, j}\lambda\aa
\end{cases}
\end{equation*} 
with $\lambda$ and $\mu(\b)$ scalars for all $\alpha\in Q_1$
with $s(\aa)=i$.  \hfill $\square$
\end{lem}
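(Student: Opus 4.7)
The plan is to exploit that the projective module $P_i=Ae_i$ is generated by the single element $e_i$, so any left $A$-module map $f\colon P_i\to P_j$ is completely determined by $f(e_i)$. First I would record the $k$-basis $\{e_i\}\cup\{\aa\in Q_1\mid s(\aa)=i\}$ of $P_i$, obtained from $A=kQ_0\oplus kQ_1$ together with the identities $e_le_m=\d_{lm}e_l$ and $\aa e_l=\d_{l,s(\aa)}\aa$.

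Next I would pin down where $f(e_i)$ can land. Since $e_i$ is idempotent, $f(e_i)=f(e_i\cdot e_i)=e_i\cdot f(e_i)$, so $f(e_i)\in e_iP_j=e_iAe_j$. A direct computation using $e_l\aa=\d_{l,t(\aa)}\aa$ and $\aa e_l=\d_{l,s(\aa)}\aa$ shows
\[
e_iAe_j=\d_{i,j}\,ke_j\;\oplus\;\bigoplus_{\{\b\in Q_1\,\mid\, s(\b)=j,\,t(\b)=i\}}k\b,
\]
which is precisely the shape of the first formula after writing $f(e_i)=\d_{i,j}\lambda e_j+\sum_\b\mu(\b)\b$. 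Then, for an arrow $\aa$ with $s(\aa)=i$, the identity $\aa=\aa\cdot e_i$ in $P_i$ together with $A$-linearity forces
\[
f(\aa)=\aa\cdot f(e_i)=\d_{i,j}\lambda\,(\aa e_j)+\sum_\b\mu(\b)\,(\aa\b).
\]
Since $\aa\b\in J^2=0$ in $A$, the sum vanishes, and $\aa e_j=\d_{j,i}\aa$ turns the remaining term into $\d_{i,j}\lambda\aa$, giving the second formula.

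For the converse, given scalars $\lambda$ and $\mu(\b)$, define $f$ by the two formulas and extend $k$-linearly. I would verify $A$-linearity by checking $f(a\cdot m)=a\cdot f(m)$ on basis elements $a\in\{e_l\}\cup Q_1$ and $m\in\{e_i\}\cup\{\aa\mid s(\aa)=i\}$. The four cases reduce to elementary identities using $\rad(A)^2=0$: whenever we multiply two arrows we get $0$ on both sides, and the idempotent cases are handled by tracking starting and terminating vertices.

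I do not expect a serious obstacle; this is essentially a bookkeeping lemma whose content is that $\mathrm{Hom}_A(P_i,P_j)\cong e_iAe_j$ via $f\mapsto f(e_i)$, combined with an explicit description of $e_iAe_j$ for the radical square zero algebra $A=kQ/J^2$. The only point to be careful about is distinguishing the diagonal summand (present only when $i=j$) from the arrow summand indexed by $\{\b\mid s(\b)=j,\,t(\b)=i\}$.
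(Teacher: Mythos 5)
Your proof is correct, and it supplies exactly the routine verification the paper omits: the lemma is stated there as an observation with no proof given. Your identification $\mathrm{Hom}_{A}(Ae_i,Ae_j)\cong e_iAe_j$ via $f\mapsto f(e_i)$, the explicit description of $e_iAe_j$ for $A=kQ/J^2$, and the use of $J^{2}=0$ to kill the terms $\aa\b$ are precisely the intended bookkeeping, with all vertex conventions ($e_ip=\delta_{i,t(p)}p$, $pe_i=\delta_{i,s(p)}p$) handled correctly.
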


For a set $X$ and an $A$-module $M$, the coproduct $M^{(X)}$ will be
understood as
$\bigoplus_{x\in X}M\zeta_{x}$ such that each component $M\zeta_{x}$ is $M$.
For an element $m\in M$, we use $m\zeta_{x}$
to denote the corresponding element in the component $M\zeta_{x}$.

For a path $p=\alpha_{m}\cdots\aa_2\alpha_{1}$ in $Q$ of length
$m\geq 2$, we denote by $\widehat{p}=\alpha_{m-1}\cdots\alpha_{1}$ and $\widetilde{p}=\alpha_{m}\cdots\alpha_{2}$ the two \emph{truncations} of
$p$. For an arrow $\aa$, denote by $\widehat{\aa}=e_{s(\aa)}$ and $\widetilde{\aa}=e_{t(\aa)}$.

\begin{defi} \label{defproj} Let $Q$ be a finite quiver without sources. The \emph{projective Leavitt complex} $\mathcal{P}^{\bullet}=(\mathcal{P}^{l}, \delta^{l})_{l\in\Z}$ of $Q$ is defined as follows:

\noindent $(1)$ the $l$-th component $\mathcal{P}^{l}=\bigoplus_{i\in Q_{0}}{P_{i}}^{(\mathbf{\La}^{l}_{i})}$.

\noindent $(2)$ the differential $\delta^{l}: \mathcal{P}^{l}\longrightarrow \mathcal{P}^{l+1}$ is given by  $\delta^{l}(\alpha\zeta_{(p, q)})=0$ and \begin{equation*}\delta^{l}(e_{i}\zeta_{(p, q)})=\begin{cases}
 \beta\zeta_{(\widehat{p},q)}, & \text{if $p=\beta\widehat{p}$};\\
\sum_{\{\beta\in Q_{1}\;|\;  t(\beta)=i\}}\beta\zeta_{(e_{s(\beta)}, q\beta)},& \text{if $l(p)=0$,}
\end{cases}
\end{equation*}
for any $i\in Q_{0}$, $(p, q)\in \mathbf{\La}^{l}_{i}$
and $\alpha\in Q_{1}$ with $s(\aa)=i$. \hfill $\square$
\end{defi}

Each component $\mathcal{P}^{l}$ is a projective $A$-module. The differential $\d^{l}$ are $A$-module morphisms; compare Lemma \ref{lemma-projmap}. By the definition of the differential $\d^{l}$, it is direct to see that $\d^{l+1}\circ \d^{l}=0$ for each $l\in \Z$. In conclusion, $\mathcal{P}^{\bullet} $ is a complex of projective $A$-modules. 

\subsection{The acyclicity of the projective Leavitt complex} We will show that the projective Leavitt complex is acyclic. 

In what follows, $V, V'$ are two $k$-vector spaces and $f: V\xra V'$ is a $k$-linear map.
Suppose that $B$ and $B'$ are $k$-bases of $V$ and $V'$, respectively. We say that the triple $(f, B, B')$ satisfies \emph{Condition} (X) if $f(B)\subseteq B'$ and  the restriction of $f$ on $B$ is injective. In this case, we have $\Ke f=0$.

We suppose further that there  are disjoint unions $B=B_{0}\cup B_{1}\cup B_{2}$
and $B'=B'_{0}\cup B'_{1}$.
We say that the triple $(f, B, B')$ satisfies \emph{Condition} (W) if the following statements hold:
\begin{enumerate}
\item[(W1)] $f(b)=0$ for each $b\in B_{0}$;

\item[(W2)] $f(B_{1})\subseteq B_{1}'$ and $(f_{1}, B_{1}, B')$ satisfies Condition (X) where $f_{1}$ is the restriction of $f$ to the subspace spanned by $B_{1}$.

\item[(W3)] For $b\in B_{2}$, $f(b)=b_{0}+\sum_{c\in B_{1}(b)}f(c)$ for some $b_{0}\in B_{0}'$ and some finite subset $B_{1}(b)\subseteq B_{1}$. Moreover, if $b, b'\in B_{2}$ and $b\neq b'$, then $b_{0}\neq b_{0}'$.
\end{enumerate}

We have the following observation. The proof of it is similar as that of \cite[Lemma 2.7]{bli}. We omit it here.

\begin{lem} \label{kerimproj1} Assume that $(f, B, B')$ satisfies Condition (W),
then $B_{0}$ is a $k$-basis of $\Ke f$ and $f(B_{1})\cup \{b_{0}\;| \;b\in B_2\}$ is a $k$-basis of 
$\Im f$.\hfill $\square$
\end{lem}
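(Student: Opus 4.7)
The plan is to verify the two assertions by expanding an arbitrary element of $V$ in the basis $B=B_{0}\cup B_{1}\cup B_{2}$, applying Conditions $(W1)$--$(W3)$, and then exploiting the disjointness $B'_{0}\cap B'_{1}=\emptyset$ to decouple contributions inside $B'$.

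For the kernel statement, linear independence of $B_{0}$ is automatic because $B_{0}\subseteq B$, and $B_{0}\subseteq \Ke f$ by $(W1)$. The content is to show that $B_{0}$ spans $\Ke f$. Given $v=\sum_{b\in B_{0}}\lambda_{b}b+\sum_{c\in B_{1}}\mu_{c}c+\sum_{d\in B_{2}}\nu_{d}d$ with $f(v)=0$, I would use $(W1)$ and $(W3)$ to rewrite
$$
0\;=\;\sum_{d\in B_{2}}\nu_{d}\,d_{0}\;+\;\sum_{c\in B_{1}}\mu_{c}f(c)\;+\;\sum_{d\in B_{2}}\nu_{d}\sum_{e\in B_{1}(d)}f(e).
$$
Since $f(B_{1})\subseteq B'_{1}$ by $(W2)$, the first sum lies in the span of $B'_{0}$ and the remaining two lie in the span of $B'_{1}$. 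Disjointness of $B'_{0}$ and $B'_{1}$ forces the first sum to vanish; the injectivity of $d\mapsto d_{0}$ from $(W3)$ then gives $\nu_{d}=0$ for every $d\in B_{2}$. The remaining relation $\sum_{c}\mu_{c}f(c)=0$ together with the injectivity of $f|_{B_{1}}$ into $B'$ from $(W2)$ yields $\mu_{c}=0$ for every $c\in B_{1}$, so $v\in\mathrm{span}(B_{0})$.

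For the image statement, the set $f(B_{1})\cup\{b_{0}\mid b\in B_{2}\}$ is contained in $B'_{1}\cup B'_{0}=B'$ and its elements are pairwise distinct: $f|_{B_{1}}$ is injective with image in $B'_{1}$, the assignment $b\mapsto b_{0}$ is injective with image in $B'_{0}$, and $B'_{0}\cap B'_{1}=\emptyset$. Hence the set is a subset of the basis $B'$ and is therefore linearly independent. For the spanning property, given $v=\sum\lambda_{b}b+\sum\mu_{c}c+\sum\nu_{d}d$, one computes
$$
f(v)\;=\;\sum_{c\in B_{1}}\mu_{c}f(c)\;+\;\sum_{d\in B_{2}}\nu_{d}\,d_{0}\;+\;\sum_{d\in B_{2}}\nu_{d}\sum_{e\in B_{1}(d)}f(e),
$$
which is manifestly a $k$-linear combination of $f(B_{1})\cup\{b_{0}\mid b\in B_{2}\}$.

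The main obstacle is purely bookkeeping: one must track carefully which contributions land in $B'_{0}$ versus $B'_{1}$ and invoke the two injectivity statements in $(W2)$ and $(W3)$ at the right moment. Since Condition (W) is designed precisely to produce this block/upper-triangular behaviour with a "new" basis element of $B'_{0}$ detecting each $d\in B_{2}$, no genuine difficulty arises, and the argument reduces to the same pattern as in \cite[Lemma 2.7]{bli}.
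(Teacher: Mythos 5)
Your proof is correct and follows essentially the same route as the paper's own argument: expand an element in the basis $B=B_{0}\cup B_{1}\cup B_{2}$, apply (W1) and (W3), decouple using the disjoint union $B'=B'_{0}\cup B'_{1}$, and invoke the injectivity statements in (W2) and (W3). The only point you leave tacit is that each $b_{0}=f(b)-\sum_{c\in B_{1}(b)}f(c)$ lies in $\Im f$ (needed so that your linearly independent spanning set is actually a subset of $\Im f$), which is immediate from (W3) and which the paper records in a single sentence.
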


From now on, $Q$ is a finite quiver without sources. We consider the differential $\d^l: \P^l\xra \P^{l+1}$ in Definition \ref{defproj}. We have the following $k$-basis of $\P^l$: $$B^l=\{e_i\z_{(p, q)}, \aa\z_{(p, q)}\; |\; i\in Q_0, (p, q)\in \La^l_i \text{~and~} \aa\in Q_1 \text{~with~} s(\aa)=i\}.$$
Denote by $B^{l}_{0}=\{\alpha\zeta_{(p, q)}|~i
\in Q_{0},(p, q)\in \mathbf{\La}_{i}^{l} ~and~\alpha\in Q_{1} ~with~ s(\aa)=i\}$ a subset of $B^l$. Set $$B^l_2=\{e_i\z_{(e_i, q)}\;|\; i\in Q_0, (e_i, q)\in \La^l_i\}$$ for $l\geq 0$. If $l<0$, put $B^l_2=\emptyset$. Take $B^l_1=B^l\setminus (B^l_0\cup B^l_2)$. Then we have the disjoint union $B^l=B^l_0\cup B^l_1\cup B^l_2$. Set $B'^{l}_{0}=\{\b\zeta_{(e_{s(q)}, q)}\;|\;i\in Q_{0}, (e_{s(q)}, q)\in \La_{i}^{l}~\text{such that}~q
=\widetilde{q}\b ~and~\b \text{~is associated}\}$ for $l\in \Z$. 
We mention that $B'^{l}_{0}=\emptyset$ for $l<0$. Take $B'^{l}_{1}=B^{l}\setminus B'^{l}_{0}$ for $l\in \Z$. Then we have the disjoint union $B^{l}=B'^{l}_{0}\cup B'^{l}_{1}$ for each $l\in \Z$.

\begin{lem} \label{deltaker} For each $l\in\mathbb{Z}$, the set $B^{l}_{0}$
is a $k$-basis of ${\Ke}\d^{l}$ and the set $B^{l+1}_0$ is a $k$-basis of $\Im \d^l$.
\end{lem}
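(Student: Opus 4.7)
My plan is to apply Lemma~\ref{kerimproj1} to the triple $(\d^l, B^l, B^{l+1})$, equipped with the domain decomposition $B^l = B^l_0 \cup B^l_1 \cup B^l_2$ and the codomain decomposition $B^{l+1} = B'^{l+1}_0 \cup B'^{l+1}_1$ already set up above. Once Condition~(W) is verified, that lemma delivers $B^l_0$ as a basis of $\Ke \d^l$ and $\d^l(B^l_1) \cup \{b_0 \mid b \in B^l_2\}$ as a basis of $\Im \d^l$; a final matching step will then identify the latter set with $B^{l+1}_0$.

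Condition~(W1) is immediate from $\d^l(\aa\z_{(p,q)}) = 0$. For~(W2), the elements of $B^l_1$ are precisely the vertex generators $e_i\z_{(p,q)}$ with $l(p) \geq 1$, and the differential sends such a generator to $\b\z_{(\widehat p, q)}$ where $p = \b\widehat p$; this is plainly in $B^{l+1}_0$. To check it actually lies in $B'^{l+1}_1$, I rule out the only obstruction: if $\b\z_{(\widehat p, q)}$ lay in $B'^{l+1}_0$, then $\widehat p = e_{s(q)}$ would force $p = \b$ while $q$ would have $\b$ as its first arrow with $\b$ associated, contradicting that $(\b, q)$ is an associated pair. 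Injectivity of $\d^l$ restricted to $B^l_1$ is transparent since $(p,q)$ can be recovered from its image.

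The main work is~(W3). Fix $b = e_i\z_{(e_i, q)} \in B^l_2$, so $\d^l(b) = \sum_{\b : t(\b) = i} \b\z_{(e_{s(\b)}, q\b)}$. Let $\g$ be the associated arrow terminating at $i$, which exists because $Q$ has no sources. The term $b_0 := \g\z_{(e_{s(\g)}, q\g)}$ sits in $B'^{l+1}_0$ by construction. For any other $\b$ with $t(\b) = i$, the arrow $\b$ is non-associated, so $(\b, q\b)$ is an associated pair and $c_\b := e_i\z_{(\b, q\b)}$ lies in $B^l_1$ with $\d^l(c_\b) = \b\z_{(e_{s(\b)}, q\b)}$. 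Hence $\d^l(b) = b_0 + \sum_{\b \neq \g} \d^l(c_\b)$ in the required form. Distinctness of the $b_0$'s is clear: different vertices $i$ produce different associated arrows $\g$, and for fixed $i$, different paths $q$ yield different concatenations $q\g$.

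It remains to show $\d^l(B^l_1) \cup \{b_0 \mid b \in B^l_2\} = B^{l+1}_0$. Given $\aa\z_{(p', q')} \in B^{l+1}_0$, if $l(p') \geq 1$ then the first-arrow condition for $(\aa p', q')$ reduces to that for $(p', q')$, so $e_{t(\aa)}\z_{(\aa p', q')} \in B^l_1$ maps to $\aa\z_{(p', q')}$; if $p' = e_i$ and $(\aa, q')$ happens to be an associated pair, the same recipe with $\aa p' = \aa$ works; otherwise $\aa$ is associated and equals the first arrow of $q'$, in which case writing $q' = \widetilde{q'}\aa$ and taking $b = e_{t(\aa)}\z_{(e_{t(\aa)}, \widetilde{q'})} \in B^l_2$ gives exactly $b_0 = \aa\z_{(e_i, q')}$. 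The main difficulty throughout is the combinatorial case analysis around the associated-pair condition and the singled-out role of the associated arrow; once that bookkeeping is set, the lemma follows cleanly from Lemma~\ref{kerimproj1}.
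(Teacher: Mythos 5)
Your proposal is correct and follows essentially the same route as the paper: verify Condition (W) for the triple $(\d^l, B^l, B^{l+1})$ with the given decompositions, apply Lemma~\ref{kerimproj1}, and then identify $\d^l(B^l_1)\cup\{b_0\mid b\in B^l_2\}$ with $B^{l+1}_0$. If anything, your write-up is more careful than the paper's at two points the paper compresses: the check that $\d^l(B^l_1)$ avoids $B'^{l+1}_0$, and the three-way case analysis (including the case where $\aa$ is associated and is the first arrow of $q'$) in the converse inclusion, which the paper only states implicitly.
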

\begin{proof} For $l<0$, we have $B^{l}_{2}=\emptyset=B'^{l+1}_{0}$. We observe that the triple $(\d^l, B^l,B^{l+1})$ satisfies Condition (W). Indeed, $\d^l(b)=0$ for each $b\in B^l_0$. The differential $\d^l$ induces an injective map $\d^l: B^l_1\xra B'^{l+1}_1$. Then (W1) and (W2) hold. To see (W3), 
for $l\geq 0$ and each $i\in Q_{0}$, $e_i\zeta_{(e_i, q)}\in B^{l}_{2}$, we have 
$$\d^{l}(e_i\zeta_{(e_i, q)})=\alpha\zeta_{(e_{s(\alpha)}, q\aa)}+\sum_{\beta\in T(\alpha)}\d^{l}(e_{s(\b)}\zeta_{(\beta, q\b)}),$$ where $\aa\in Q_1$ such that $t(\aa)=i$ and $\aa$ is associated. Here, recall $T(\aa)$ from \eqref{eq:vv}. Thus $(e_i\zeta_{(e_i, q)})_0=\alpha\zeta_{(e_{s(\alpha)}, q\aa)}$ and the finite subset $B^l_1(e_i\zeta_{(e_i, q)})=\{e_{s(\b)}\zeta_{(\beta, q\b)}\;|\; \b\in T(\aa)\}$.

Recall that $B^{l+1}_0=\{\alpha\zeta_{(p, q)}|~i
\in Q_{0},(p, q)\in \mathbf{\La}_{i}^{l+1} ~and~\alpha\in Q_{1} ~with~ s(\aa)=i\}$. Now we prove that $B^{l+1}_0=\d^l(B^l_1)\cup \{b_0\;|\; b\in B^l_2\}$. We mention that the set $\{b_0\;|\; b\in B^l_2\}=\{\aa\z_{(e_{s(\aa)}, q\aa)}\;|\;q\in Q_l \text{~and~}\aa \text{~is associated with~} t(\aa)=s(q) \}$. Clearly, $\d^l(B^l_1)\cup \{b_0\;|\; b\in B^l_2\}\subseteq B^{l+1}_0$. Conversely, for each $i\in Q_{0}$ and $(p, q)\in \La^{l+1}_i$, we have 
$\alpha\zeta_{(p, q)}=\delta^{l}(e_{t(\alpha)}\zeta_{(\alpha p, q)})\in \d^l(B^{l}_{1})$ for $\aa\in Q_1$ with $s(\aa)=i$ but
$\alpha\zeta_{(p, q)}\notin \{b_0\;|\; b\in B^l_2\}$. Applying Lemma $\ref{kerimproj1}$ for the triple $(\d^l, B^l, B^{l+1})$, we are done. 
\end{proof}

\begin{prop} \label{projacy} Let $Q$ be a finite quiver without sources. Then the projective Leavitt complex $\mathcal{P}^{\bullet}$
of $Q$ is an acyclic complex.
\end{prop}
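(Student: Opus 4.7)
The plan is very short: the hard work has already been done in Lemma \ref{deltaker}, and acyclicity will follow by comparing the two bases produced there. Concretely, I would first apply Lemma \ref{deltaker} at the index $l$ to conclude that $B^{l}_{0}$ is a $k$-basis of $\Ke \d^{l}$. Then I would apply the same lemma at the index $l-1$ to conclude that $B^{(l-1)+1}_{0}=B^{l}_{0}$ is a $k$-basis of $\Im \d^{l-1}$. Since both subspaces of $\mathcal{P}^{l}$ admit the same set $B^{l}_{0}$ as a $k$-basis, they coincide, and hence $H^{l}(\mathcal{P}^{\bu})=0$ for every $l\in\Z$.

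Because the statement holds uniformly for all $l\in\Z$, there is no case distinction between $l\geq 0$ and $l<0$: Lemma \ref{deltaker} is stated for all integers and already handles the degenerate situation $B^{l}_{2}=\emptyset$ when $l<0$. So no additional argument is needed at the boundary, and no obstacle of substance remains. If there is any subtlety at all, it is purely bookkeeping: one must make sure that the symbol $B^{l}_{0}$ denotes the same subset of $\mathcal{P}^{l}$ in both applications of the lemma, which is immediate from the definition $B^{l}_{0}=\{\alpha\z_{(p,q)}\;|\;i\in Q_{0},\;(p,q)\in\La^{l}_{i},\;\alpha\in Q_{1}\text{ with }s(\alpha)=i\}$, since this set depends only on $l$ and not on whether we are viewing it as sitting inside the kernel of $\d^{l}$ or the image of $\d^{l-1}$.
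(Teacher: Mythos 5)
Your proposal is correct and is exactly the paper's argument: the paper's proof consists of the single line ``follows directly from Lemma \ref{deltaker}'', and the intended content of that line is precisely your observation that $\Ke\d^{l}$ and $\Im\d^{l-1}$ are both the $k$-span of the same subset $B^{l}_{0}$ of $\P^{l}$, hence equal. Your added remark that Lemma \ref{deltaker} needs no case split at the boundary $l<0$ is also accurate, since the lemma is stated for all $l\in\Z$.
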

\begin{proof} The statement follows directly from Lemma $\ref{deltaker}$.
\end{proof}

\begin{exm} \label{oneandoneproj}Let $Q$ be the following quiver with one vertex and one loop. $$\xymatrix{\scriptstyle{1}\cdot\ar@(ur, dr)[]|{\alpha}}$$
The unique arrow $\aa$ is associated. Set $e=e_1$ and $\mathbf{\La}^{l}=\La^l_1$
for each $l\in \mathbb{Z}$.
It follows that
\begin{equation*}\La^l=\begin{cases}
 \{(\aa^{-l}, e)\}, &
 \text{if $l<0$};\\
  \{(e, e)\},& \text{if $l=0$};\\
   \{(e, \aa^{l})\},&\text{if $l>0$}.
\end{cases}
\end{equation*}

The corresponding algebra $A$ with radical square zero
is isomorphic to $k[x]/(x^2)$. Write $A^{(\La^l)}=A\z^l$, where $\z^l=\z_{(\aa^{-l}, e)}$ for
$l<0$, $\z^0=\z_{(e, e)}$ and $\z^l=\z_{(e, \aa^l)}$ for $l>0$.
Then the projective Leavitt complex $\P^{\bullet}$ of $Q$ is as follows
$$
\cdots
\stackrel{}{\longrightarrow} A\z^{l-1}
\stackrel{\d^{l-1}}{\longrightarrow} A\z^{l}
\stackrel{\d^{l}}{\longrightarrow} A\z^{l+1}
\stackrel{}{\longrightarrow} \cdots,
$$ where the differential $\d^l$ is given by $\d^{l}(e\zeta^l)=\aa\z^{l+1}$
and $\d^{l}(\aa\zeta^l)=0$ for each $l\in \Z$.

Observe that $A$ is a self-injective algebra. The projective Leavitt complex $\P^{\bu}$ is isomorphic to the injective Leavitt complex $\I^{\bu}$ as complexes; compare \cite[Example 2.11]{bli}.
\end{exm}

\begin{exm} \label{exmproj}  Let $Q$ be the following quiver with one vertex and two loops. $$\xymatrix{\scriptstyle{1}\cdot \ar@(ul, dl)[]|{\alpha_{1}} \ar@(ur, dr)[]|{\alpha_{2}}}$$
We choose $\alpha_{1}$ to be the associated arrow terminating at the unique vertex. Set $e=e_1$ and 
$\mathbf{\La}^{l}=\La^l_1$ for each $l\in \mathbb{Z}$. A pair $(p, q)$ of paths lies in $\La^l$ if and only if $l(q)-l(p)=l$ and $p, q$ do not start with $\aa_1$ simultaneously. 

We denote by $A$ the corresponding radical square zero algebra. The projective Leavitt complex $\mathcal{P}^{\bullet}$ of $Q$ is as follows:

$$\qquad\xymatrix@R=6pt{
   \cdots \ar[r]^{\delta^{-1}} & A^{({\mathbf{\La}^{0}})}\ar[r]^{\delta^{0}} & A^{(\mathbf{\La}^{1})}\ar[r]^{\delta^{1}} &\cdots } $$ We write the differential $\d^0$ explicitely: $\d^0(\aa_k\z_{(p,q)})=0$ and \begin{equation*}\d^0(e\z_{(p,q)})=\begin{cases}
 \aa_k\zeta_{(\widehat{p}, q)}, & \text{if $p=\aa_k\widehat{p}$};\\
 \alpha_{1}\zeta_{(e, q\alpha_{1})}+\alpha_{2}\zeta_{(e, q\alpha_{2})},& \text{if $p=e$}.
\end{cases}
\end{equation*} for $k=1,2$ and $(p,q)\in \La^0$. 
\end{exm}

\section{The projective Leavitt complex as a compact generator}
\label{section3}

In this section, we prove that the projective Leavitt complex is a compact generator of
the homotopy category of acyclic complexes of projective $A$-modules.

\subsection{A cokernel complex and its decomposition}
Let $Q$ be a finite quiver without sources and $A$ be the corresponding algebra with radical square zero. For each $i\in Q_{0}$, $l\in\Z$ and $n\geq 0$,
denote by
$$\mathbf{\Lambda}^{l, n}_{i}=\{(p, q)~|~(p, q)\in \mathbf{\Lambda}^{l}_{i} \text{~with~} p\in Q_{n} \}.$$ Refer to $(\ref{p})$ for the definition of the set $\La^l_i$.

Recall the projective Leavitt complex
$\P^{\bullet}=(\P^{l}, \d^{l})_{l\in \Z}$ of $Q$. For each $l\geq 0$, we denote by $\CK^l=\bigoplus_{i\in Q_{0}}P_{i}^{(\La^{l, 0}_{i})}\subseteq \P^l$, where $P_i=Ae_i$. Observe that the differential $\d^l:\P^l\xra \P^{l+1}$ satisfies $\d^l(\CK^l)\subseteq \CK^{l+1}$. Then we have a subcomplex $\CK^{\bu}$ of $\P^{\bu}$, whose components $\CK^l=0$ for $l<0$. Denote by $\phi^{\bullet}=(\phi^{l})_{l\in \Z}: \CK^{\bullet}\longrightarrow \P^{\bullet}$
be the inclusion chain map by setting $\phi^{l}=0$ for $l<0$. We set $\C^{\bu}$ to be the cokernel of $\phi^{\bu}$.

We now describe the cokernel $\C^{\bu}=(\C^l, \widetilde{\d}^l)$ of $\phi^{\bu}$. 
For each vertex $i\in Q_{0}$ and $l\in \Z$,
set $$\La^{l, +}_i=\bigcup_{n>0}\La^{l, n}_{i}.$$ Observe that we have the disjoint union $\La^l_i=\La^{l, 0}_i\cup \La^{l, +}_i$ for $l\geq 0$ and $\La^l_i=\La^{l, +}_i$ for $l<0$. The component of $\C^{\bu}$ is $\C^l=\bigoplus_{i\in Q_{0}}P_{i}^{({\La^{l,+}_{i}})}$ for each $l\in\Z$. We have $\C^l=\P^l$ for $l<0$ and the differential $\widetilde{\d}^l=\d^l$ for $l\leq -2$. The differential $\widetilde{\d}^l:\C^l\xra \C^{l+1}$ for $l\geq -1$ is given as follows:  $\widetilde{{\d}^{l}}(\aa\zeta_{(p, q)})=0$ and
\begin{equation*}\widetilde{{\d}^{l}}(e_i\zeta_{(p, q)})=\begin{cases}
0, &\text{if $l(p)=1$;}\\
\d^l(e_{i}\zeta_{(p, q)}),& \text{otherwise},
\end{cases}
\end{equation*} for any $i\in Q_{0}$, $(p, q)\in \La^{l, +}_{i}$ and $\alpha\in Q_{1}$ with $s(\alpha)=i$. The restriction of $\widetilde{\d}^l$ to $\bigoplus_{i\in Q_0} P_i^{(\La^{l,1}_i)}$ is zero for $l\geq -1$. We emphasize that the differentials $\widetilde{\d}^l$ for $l\geq -1$ are induced by the differentials $\d^l$ in Definition \ref{defproj}.

We observe the inclusions $\widetilde{\d}^l(\bigoplus_{i\in Q_{0}}P_{i}^{(\La^{l, n}_{i})})\subseteq\bigoplus_{i\in Q_{0}}P_{i}^{(\La^{l+1, n-1}_{i})}$ inside the complex $\C^{\bu}$ for each $l\in\Z$ and $n\geq 2$. Then for each $n\geq 0$ the following complex, denoted by $\C^{\bu}_n$, $$\cdots\stackrel{\d^{n-4}}{\longrightarrow}
\bigoplus_{i\in Q_{0}}P_{i}^{(\La^{n-3, 3}_{i})}\stackrel{\d^{n-3}}{\longrightarrow}\bigoplus_{i\in Q_{0}}P_{i}^{(\La^{n-2, 2}_{i})}\stackrel{\d^{n-2}}{\longrightarrow}
\bigoplus_{i\in Q_{0}}P_{i}^{(\La^{n-1, 1}_{i})}
\rightarrow 0$$ is a subcomplex of $\C^{\bu}$ satisfying $\C^l_n=0$ for $l\geq n$. The differential $\d^{l}$ for
$l\leq n-2$ is the differential of $\P^{\bullet}$.

We visually represent the projective Leavitt complex $\P^{\bu}$ and the cokernel complex $\C^{\bu}$ of $\phi^{\bu}$. For each $l\in\Z$ and $n\geq 0$, we denote $\bigoplus_{i\in Q_{0}}P_{i}^{(\La^{l, n}_{i})}$ by $P^{(\La^{l, n})}$ for simplicity.

\bigskip

\[\resizebox{1.\hsize}{!}{$
$$~~~~~~~~~~~~~~~~~~~~~~~\setlength{\unitlength}{1mm}
\begin{picture}(150,90)

\put(0,81){$\ddots$}
\put(20,81){$\ddots$}
\put(40,81){$\ddots$}
\put(60,81){$\ddots$}
\put(80,81){$\ddots$}

\put(20,60){$P^{(\La^{-2, 2})}$}
\put(40,60){$P^{(\La^{-1, 2})}$} 
\put(60,60){$P^{(\La^{0, 2})}$}
\put(80,60){$P^{(\La^{1, 2})}$}
\put(100,60){$P^{(\La^{2, 2})}$}

\put(6,79){\vector (1,-1){15}}
\put(26,79){\vector (1,-1){15}}
\put(46,79){\vector (1,-1){15}}
\put(66,79){\vector (1,-1){15}}
\put(86,79){\vector (1,-1){15}}

\put(26,59){\vector (1,-1){15}}
\put(46,59){\vector (1,-1){15}}
\put(66,59){\vector (1,-1){15}}
\put(86,59){\vector (1,-1){15}}
\put(106,59){\vector (1,-1){15}}

\put(40,40){$P^{(\La^{-1, 1})}$}
\put(60,40){$P^{(\La^{0, 1})}$}
\put(80,40){$P^{(\La^{1, 1})}$}
\put(100,40){$P^{(\La^{2, 1})}$}
\put(120,40){$P^{(\La^{3, 1})}$}  

\put(46,39){\vector (1,-1){15}}
\put(66,39){\vector (1,-1){15}}
\put(86,39){\vector (1,-1){15}}
\put(106,39){\vector (1,-1){15}}
\put(126,39){\vector (1,-1){15}}

\put(60,20){$P^{(\La^{0, 0})}$}
\put(80,20){$P^{(\La^{1, 0})}$}
\put(100,20){$P^{(\La^{2, 0})}$} 
\put(120,20){$P^{(\La^{3, 0})}$}
\put(142,21){$\cdots$}

\put(72,22){\vector (1,0){8}}
\put(92,22){\vector (1,0){8}}
\put(112,22){\vector (1,0){8}}
\put(132,22){\vector (1,0){8}}

\put(142,10){$\cdots$}
\put(120,10){$3$}
\put(100, 10){$2$}
\put(80, 10){$1$}
\put(60, 10){$0$}
\put(40, 10){$-1$}
\put(20, 10){$-2$}
\put(0, 10){$\cdots$}
\end{picture}$$ $}\]

\bigskip

\begin{rmk} 
\begin{enumerate}
\item[(1)] For each $l\in\Z$, the $l$-th component of the projective Leavitt complex $\P^{\bu}$ is the coproduct of the objects in the $l$-th column of the above diagram. The differentials of $\P^{\bu}$ are coproducts of the maps in the diagram.

\item[(2)] The horizontal line of the above diagram is the subcomplex $\CK^{\bu}$, while the other part gives  the cokernel $\C^{\bu}$ of $\phi^{\bu}:\CK^{\bu}\xra \P^{\bu}$. The diagonal lines (not including the intersection with the horizontal line) of the above diagram are the subcomplexes $\C^{\bu}_n$ of $\C^{\bu}$. For example, the first diagonal line on the left of the above diagram (not including $P^{(\La^{0,0})}$) is the subcomplex $\C_0^{\bu}$. 
\end{enumerate}
\end{rmk}

We have the following observation immeadiately.

\begin{prop} \label{cokcop} The complex $\C^{\bu}=\bigoplus_{n\geq 0}\C^{\bu}_n$.
\end{prop}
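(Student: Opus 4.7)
The plan is to verify the claimed direct sum decomposition first on underlying graded $A$-modules and then on differentials; both reductions are ultimately bookkeeping around the partition $\La^l_i = \bigsqcup_{n\geq 0} \La^{l,n}_i$.

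First, I would fix $l\in\Z$ and compute the $l$-th component of $\bigoplus_{n\geq 0}\C^{\bu}_n$. By definition $(\C^{\bu}_n)^l = \bigoplus_{i\in Q_0} P_i^{(\La^{l, n-l}_i)}$ when $l\leq n-1$, and $0$ when $l\geq n$; so reindexing by $m=n-l$ gives
\[
\bigoplus_{n\geq 0}(\C^{\bu}_n)^l \;=\; \bigoplus_{n\geq \max(0,\,l+1)}\bigoplus_{i\in Q_0} P_i^{(\La^{l,n-l}_i)} \;=\; \bigoplus_{m\geq 1}\bigoplus_{i\in Q_0} P_i^{(\La^{l,m}_i)}.
\]
Here for $l<0$ the sets $\La^{l,m}_i$ are automatically empty for $m<-l$, so the lower bound $m\geq 1$ is harmless. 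Since $\La^{l,+}_i = \bigsqcup_{m\geq 1}\La^{l,m}_i$, the right-hand side equals $\bigoplus_{i\in Q_0} P_i^{(\La^{l,+}_i)} = \C^l$, which gives the desired identification on components.

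Next I would check that under this identification the differential $\widetilde{\d}^l$ of $\C^{\bu}$ decomposes as the coproduct of the differentials of the $\C^{\bu}_n$. On the generator $\aa\z_{(p,q)}$ of $P_i^{(\La^{l,m}_i)}$ with $s(\aa)=i$, both differentials vanish. On $e_i\z_{(p,q)}$ with $(p,q)\in \La^{l,m}_i$, if $m=1$ (that is $l(p)=1$) the definition of $\widetilde{\d}^l$ forces $\widetilde{\d}^l(e_i\z_{(p,q)})=0$, which matches the fact that $P_i^{(\La^{l,1}_i)}$ sits as the last nonzero term of $\C^{\bu}_{l+1}$. If $m\geq 2$, then $\widetilde{\d}^l(e_i\z_{(p,q)}) = \d^l(e_i\z_{(p,q)}) = \b\z_{(\widehat{p},q)}$ where $p = \b\widehat{p}$; this element lies in $P_{s(\b)}^{(\La^{l+1,m-1}_{s(\b)})}$, i.e.\ in the summand indexed by the same $n=l+m$. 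Thus each $\widetilde{\d}^l$ preserves the direct-sum decomposition term by term, and agrees summand-wise with the differential of $\C^{\bu}_n$.

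Combining the two steps gives $\C^{\bu} = \bigoplus_{n\geq 0}\C^{\bu}_n$ as complexes of $A$-modules. There is no genuine obstacle here: the only subtlety is making sure that the ``cokernel'' differential $\widetilde{\d}^l$—which kills the length-one tail but otherwise agrees with $\d^l$—is consistent with the fact that in each $\C^{\bu}_n$ the corresponding tail $P^{(\La^{n-1,1})}$ is the terminal nonzero component, after which the differential is defined to be $0$. This is precisely why the differentials of $\C^{\bu}$ and of the $\C^{\bu}_n$ patch together into a single direct-sum decomposition.
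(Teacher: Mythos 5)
Your proof is correct and follows essentially the same route as the paper's: identify the $l$-th component of $\bigoplus_{n\geq 0}\C^{\bu}_n$ with $\C^l$ via the partition $\La^{l,+}_i=\bigcup_{m\geq 1}\La^{l,m}_i$, then observe that $\widetilde{\d}^l$ vanishes on the $\La^{l,1}$-part and agrees with $\d^l$ on the $\La^{l,m}$-parts for $m\geq 2$, so it is the coproduct of the differentials of the $\C^{\bu}_n$. Your version merely spells out the reindexing $m=n-l$ and the summand-preservation check in more detail than the paper does.
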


\begin{proof} Observe that for each $n\geq 0$, the $l$-th component of $\C^{\bu}_n$ is
\begin{equation*}\C_n^l=\begin{cases}
\bigoplus_{i\in Q_0}P_i^{(\La^{l, n-l}_i)}, &\text{if $l<n$;}\\
0,& \text{otherwise}.
\end{cases}
\end{equation*} Then the $l$-th component of $\bigoplus_{n\geq 0}\C^{\bu}_n$ is
$\bigoplus_{n\geq 0}\C_n^l=
\bigoplus_{i\in Q_0}P_i^{(\La^{l,+}_i)}=\C^l$. Recall the differential $\widetilde{\d}^l:\C^l\xra \C^{l+1}$ of $\C^{\bullet}$. The restriction of $\widetilde{\d}^l$ to  $\bigoplus_{i\in Q_0}P_i^{(\La^{l,1}_i)}$ is zero and  the restriction of $\widetilde{\d}^l$ to $\bigoplus_{i\in Q_0}P_i^{(\La^{l,n}_i)}$ for $n>1$ is $\d^l$. Thus $\widetilde{\d}^l:\C^l\xra \C^{l+1}$ is the coproduct of the differentials $\d^l:\C^l_n\xra\C^{l+1}_n$ for $n\geq 0$. The proof is completed.
\end{proof}

\subsection{An explicit compact generator of the homotopy category} We denote by $A$-Mod the category of left $A$-modules. Denote by $\K(A\-\Mod)$ the homotopy category of $A$-Mod. We will always view a module as a stalk complex concentrated in degree zero.

For $X^{\bullet}=(X^{n}, d^n_{X})_{n\in \Z}$ a complex of $A$-modules, we denote by $X^{\bullet}[1]$ the complex given by $(X^{\bullet}[1])^{n}=X^{n+1}$ and
$d^n_{X[1]}=-d^{n+1}_{X}$ for $n\in\Z$.
For a chain map $f^{\bullet}: X^{\bullet}\xrightarrow[]{} Y^{\bullet}$ , its \emph{mapping cone} ${\rm Con}(f^{\bullet})$ is
a complex such that ${\rm Con}(f^{\bullet})=X^{\bullet}[1]\oplus Y^{\bullet}$ with the differential
$d^n_{{\rm Con}(f^{\bullet})}=\begin{pmatrix}-d^{n+1}_{X}&0\\f^{n+1}&d^n_{Y}\end{pmatrix}.$
Each triangle in $\K(A\-\Mod)$ is isomorphic to $$\CD
  X^{\bullet} @>f^{\bullet}>> Y^{\bullet}@>{\begin{pmatrix}0\\1\end{pmatrix}} >> {\rm Con}(f^{\bullet}) @>{\begin{pmatrix}1&0\end{pmatrix}}>> X^{\bullet}[1]
\endCD$$ for some chain map $f^{\bullet}$.

Denote by $I_{i}=D(e_{i}A_A)$ the injective left $A$-module for each vertex $i\in Q_{0}$, where $(e_{i}A)_{A}$ is the indecomposable projective right $A$-module and $D={\rm Hom}_{k}(-, k)$ denotes
the standard $k$-duality.
Denote by $\{e_{i}^{\s}\}\cup\{ \alpha^{\s}\;| \;\alpha\in Q_{1}, t(\alpha)=i\}$ the basis of
$I_i$, which is dual to the basis $\{e_{i}\}\cup\{
\alpha\;|\;\alpha\in Q_{1}, t(\alpha)=i\}$ of $e_iA$.

We denote by $\M^{\bullet}$ the following complex
$$
0\rightarrow\bigoplus_{i\in Q_{0}}I_{i}^{(\La^{0, 0}_{i})}
\stackrel{d^{0}}{\longrightarrow} \bigoplus_{i\in Q_{0}}I_{i}^{(\La^{1, 0}_{i})}
\stackrel{}{\longrightarrow}\cdots
\stackrel{}{\longrightarrow} \bigoplus_{i\in Q_{0}}I_{i}^{(\La^{l, 0}_{i})}
\stackrel{d^{l}}{\longrightarrow} \bigoplus_{i\in Q_{0}}I_{i}^{(\La^{l+1, 0}_{i})}
\stackrel{}{\longrightarrow} \cdots
$$ of $A$-modules satisfying $\M^l=0$ for $l<0$,  where the differential $d^{l}$ for
$l\geq 0$ is given by $d^{l}(e_{i}^{\s}\zeta_{(e_i, q)})=0$
and $d^{l}(\alpha^{\s}\zeta_{(e_i, q)})=e_{s(\alpha)}^{\s}\zeta_{(e_{s(\aa)}, q\alpha)}$ for
$i\in Q_{0}$, $(e_{i}, q)\in \La^{l, 0}_{i}$ and $\alpha\in Q_{1}$ with $t(\alpha)=i$. Consider the semisimple left $A$-module $kQ_0=A/\rad A$.

\begin{lem} \label{injres} The left $A$-module $kQ_0=A/\rad A$ is quasi-isomorphic to the complex $\M^{\bullet}$. In other words, $\M^{\bu}$ is an injective resolution of the $A$-module $kQ_0$.
\end{lem}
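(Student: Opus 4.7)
The plan is to exhibit an explicit augmentation $\epsilon\colon kQ_0\longrightarrow \M^0$ and then show by a direct basis count that the augmented complex
\[
0\longrightarrow kQ_0\stackrel{\epsilon}{\longrightarrow}\M^0\stackrel{d^0}{\longrightarrow}\M^1\stackrel{d^1}{\longrightarrow}\cdots
\]
is exact. First I would define $\epsilon(e_i)=e_i^{\s}\z_{(e_i, e_i)}$ for each $i\in Q_0$. To see this is $A$-linear, note that $\rad A$ annihilates $kQ_0$, so one only needs $\aa\cdot e_j^{\s}\z_{(e_j, e_j)}=0$ for every arrow $\aa$; a direct computation using the $A$-action on $I_j=D(e_jA)$ gives $\aa\cdot e_j^{\s}=0$, and similarly $e_i\cdot e_j^{\s}=\d_{ij}e_j^{\s}$. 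The relation $d^0\circ\epsilon=0$ is immediate from the formula $d^0(e_j^{\s}\z_{(e_j, e_j)})=0$.

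Next I would carry out the cohomology computation using the natural $k$-basis
\[
\widetilde{B}^l=\bigl\{e_i^{\s}\z_{(e_i, q)},\;\aa^{\s}\z_{(e_i, q)}\;\big|\;i\in Q_0,\;(e_i, q)\in\La^{l, 0}_i,\;\aa\in Q_1,\;t(\aa)=i\bigr\}
\]
of $\M^l$. The differential $d^l$ maps $\aa^{\s}\z_{(e_i, q)}\mapsto e_{s(\aa)}^{\s}\z_{(e_{s(\aa)}, q\aa)}$ and kills the elements of the form $e_i^{\s}\z_{(e_i, q)}$. The key observation is that the assignment $(\aa, (e_i, q))\mapsto (e_{s(\aa)}, q\aa)$ sets up a bijection between those basis elements of $\widetilde{B}^l$ on which $d^l$ is nonzero and the subset $\widetilde{B}^{l+1}_{\mathrm{ker}}:=\{e_j^{\s}\z_{(e_j, p)}\mid j\in Q_0,\,p\in Q_{l+1},\,s(p)=j\}$ of $\widetilde{B}^{l+1}$: indeed, any path $p$ of length $l+1$ starting at $j$ factors uniquely as $q\aa$ with $\aa$ its first arrow, $s(\aa)=j$ and $q\in Q_l$ starting at $t(\aa)$. (This is essentially the analogue of the Condition (W) argument used in Lemma \ref{kerimproj1}, but with the bijection so transparent that I would just state it directly.)

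From this bijection I read off that for every $l\geq 0$, $\Ke d^l$ has $k$-basis $\{e_i^{\s}\z_{(e_i, q)}\mid i\in Q_0,\,(e_i, q)\in\La^{l, 0}_i\}$, and for every $l\geq 1$, $\Im d^{l-1}$ has the same $k$-basis. Hence $H^l(\M^{\bu})=0$ for $l\geq 1$. For $H^0$, the kernel of $d^0$ has basis $\{e_i^{\s}\z_{(e_i, e_i)}\mid i\in Q_0\}$, which is precisely the image of the injection $\epsilon$; this gives $H^0(\M^{\bu})\cong kQ_0$ as $A$-modules. Since each $I_i$ is injective (as $A$ is finite-dimensional, hence Noetherian, direct sums of injectives remain injective), $\M^{\bu}$ is in fact an injective resolution of $kQ_0$, proving the lemma.

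There is no genuine obstacle: the argument is pure bookkeeping. The only point requiring care is the bijection between the basis of $\Im d^{l-1}$ and $\Ke d^l$, which depends on the unique factorization of a path $p\in Q_{l+1}$ as $q\aa$ with $\aa=\aa_1$ its first arrow, and on noticing that the associated pair conditions impose no further constraints in the layer $\La^{l, 0}_i$ since $p=e_i$ is trivial.
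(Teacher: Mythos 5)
Your proof is correct and follows essentially the same route as the paper: the same augmentation $e_i\mapsto e_i^{\s}\zeta_{(e_i,e_i)}$, the same $k$-basis of $\M^l$, and the same key point that $d^l$ kills the $e_i^{\s}$-elements and maps the $\aa^{\s}$-elements bijectively onto the degree-$(l+1)$ kernel basis via the unique factorization $q\mapsto(\widetilde{q},\aa)$ by the first arrow. The only cosmetic difference is that you state this bijection directly, whereas the paper packages it through Condition (W) and Lemma \ref{kerimproj1}.
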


\begin{proof} Define a left $A$-module map $f^0: kQ_0\longrightarrow \M^0$ such that
$f^0(e_i)=e_i^{\s}\zeta_{(e_i, e_i)}$ for each $i\in Q_{0}$. Then we obtain a chain map $f^{\bullet}=(f^l)_{l\in\Z}: kQ_0
\longrightarrow \M^{\bullet}$ such that $f^l=0$ for $l\neq 0$. We observe the following $k$-basis of $\M^l$ for $l\geq 0$: $$\G^l=\{e_i^{\s}\zeta_{(e_i, q)}, \aa^{\s}\z_{(e_i, q)}\;|\;i\in Q_0, (e_i, q)\in \La^{l, 0}_i \text{~and~} \aa\in Q_1 \text{~with~} t(\aa)=i\}.$$
Set $\G^l_0=\{e_i^{\s}\zeta_{(e_i, q)}\;|\;i\in Q_0, (e_i, q)\in \La^{l, 0}_i\}$, $\G^l_1=\G^l\setminus \G^l_0$, and $\G'^l_1=\G^l$. We have the disjoint union $\G^l=\G^l_0\cup \G^l_1$.
The triple $(d^{l}, \G^l, \G^{l+1})$ satisfies Condition (W).
By Lemma $\ref{kerimproj1}$ the set $\G^l_0$ is a $k$-basis of $\Ke d^{l}$ and the
set $d^{l}(\G^l_1)$
is a $k$-basis of $\Im d^{l}$.
For each $l\geq 0$, $i\in Q_0$ and $(e_i, q)\in \La^{l+1, 0}_i$, write $q=\widetilde{q}\aa$ with $\aa\in Q_1$.
Then we have $e_i^{\s}\zeta_{(e_i, q)}=d^l(
\aa^{\s}\zeta_{(e_{t(\aa)}, \widetilde{q})}$.
Thus $d^{l}(\G^l_1)=\G^{l+1}_0$.
Hence $\Im d^{l}=\Ke d^{l+1}$ for each $l\geq 0$
and $\Ke d^0\cong kQ_0$. The statement follows directly. \end{proof}

For a triangulated category $\mathcal{A}$, a \emph{thick} subcategory of $\mathcal{A}$
is a triangulated subcategory of $\mathcal{A}$ that is closed under direct summands. Let $\mathcal{S}$ be a class of objects in $\mathcal{A}$. Denote by thick$\langle\mathcal{S}\rangle$ the smallest thick subcategory of $\mathcal{A}$
containing $\mathcal{S}$.
If the triangulated category $\mathcal{A}$ has arbitrary coproducts, we denote by
${\rm Loc}\langle\mathcal{S}\rangle$ the smallest triangulated subcategory of
$\mathcal{A}$ which contains $\mathcal{S}$ and is closed under arbitrary coproducts.
By \cite[Proposition 3.2]{bbn}, thick$\langle\mathcal{S}\rangle\subseteq{\rm Loc}\langle \mathcal{S}\rangle$.

For a triangulated category $\mathcal{A}$ with arbitrary coproducts, we say that an object $M\in\mathcal{A}$ is \emph{compact} if the functor ${\rm Hom}_{\mathcal{A}}(M,-)$ commutes with arbitrary coproducts.
Denote by $\mathcal{A}^{c}$ the full subcategory consisting of compact objects; it is a thick subcategory.

A triangulated category $\mathcal{A}$ with arbitrary coproducts is \emph{compactly generated} \cite{bke, bn1} if there exists a set $\mathcal{S}$ of compact objects such that
any nonzero object $X$ satisfies that ${\rm Hom}_{\mathcal{A}}(S,X[n])\neq 0$
for some $S\in\mathcal{S}$ and $n\in\Z$.  This  is equivalent to the condition that
$\mathcal{A}={\rm Loc}\langle\mathcal{S}\rangle$, in which case $\mathcal{A}^{c}$=thick$\langle\mathcal{S}\rangle$; see \cite[Lemma 3.2]{bn1}. If the above set $\mathcal{S}$
consists of a single object $S$, we call $S$ a \emph{compact generator} of the triangulated category $\mathcal{A}$.

The following is \cite[Lemma 3.9]{bli}.

\begin{lem} \label{comgproj}Suppose that $\mathcal{A}$ is a compactly generated triangulated category
with a compact generator $X$. Let $\mathcal{A}'\subseteq \mathcal{A}$ be a triangulated
subcategory which is closed under arbitrary coproducts. Suppose that there exists a triangle
\begin{align*}\CD
 X @>>>Y@>>>Z @>>>X[1]
\endCD
\end{align*} such that $Y\in \mathcal{A}'$ and $Z$ satisfies $\Hom_{\mathcal{A}}(Z, A')=0$
for each object $A'\in\mathcal{A}'$. Then $Y$ is a compact generator of $\mathcal{A}'$.\hfill $\square$
\end{lem}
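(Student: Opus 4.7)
The plan is to show two things about $Y$: it is compact when viewed inside $\mathcal{A}'$, and it generates $\mathcal{A}'$ in the sense that $Y^{\perp}\cap\mathcal{A}'=0$. Both will be reduced to the analogous properties of $X$ in $\mathcal{A}$ by means of the given triangle $X\to Y\to Z\to X[1]$.

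First, I would rotate the triangle to obtain, for any $A'\in\mathcal{A}'$, a long exact sequence
\[
\Hom_{\mathcal{A}}(Z,A')\to \Hom_{\mathcal{A}}(Y,A')\to \Hom_{\mathcal{A}}(X,A')\to \Hom_{\mathcal{A}}(Z,A'[1]).
\]
Since $\mathcal{A}'$ is a triangulated subcategory, both $A'$ and $A'[1]$ lie in $\mathcal{A}'$, so by hypothesis the two outer terms vanish. Hence the middle map is a natural isomorphism $\Hom_{\mathcal{A}}(Y,A')\cong\Hom_{\mathcal{A}}(X,A')$ for every $A'\in\mathcal{A}'$. This is the key identity driving the rest of the argument.

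Next, for compactness of $Y$ in $\mathcal{A}'$: given a family $\{A'_i\}\subseteq\mathcal{A}'$, the coproduct $\coprod A'_i$ is taken inside $\mathcal{A}$ and, by the hypothesis that $\mathcal{A}'$ is closed under coproducts, also lies in $\mathcal{A}'$. Applying the natural isomorphism to this coproduct and then using compactness of $X$ in $\mathcal{A}$, I get
\[
\Hom_{\mathcal{A}}\bigl(Y,\textstyle\coprod A'_i\bigr)\cong \Hom_{\mathcal{A}}\bigl(X,\textstyle\coprod A'_i\bigr)\cong \textstyle\coprod\Hom_{\mathcal{A}}(X,A'_i)\cong \textstyle\coprod\Hom_{\mathcal{A}}(Y,A'_i),
\]
and the composite is the canonical map, proving $Y\in(\mathcal{A}')^{c}$.

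For generation, I would check that $\mathcal{A}'=\mathrm{Loc}\langle Y\rangle$ via the standard perpendicular criterion: suppose $A'\in\mathcal{A}'$ satisfies $\Hom_{\mathcal{A}}(Y,A'[n])=0$ for all $n\in\Z$. Applying the natural isomorphism (valid for every shift $A'[n]\in\mathcal{A}'$) yields $\Hom_{\mathcal{A}}(X,A'[n])=0$ for all $n$, and since $X$ is a compact generator of $\mathcal{A}$ this forces $A'=0$. The only subtle step is the first one, ensuring the outer terms in the long exact sequence really vanish; this is immediate from $\mathcal{A}'$ being triangulated (closed under shifts), so I do not expect any real obstacle here — the lemma is essentially a formal consequence of the vanishing hypothesis on $Z$ combined with the compact generation of $\mathcal{A}$ by $X$.
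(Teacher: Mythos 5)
Your proof is correct, and it is the standard argument for this lemma: the paper itself gives no proof here (it quotes the result from \cite[Lemma 3.9]{bli}), and the cited proof runs exactly along your lines. Namely, the long exact sequence obtained by applying $\Hom_{\mathcal{A}}(-,A')$ to the triangle, together with the vanishing of $\Hom_{\mathcal{A}}(Z,A')$ and $\Hom_{\mathcal{A}}(Z,A'[1])$ for $A'\in\mathcal{A}'$, yields the natural isomorphism $\Hom_{\mathcal{A}}(Y,A')\cong\Hom_{\mathcal{A}}(X,A')$, from which compactness of $Y$ in $\mathcal{A}'$ and the generation property (via the Hom-vanishing criterion and the fact that $X$ compactly generates $\mathcal{A}$) follow just as you argue.
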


Let $A\-\Inj$ and $A\-\Proj$ be the categories of injective and projective $A$-modules, respectively. Denote by $\K(A\-\Inj)$ and $\K(A\-\Proj)$ the homotopy categories
of complexes of injective and projecitve $A$-modules, respectively. These homotopy categories are triangulated subcategories of $\K(A\-\Mod)$ which are closed under coproducts. By \cite[Proposition 2.3(1)]{bkr} $\K(A\-\Inj)$ is a compactly generated triangulated category.

Recall that the Nakayama functor $\nu=DA\otimes_A-:A\-\Proj\lra A\-\Inj$ is an equivalence, whose quasi-inverse
$\nu^{-1}={\rm Hom}_A(D(A_A), -)$. Thus we have a triangle equivalence $\K(A\-\Inj)\xrightarrow[]{\sim}\K(A\-\Proj)$. The category $\K(A\-\Proj)$ is a compactly generated triangulated category; see \cite[Theorem 2.4]{bj} and \cite[Proposition 7.14]{bn3}.

\begin{lem}\label{comgenproj} The complex $\CK^{\bu}$ is a compact generator of $\K(A\-\Proj)$.
\end{lem}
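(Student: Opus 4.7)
The plan is to exploit the Nakayama equivalence $\nu=DA\otimes_A-:\K(A\-\Proj)\xrightarrow{\sim}\K(A\-\Inj)$ and reduce the statement to the analogous fact that the complex $\M^{\bu}$ constructed just above is a compact generator of $\K(A\-\Inj)$. The bridge is the identification $\nu(\CK^{\bu})\cong\M^{\bu}$ in $\K(A\-\Inj)$.

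First I would verify this identification. On components it is immediate, since $\nu(P_i)=I_i$ and both complexes have $l$-th component indexed by $\La^{l,0}_i$. For the differentials I would use the natural identifications $\Hom_A(P_i,P_j)\cong e_iAe_j\cong\Hom_A(I_i,I_j)$ induced by $\nu$ and the $k$-duality, together with the explicit left $A$-action on the dual basis $\{e_i^{\s},\alpha^{\s}\}$ of $I_i$ (in particular $\alpha\cdot\alpha^{\s}=e_{t(\alpha)}^{\s}$ and $\gamma\cdot\alpha^{\s}=0$ for $\gamma\neq\alpha$), and check that the basic component of $\delta^l$, namely the map $P_i\to P_{s(\beta)}$ sending $e_i\mapsto\beta$ (corresponding to $\beta\in e_iAe_{s(\beta)}$), is transported by $\nu$ to the map $I_i\to I_{s(\beta)}$ sending $\beta^{\s}\mapsto e_{s(\beta)}^{\s}$ and annihilating the remaining dual basis vectors. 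Summing this over all $(e_i,q)\in\La^{l,0}_i$ and arrows $\beta$ with $t(\beta)=i$ recovers exactly the differential $d^l$ of $\M^{\bu}$, giving $\nu(\CK^{\bu})\cong\M^{\bu}$.

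Next I would argue that $\M^{\bu}$ is a compact generator of $\K(A\-\Inj)$. By Lemma \ref{injres}, $\M^{\bu}$ is an injective resolution of $kQ_0=A/\rad A$. By \cite[Proposition 2.3]{bkr}, $\K(A\-\Inj)$ is compactly generated and the composition $\D^b(A\-\mod)\to\K(A\-\Inj)$ given by taking injective resolutions is fully faithful with essential image $\K(A\-\Inj)^c$; in particular $\M^{\bu}$ is compact and corresponds to the module $A/\rad A$. Since $A/\rad A=\bigoplus_{i\in Q_0}S_i$ contains every simple $A$-module as a direct summand, and every object of $\D^b(A\-\mod)$ admits a finite filtration with simple cohomological subquotients, we have $\mathrm{thick}\langle A/\rad A\rangle=\D^b(A\-\mod)$; hence $\mathrm{thick}\langle\M^{\bu}\rangle=\K(A\-\Inj)^c$, and therefore $\M^{\bu}$ is a compact generator of $\K(A\-\Inj)$. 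Applying the triangle equivalence $\nu^{-1}$ finally transports this property to $\CK^{\bu}$.

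The main obstacle is the differential-matching step. It involves chasing the natural isomorphisms $\Hom_A(P_i,P_j)\cong e_iAe_j\cong\Hom_A(I_i,I_j)$ carefully, and verifying via the $A$-action on $I_i$ stated above that the element $\beta\in e_iAe_{s(\beta)}$ acts on $I_i$ as the claimed map $\beta^{\s}\mapsto e_{s(\beta)}^{\s}$. Once this bookkeeping is in place, the rest reduces to Krause's theorem for $\K(A\-\Inj)$ and the standard fact that the sum of all simples generates $\D^b(A\-\mod)$ for a finite dimensional algebra.
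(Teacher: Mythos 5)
Your proposal is correct and follows essentially the same route as the paper: identify $\CK^{\bu}$ with the image of $\M^{\bu}$ under the Nakayama equivalence between $\K(A\-\Proj)$ and $\K(A\-\Inj)$ (the paper does this via an explicit isomorphism $\CK^{\bu}\cong(\nu^{-1}(\M^{l}),\nu^{-1}(d^l))$, which is the same bookkeeping you describe with dual bases), then invoke Lemma \ref{injres} and \cite[Proposition 2.3]{bkr} to conclude that $\M^{\bu}$ is a compact generator of $\K(A\-\Inj)$, and transport across the equivalence. Your only addition is to spell out why ${\rm Loc}\langle\M^{\bu}\rangle=\K(A\-\Inj)$, via ${\rm thick}\langle A/\rad A\rangle=\mathbf{D}^{b}(A\-\mod)$, a standard point the paper leaves implicit in its citation of Krause.
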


\begin{proof} We first prove that $\CK^{\bu}\cong(\nu^{-1}(\M^{i}), \nu^{-1}(d^i))_{i\in \Z}$ as complexes. Recall that $$\nu^{-1}(I_i)=\Hom_A(D(A_A), I_i)\cong Ae_i$$ for each $i\in Q_0$. Observe that all the sets $\La^{l,0}_i$ are finite. Then for each $l\geq 0$ we have the isomorphism of $A$-modules $$f^l:\bigoplus_{i\in Q_0} P_i^{(\La^{l,0}_i)}\stackrel{\sim}\longrightarrow\Hom_A(D(A_A), \bigoplus_{i\in Q_0}I_i^{(\La^{l, 0}_i)})$$ such that $f^l(e_i\z_{(e_i, q)})(e_j^{\s})=\d_{ij}e_j^{\s}\z_{(e_i, q)}$ and $f^l(e_i\z_{(e_i, q)})(\b^{\s})=\d_{i, t(\b)}\b^{\s}\z_{(e_i, q)}$ for each $i, j\in Q_0$, $(e_i, q)\in\La^{l,0}_i$ and $\b\in Q_1$. We have $\Hom_A(D(A_A), d^l)\circ f^l=f^{l+1}\circ \d^l$ by direct calculation for each $l\geq 0$.

Recall from Lemma \ref{injres} that $\M^{\bu}$ is an injective resolution of the left $A$-module $kQ_0$. It follows from \cite[Proposition 2.3]{bkr} that
$\M^{\bu}$ is a compact
object in $\K(A\-\Inj)$ and ${\rm Loc}\langle\mathcal{\M^{\bu}}\rangle=\K(A\-\Inj)$.
Since $\K(A\-\Inj)\xrightarrow[]{\sim}\K(A\-\Proj)$ is a triangle equivalence which sends $\M^{\bu}$ to $\CK^{\bu}$,
we have ${\rm Loc}\langle\mathcal{\CK^{\bu}}\rangle=\K(A\-\Proj)$.
\end{proof}

\begin{lem} \label{lemmazero}Suppose that $\mathrm{X}^{\bu}\in \mathbf{K}(A\-\Proj)$ is a bounded-above complex.
Then we have $${\rm Hom}_{\mathbf{K}(A\-\Mod)}(\mathrm{X}^{\bullet}, Y^{\bullet})=0$$ for any acyclic complex $Y^{\bullet}$
of $A$-modules.
\end{lem}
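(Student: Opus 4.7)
The plan is to construct a contracting homotopy for an arbitrary chain map $f^{\bu}: \mathrm{X}^{\bu}\lra Y^{\bu}$ by descending induction on the degree, exploiting the boundedness of $\mathrm{X}^{\bu}$ from above, the acyclicity of $Y^{\bu}$, and the projectivity of each $\mathrm{X}^n$.

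First I would fix the setup: choose $N\in\Z$ with $\mathrm{X}^n=0$ for all $n>N$, and let $f^{\bu}:\mathrm{X}^{\bu}\lra Y^{\bu}$ be any chain map. The aim is to produce $A$-module maps $s^n:\mathrm{X}^n\lra Y^{n-1}$ with $f^n=d^{n-1}_Y\circ s^n+s^{n+1}\circ d^n_{\mathrm{X}}$ for every $n\in\Z$. Set $s^n=0$ for $n>N$, which vacuously satisfies the homotopy identity in those degrees.

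Next I would run the descending induction. For the base step $n=N$, the condition $d^N_Y\circ f^N=f^{N+1}\circ d^N_{\mathrm{X}}=0$ shows that $f^N$ factors through $\Ke d^N_Y=\Im d^{N-1}_Y$ by acyclicity; projectivity of $\mathrm{X}^N$ lets me lift $f^N$ along the surjection $Y^{N-1}\twoheadrightarrow \Im d^{N-1}_Y$, producing $s^N$. For the inductive step, assume $s^{n+1},s^{n+2},\ldots$ have been constructed with the homotopy identity holding in degrees $\geq n+1$. Setting $g^n=f^n-s^{n+1}\circ d^n_{\mathrm{X}}$, a short computation gives
\[
d^n_Y\circ g^n=d^n_Y\circ f^n-d^n_Y\circ s^{n+1}\circ d^n_{\mathrm{X}}=f^{n+1}\circ d^n_{\mathrm{X}}-(f^{n+1}-s^{n+2}\circ d^{n+1}_{\mathrm{X}})\circ d^n_{\mathrm{X}}=0,
\]
using the homotopy identity one step up and $d^{n+1}_{\mathrm{X}}\circ d^n_{\mathrm{X}}=0$. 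Hence $g^n$ takes values in $\Ke d^n_Y=\Im d^{n-1}_Y$, and again projectivity of $\mathrm{X}^n$ supplies $s^n:\mathrm{X}^n\lra Y^{n-1}$ with $d^{n-1}_Y\circ s^n=g^n$, which is the required identity in degree $n$.

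I do not expect any serious obstacle: the proof is a standard lifting argument, and the only non-formal inputs are acyclicity of $Y^{\bu}$ (which turns cocycles into coboundaries) and projectivity of each $\mathrm{X}^n$ (which turns coboundaries into honest lifts). The bounded-above hypothesis is used precisely to start the descending induction at the top with $s^{N+1}=0$; without it, the induction would have no base case, which is the only place the argument could fail.
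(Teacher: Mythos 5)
Your proof is correct and is exactly what the paper intends: the paper's proof is the one-line instruction ``Directly check that any chain map $f^{\bu}:\mathrm{X}^{\bullet}\lra Y^{\bu}$ is null-homotopic,'' and your descending-induction lifting argument (using projectivity of each $\mathrm{X}^n$, acyclicity of $Y^{\bu}$, and the bounded-above hypothesis to start the induction) is the standard way to carry out that check.
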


\begin{proof} Directly check that any chain map $f^{\bu}:\mathrm{X}^{\bullet}\lra Y^{\bu}$ is null-homotopic.
\end{proof}

Denote by ${\mathbf{K}_{\rm ac}}(A\-\Proj)$ the full subcategory of $\mathbf{K}(A\-\Mod)$ which is
formed by acyclic complexes of projective $A$-modules. Applying \cite[Propositions 7.14 and 7.12]{bn3}
and the localization theorem in \cite[1.5]{bke2}, we have that
the category is a compactly generated triangulated category
with the triangle equivalence
$${\rm \mathbf{D}_{sg}}(A^{\rm op})^{\rm op}\stackrel{\sim}\longrightarrow{\mathbf{K}_{\rm ac}}(A\-\Proj)^c.$$
Here, for a category $\mathcal{C}$, we denote by $\mathcal{C}^{\rm op}$ its opposite category; the category ${\rm \mathbf{D}_{sg}}(A^{\rm op})$ is the singularity category of algebra $A^{\rm op}$ in the sense of \cite{bbu, bo}.

\begin{thm}\label{tcproj} Let $Q$ be a finite quiver without sources. Then the projective Leavitt complex $\mathcal{P}^{\bullet}$ of $Q$
is a compact generator of the homotopy category ${\rm \mathbf{K}_{ac}}(A\-\Proj)$.
\end{thm}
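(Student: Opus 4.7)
The plan is to invoke Lemma \ref{comgproj} with ambient category $\K(A\-\Proj)$, which is compactly generated by $\CK^{\bu}$ via Lemma \ref{comgenproj}, and subcategory $\mathcal{A}'=\K_{\rm ac}(A\-\Proj)$, which is closed under arbitrary coproducts because coproducts of acyclic complexes are acyclic. The triangle required by that lemma will be the one arising from the short exact sequence $0\to\CK^{\bu}\xrightarrow{\phi^{\bu}}\P^{\bu}\to\C^{\bu}\to 0$ built in Section \ref{section3}. Since $\CK^l=\bigoplus_{i\in Q_0}P_i^{(\La^{l,0}_i)}$ is a canonical direct summand of $\P^l=\bigoplus_{n\geq 0}\bigoplus_{i\in Q_0}P_i^{(\La^{l,n}_i)}$, this short exact sequence is termwise split and therefore gives a distinguished triangle $\CK^{\bu}\to\P^{\bu}\to\C^{\bu}\to\CK^{\bu}[1]$ in $\K(A\-\Proj)$. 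That $\P^{\bu}$ lies in $\K_{\rm ac}(A\-\Proj)$ is immediate from Proposition \ref{projacy}.

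The only genuine content left to verify is the Hom-vanishing $\Hom_{\K(A\-\Mod)}(\C^{\bu},Y^{\bu})=0$ for every $Y^{\bu}\in\K_{\rm ac}(A\-\Proj)$. I will obtain it from Proposition \ref{cokcop}, which supplies the decomposition $\C^{\bu}=\bigoplus_{n\geq 0}\C^{\bu}_n$. The explicit description of each $\C^{\bu}_n$ given in Section \ref{section3} shows $\C^l_n=0$ for $l\geq n$, so each summand $\C^{\bu}_n$ is a bounded-above complex of projective $A$-modules. Lemma \ref{lemmazero} then applies termwise to give $\Hom_{\K(A\-\Mod)}(\C^{\bu}_n,Y^{\bu})=0$ for every $n$, and since Hom converts a coproduct in its first argument into a product one concludes
\[
\Hom_{\K(A\-\Mod)}(\C^{\bu},Y^{\bu})=\prod_{n\geq 0}\Hom_{\K(A\-\Mod)}(\C^{\bu}_n,Y^{\bu})=0.
\]

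With the triangle in place and this vanishing established, a direct application of Lemma \ref{comgproj} delivers the conclusion that $\P^{\bu}$ is a compact generator of $\K_{\rm ac}(A\-\Proj)$. The main obstacle is really the preparatory work already carried out in the paper: isolating the subcomplex $\CK^{\bu}$ (which via Lemma \ref{comgenproj} bridges to the known compact generator of $\K(A\-\Proj)$ coming from the injective resolution of $kQ_0$), and decomposing its cokernel into the bounded-above pieces $\C^{\bu}_n$ to which Lemma \ref{lemmazero} can be applied. Once these two inputs are in hand, the present theorem is essentially formal.
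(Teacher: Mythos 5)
Your proposal is correct and follows essentially the same route as the paper's own proof: the termwise-split exact sequence $0\to\CK^{\bu}\to\P^{\bu}\to\C^{\bu}\to 0$ yielding a triangle in $\K(A\-\Proj)$, the decomposition $\C^{\bu}=\bigoplus_{n\geq 0}\C^{\bu}_n$ from Proposition \ref{cokcop} combined with Lemma \ref{lemmazero} to get the Hom-vanishing, and then Lemma \ref{comgproj} applied with the compact generator $\CK^{\bu}$ from Lemma \ref{comgenproj}. No gaps; the argument matches the paper step for step.
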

\begin{proof} Recall from Proposition \ref{projacy}  that $\mathcal{P}^{\bullet}$ is an object of ${\rm \mathbf{K}_{ac}}(A\-\Proj)$.
The complex $\mathcal{C}^{\bullet}={\rm Coker}(\phi^{\bullet})$, where $\phi^{\bullet}: \mathcal{K}^{\bullet}\longrightarrow
\mathcal{P}^{\bullet}$ is the inclusion chain map. Then we have the following exact sequence 
$$\xymatrix@C=0.5cm{
  0 \ar[r] & \mathcal{K}^{\bullet} \ar[rr]^{\phi^{\bullet}} && \mathcal{P}^{\bullet} \ar[rr]^{} && \mathcal{C}^{\bullet}  \ar[r] & 0 ,}$$
 which splits in each component.
This gives rise to a triangle \begin{align}\label{eq:q}\CD
 \mathcal{K}^{\bullet}@>\phi^{\bullet}>>\mathcal{P}^{\bullet}@>>>\mathcal{C}^{\bullet} @>>>X[1]
\endCD
\end{align} in the category $\mathbf{K}(A\-\Proj)$. 

By Proposition \ref{cokcop} and Lemma \ref{lemmazero}, the following equality holds \begin{equation*}\Hom_{\mathbf{K}(A\-\Proj)}(\mathcal{C}^{\bullet}, X^{\bullet})= \prod_{n\geq 0}{\Hom}_{\mathbf{K}(A\-\Proj)}(\mathcal{C}^{\bullet}_n, X^{\bullet})=0\end{equation*} for
any $X^{\bullet}\in \mathbf{K}_{\rm ac}(A\-\Proj)$. Recall from Lemma \ref{comgenproj} that $\CK^{\bu}$ is a compact generator of $\K(A\-\Proj)$. By the triangle \eqref{eq:q} and Lemma \ref{comgproj}, the proof is completed.
\end{proof}

\section{The projective Leavitt complex as a differential graded bimodule}
\label{section4}

In this section, we endow the projective Leavitt complex with a differential graded bimodule structure over the corresponding Leavitt path algebra.

\subsection{A module structure of the Leavitt path algebra} Let $k$ be a field and $Q$ be a finite quiver.
We will endow the projective Leavitt complex of $Q$ with a Leavitt path algebra module structure.
Recall from \cite{bap, bamp}
the notion of the Leavitt path algebra.

\begin{defi} \label{defleavittproj}
The \emph{Leavitt path algebra} $L_{k}(Q)$ of $Q$
is the $k$-algebra generated by the set $\{e_{i}\;|\;i\in Q_{0}\}\cup \{\alpha\;|\;\alpha\in Q_{1}\}
\cup\{\alpha^{*}\;|\;\alpha\in Q_{1}\}$ subject to the following relations:

(0) $e_{i}e_{j}=\delta_{i, j}e_{i}$ for every $i, j\in Q_{0}$;

(1) $e_{t(\alpha)}\alpha=\alpha e_{s(\alpha)}=\alpha$ for all $\alpha\in Q_{1}$;

(2) $e_{s(\alpha)}\alpha^{*}=\alpha^{*} e_{t(\alpha)}=\alpha^{*}$ for all $\alpha\in Q_{1}$;

(3) $\alpha\beta^{*}=\delta_{\alpha, \beta}e_{t(\alpha)}$ for all $\alpha,\beta\in Q_{1}$;

(4) $\sum_{\{\alpha\in Q_{1}\;|\;s(\alpha)=i\}}\alpha^{*}\alpha=e_{i}$ for $i\in Q_{0}$ which is not a sink.\hfill $\square$
\end{defi}

Here, $\delta$ denotes the Kronecker symbol. The above relations $(3)$ and $(4)$ are called
\emph{Cuntz-Krieger relations}. The elements $\alpha^{*}$ for $\alpha\in Q_{1}$ are called \emph{ghost arrows}.

The Leavitt path algebra $L_k(Q)$ can be viewed as a quotient algebra of the path algebra as follows.
Denote $\overline{Q}$ the \emph{double quiver} obtained from $Q$ by adding an arrow $\alpha^{*}$ in the opposite direction for each arrow $\alpha$ in $Q$.
The Leavitt path algebra $L_{k}(Q)$ is isomorphic to the quotient algebra of the path algebra $k\overline{Q}$ of the double quiver $\overline{Q}$
modulo the ideal generated by $\{\alpha\beta^{*}-\delta_{\alpha, \beta}e_{t(\alpha)},
\sum_{\{\g\in Q_{1}\;|\; s(\g)=i\}}\g^{*}\g-e_{i}\;|\;\alpha,\beta\in Q_{1}, i\in Q_{0}\text{~which is not a sink}\}.$

If $p=\alpha_{m}\cdots\aa_2\alpha_{1}$ is a path in $Q$ of length $m\geq 1$, we define $p^{*}=\alpha_{1}^*\aa_2^*\cdots\alpha_{m}^*$.
For convention, we set $e_{i}^*=e_{i}$ for $i\in Q_{0}$. The Leavitt path algebra $L_k(Q)$ is spanned by the following set
$\{p^*q\;|\; p, q \text{~are paths in~} Q \text{~with~} t(p)=t(q)\};$
see \cite[Lemma 1.5]{bap}, \cite[Corollary 3.2]{bt} or \cite[Corollary 2.2]{bc2}. By the relation $(4)$, this set is not $k$-linearly independent in general.

For each vertex which is not a sink, we fix a special arrow starting at it.
The following result is \cite[Theorem 1]{baajz}.

\begin{lem} \label{lbasisproj} The following elements form a $k$-basis of the Leavitt path algebra $L_k(Q)$:\begin{enumerate}\item[(1)] $e_i$, $i\in Q_0$;

\item[(2)]  $p, p^*$, where $p$ is a nontrivial path in $Q$;

\item[(3)]  $p^*q$ with $t(p)=t(q)$, where
$p=\alpha_{m}\cdots\alpha_{1}$ and $q=\beta_{n}\cdots\beta_{1}$ are nontrivial paths of $Q$ such that $\alpha_{m}\neq \beta_{n}$, or $\alpha_{m}=\beta_{n}$ which is not special.\hfill $\square$\end{enumerate}
\end{lem}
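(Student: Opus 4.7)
The plan is to prove spanning and linear independence separately, starting from the generating set $\{e_i \mid i \in Q_0\} \cup \{p^*q \mid t(p)=t(q)\}$ recalled just before the statement. For spanning, I would rewrite any element violating the basis condition by a single reduction move. Concretely, suppose $p = \alpha_m\cdots\alpha_1$ and $q = \beta_n\cdots\beta_1$ are nontrivial paths with $t(p)=t(q)$ such that $\alpha_m = \beta_n = \gamma$ where $\gamma$ is the special arrow at $j := s(\gamma)$. Then $p^*q = \widehat{p}^*(\gamma^*\gamma)\widehat{q}$, and substituting Cuntz--Krieger relation (4), $\gamma^*\gamma = e_j - \sum_{\alpha \neq \gamma,\, s(\alpha)=j} \alpha^*\alpha$, produces
$$ p^*q \;=\; \widehat{p}^*\widehat{q} \;-\; \sum_{\substack{\alpha \in Q_1,\, \alpha \neq \gamma \\ s(\alpha)=j}} (\alpha\widehat{p})^*(\alpha\widehat{q}). $$
Each correction term on the right already satisfies the basis condition, since its innermost coinciding arrow is $\alpha \neq \gamma$, hence non-special; the leading term $\widehat{p}^*\widehat{q}$ has strictly smaller $l(p)+l(q)$. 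Induction on $l(p)+l(q)$ terminates in a $k$-linear combination of the claimed basis elements, with the degenerate cases $\widehat{p} = e_{s(\gamma)}$ or $\widehat{q} = e_{s(\gamma)}$ handled via relations (1) and (2) of Definition \ref{defleavittproj}.

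Linear independence is the main obstacle. I would exploit the natural $\Z$-grading on $L_k(Q)$ given by $|e_i| = 0$, $|\alpha| = 1$, $|\alpha^*| = -1$: every proposed basis element is homogeneous with $|p^*q| = l(q) - l(p)$, so it suffices to verify independence within each fixed degree. To do so I would construct a faithful $L_k(Q)$-module $V$ with an explicit $k$-basis, for instance the space spanned by the finite paths of $Q$ together with formal right-infinite non-special tails, on which $\alpha$ acts by left concatenation, $e_i$ by the obvious projection, and $\alpha^*$ by a cancellation tailored so that relations (3) and (4) hold on the nose. One then checks that the claimed basis elements of a fixed degree send a suitably chosen ``long'' path to linearly independent elements of $V$, which rules out any nontrivial relation.

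The conceptual obstacle is thus the linear independence step: the spanning reduction is almost mechanical once the correct rewriting move is identified, whereas excluding hidden relations requires either constructing an explicit faithful representation with considerable bookkeeping or invoking a graded uniqueness theorem for Leavitt path algebras. This is precisely the content of \cite[Theorem 1]{baajz}, and in practice I would appeal to that result directly rather than redevelop the faithful representation from scratch.
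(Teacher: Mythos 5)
Your proposal is correct and lands exactly where the paper does: the paper offers no proof of this lemma at all, it simply quotes \cite[Theorem 1]{baajz}, and your final recourse for the hard step (linear independence) is an appeal to that same theorem. Your spanning reduction via Cuntz--Krieger relation (4), rewriting $p^*q=\widehat{p}^*\widehat{q}-\sum_{\alpha\neq\gamma,\,s(\alpha)=s(\gamma)}(\alpha\widehat{p})^*(\alpha\widehat{q})$ and inducting on $l(p)+l(q)$, is accurate, but it goes beyond what the paper itself records.
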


From now on, $Q$ is a finite quiver without sources. For notation, $Q^{\rm op}$
is the opposite quiver of $Q$. For a path $p$ in $Q$, denote by
$p^{\rm op}$ the corresponding path in $Q^{\rm op}$. The starting and terminating vertices of $p^{\rm op}$
are $t(p)$ and $s(p)$, respectively.
For convention, $e_{j}^{\rm op}=e_{j}$ for each vertex $j\in Q_{0}$. The opposite quiver $Q^{\rm op}$ has no sinks.

For the opposite quiver $Q^{\rm op}$ of $Q$, choose $\aa^{\rm op}$ to
be the special arrow of $Q^{\rm op}$ starting at vertex $i$, where $\aa$ is the associated arrow in $Q$ terminating at $i$. By Lemma \ref{lbasisproj}
there exists a $k$-basis of the Leavitt path algebra $L_k(Q^{\rm op})$, denoted by $\G$. Define a map $\chi:\bigcup_{l\in\Z, i\in Q_{0}}\La^l_i \xra\G$ such that 
$\chi(p, q)={(p^{\rm op})}^*q^{\rm op}$. Here, ${(p^{\rm op})}^*q^{\rm op}$ is the multiplication of ${(p^{\rm op})}^*$ and $q^{\rm op}$ in $L_k(Q^{\rm op})$. The map $\chi$ is a bijection. We identify $\G$ with the set of associated pairs in $Q$. A nonzero element $x$ in $L_k(Q^{\rm op})$ can be written in the unique form
\begin{equation*}
x=\sum_{i=1}^{m}\lambda_{i}(p_{i}^{\rm op})^{\ast}q_{i}^{\rm op}
\end{equation*}
with $\lambda_{i}\in k$ nonzero scalars
and $(p_{i}, q_{i})$ pairwise distinct associated pairs in $Q$.

In what follows, $B=L_k(Q^{\rm op})$. We write $ab$ for the multiplication of $a$ and $b$ in B for $a, b\in B$. Recall that the projective Leavitt complex
$\mathcal{P}^{\bullet}=(\P^{l}, \d^l)_{l\in\Z}$ and
$\mathcal{P}^{l}=\bigoplus_{i\in Q_{0}}{P_{i}}^{(\La^{l}_{i})}$.

We define a right $B$-module action on $\P^{\bullet}$.
For each vertex $j\in Q_{0}$ and each arrow $\alpha\in Q_{1}$, define right actions $``{\cdot}"$ on
$\mathcal{P}^{l}$ for any $l\in\Z$ as follows. 
For any element $x\zeta_{(p, q)}\in P_{i}\zeta_{(p, q)}$ with $i\in Q_{0}$ and
$(p, q)\in \mathbf{\La}^{l}_{i}$, we set 

\begin{equation}\label{actionproj1}{x\zeta_{(p, q)}}{\cdot} e_{j}=\delta_{j, t(q)}x\zeta_{(p, q)};\end{equation}
\begin{equation}\label{actionproj2}{x\zeta_{(p, q)}}{\cdot}\alpha^{\rm op}=\begin{cases}
 x\zeta_{(\widetilde{p}, e_{t(\alpha)})}-\sum\limits_{\beta\in T(\alpha)}x
 \zeta_{(\widetilde{p}\beta, \beta)}
 , & \begin{matrix}
\text{if~} l(q)=0, p=\widetilde{p}\alpha,\\ \text{and~}\alpha \text{~is associated};
\end{matrix}\\
 \delta_{s(\alpha),t(q)}x\zeta_{(p, \alpha q)}, & \text{otherwise}.
\end{cases}
\end{equation}
\begin{equation}\label{actionproj3}{x\zeta_{(p, q)}}{\cdot} (\alpha^{\rm op})^{*}=\begin{cases}
\delta_{\alpha, \alpha_{1}}x\zeta_{(p, \widehat{q})}, & \text{if $q=\alpha_{1}\widehat{q}$ };\\
\delta_{s(p),t(\alpha)}x\zeta_{(p\alpha,  e_{s(\alpha)})}, & \text{if $l(q)=0$}.
\end{cases}
\end{equation}

Here for the notation, a path $p=\alpha_{m}\cdots\aa_2\alpha_{1}$ in $Q$ of length
$m\geq 2$ has two truncations $\widehat{p}=\alpha_{m-1}\cdots\alpha_{1}$ and $\widetilde{p}=\alpha_{m}\cdots\alpha_{2}$. For an arrow $\aa$, $\widehat{\aa}=e_{s(\aa)}$ and $\widetilde{\aa}=e_{t(\aa)}$. The set $T(\aa)=\{\b\in Q_1\;|\; t(\b)=t(\aa), \b\neq \aa\}$ for an associated arrow $\aa$.

We observe the following fact:
\begin{equation}
\label{obsactionproj}
\begin{cases}
{x\zeta_{(p, q)}}{\cdot} \alpha^{\rm op}=0, & \text{If $s(\alpha)\neq t(q)$};\\
{x\zeta_{(p, q)}}{\cdot} (\alpha^{\rm op})^{*}=0, &\text{If $t(\alpha)\neq t(q)$}.
 \end{cases}
\end{equation}

\begin{lem} \label{lmoduleproj} The above actions make the projective Leavitt complex $\mathcal{P}^{\bullet}$ of $Q$
a right $B$-module.
\end{lem}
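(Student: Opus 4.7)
The plan is to extend the prescribed right action of the generators of $B = L_k(Q^{\rm op})$ on $\mathcal{P}^{\bullet}$ by $k$-linearity and verify that it respects the six defining relations (0)--(4) of Definition \ref{defleavittproj} applied to $Q^{\rm op}$. A $k$-basis of $\P^l$ is given by the elements $x\z_{(p,q)}$ with $x \in \{e_i\} \cup \{\aa \in Q_1 \mid s(\aa) = i\}$ and $(p,q) \in \La^l_i$, so it suffices to check each relation on such a basis element; the coefficient $x$ plays no active role in the bookkeeping, so we write $m = x\z_{(p,q)}$ throughout. Relations (0), (1), and (2) follow routinely from the idempotent behavior in $(\ref{actionproj1})$ together with the source/target vanishing in $(\ref{obsactionproj})$.

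For the Cuntz--Krieger relation (3), $\aa^{\rm op}(\b^{\rm op})^* = \d_{\aa,\b}\, e_{s(\aa)}$, one computes $(m \cdot \aa^{\rm op}) \cdot (\b^{\rm op})^*$ by cases on which branch of $(\ref{actionproj2})$ produces $m \cdot \aa^{\rm op}$. In the ``otherwise'' branch, $m \cdot \aa^{\rm op}$ is the single term $\d_{s(\aa), t(q)}\, x\z_{(p, \aa q)}$ whose last arrow is $\aa$; applying $(\b^{\rm op})^*$ via the first branch of $(\ref{actionproj3})$ then picks out $\b = \aa$ and recovers $\d_{\aa,\b}\, m \cdot e_{s(\aa)}$. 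In the special branch (which requires $l(q) = 0$, $p = \widetilde{p}\aa$, and $\aa$ associated), $m \cdot \aa^{\rm op}$ is a signed combination; applying $(\b^{\rm op})^*$ term by term uses the second branch of $(\ref{actionproj3})$ on $x\z_{(\widetilde{p}, e_{t(\aa)})}$ and the first branch on each $x\z_{(\widetilde{p}\g, \g)}$, and the resulting contributions cancel whenever $\b \neq \aa$, leaving $m \cdot e_{s(\aa)}$ when $\b = \aa$.

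The main obstacle is relation (4), $\sum_{\{\aa \in Q_1 \,:\, t(\aa) = i\}}(\aa^{\rm op})^*\aa^{\rm op} = e_i$. When $t(q) \neq i$ both sides vanish. When $t(q) = i$ and $l(q) > 0$, only the last arrow of $q$ contributes via $(\ref{actionproj3})$, and a short check shows that the subsequent $\aa^{\rm op}$-step cannot trigger the special branch of $(\ref{actionproj2})$: if $l(q) = 1$ then the associated-pair hypothesis on $(p,q)$ forces the sole arrow of $q$ to be non-associated, ruling out that case; so $m$ is reconstructed directly. The genuinely delicate case is $l(q) = 0$, hence $q = e_i$ and $s(p) = i$: every $\aa$ with $t(\aa) = i$ contributes $m \cdot (\aa^{\rm op})^* = x\z_{(p\aa,\, e_{s(\aa)})}$, and the special branch of $(\ref{actionproj2})$ then applies precisely for the unique associated arrow $\aa_0$ terminating at $i$, yielding
\[
x\z_{(p,\, e_i)} \;-\; \sum_{\g \in T(\aa_0)} x\z_{(p\g,\, \g)},
\]
while each non-associated $\aa \in T(\aa_0)$ contributes $x\z_{(p\aa,\, \aa)}$. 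Summing, the $T(\aa_0)$-indexed terms telescope to zero and one is left with $x\z_{(p, e_i)} = m = m \cdot e_i$. This telescoping is exactly what the associated-arrow branch of $(\ref{actionproj2})$ was engineered to produce; the only remaining bookkeeping is to observe that every subscript $(p\g, \g)$ appearing is a valid associated pair, which holds because each $\g \in T(\aa_0)$ is non-associated.
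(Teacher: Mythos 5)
Your proof is correct and takes essentially the same approach as the paper: both check the defining relations (0)--(4) of $L_k(Q^{\rm op})$ on the basis elements $x\zeta_{(p,q)}$, with the same decisive cancellations over $T(\alpha)$ in relations (3) and (4). The only cosmetic imprecision is in your $l(q)=1$ check: the associated-pair condition does not make the sole arrow of $q$ non-associated outright, it only excludes the conjunction ``$p$ begins with that arrow and that arrow is associated,'' which is exactly what is needed to rule out the special branch of \eqref{actionproj2}.
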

\begin{proof} We prove that the above right actions
satisfy the defining relations of the Leavitt path algebra $L_k(Q^{\rm op})$ of the opposite quiver $Q^{\rm op}$. We fix $x\z_{(p, q)}\in P_i\z_{(p, q)}\subseteq \P^l$.

For $(0)$, we observe that ${x\zeta_{(p, q)}}{\cdot} (e_{j}\circ e_{j'})=\delta_{j, j'}
{x\zeta_{(p, q)}}{\cdot} e_{j}.$

For $(1)$, for each $\alpha\in Q_{1}$ we have 
\begin{align*}{x\zeta_{(p, q)}}{\cdot}(\alpha^{\rm op} e_{t(\alpha)})
&=({x\zeta_{(p, q)}}{\cdot}\alpha^{\rm op}){\cdot} e_{t(\alpha)}\\
&={x\zeta_{(p, q)}}{\cdot}\alpha^{\rm op}\end{align*} 
We have
\begin{align*}{x\zeta_{(p, q)}}{\cdot}( e_{s(\alpha)}\alpha^{\rm op})
&=({x\zeta_{(p, q)}}{\cdot}e_{s(\alpha)}){\cdot}\alpha^{\rm op}\\
&=\delta_{s(\aa), t(q)}{x\zeta_{(p, q)}}{\cdot}\alpha^{\rm op}\\
&={x\zeta_{(p, q)}}_{\cdot}\alpha^{\rm op},\end{align*} where the last equality uses \eqref{obsactionproj}.
Similar arguments prove the relation $(2)$.

For $(3)$, we have that for $\alpha, \beta\in Q_{1}$
\[
\begin{aligned}
&{x\zeta_{(p, q)}}{\cdot} (\alpha^{\rm op}(\beta^{\rm op})^{*})=({x\zeta_{(p, q)}}{\cdot}\alpha^{\rm op}){\cdot}(\beta^{\rm op})^{*}\\
&=\left\{\begin{array}{ll}
\delta_{t(\alpha), t(\beta)}x\zeta_{(\widetilde{p}\beta, e_{s(\beta)})}-
\sum\limits_{\gamma\in T(\alpha)}
\delta_{\gamma, \beta}x\zeta_{(\widetilde{p}\gamma, e_{s(\gamma)})}, &
\begin{matrix}\text{if~~} l(q)=0, p=\widetilde{p}\alpha\\ \text{and~}\alpha
\text{~is~associated;} \end{matrix}\\
 \delta_{s(\alpha), t(q)}\delta_{\alpha,\beta}x\zeta_{(p, q)}, & \text{otherwise};
 \end{array}\right.\\
&=\delta_{s(\alpha), t(q)}\delta_{\alpha,\beta}x\zeta_{(p, q)}\\
&={x\zeta_{(p, q)}}{\cdot}(\delta_{\alpha, \beta}e_{s(\alpha)})
\end{aligned}
\]
Here, we use the fact that
in the case that $l(q)=0$, $p=\widetilde{p}\alpha$ and $\alpha$ is associated, if $\alpha=\beta$,
then $s(\alpha)=t(q)$ and $\gamma\neq\beta$ for each $\gamma\in T(\alpha)$; and if
$\alpha\neq\beta$ with $t(\alpha)=t(\beta)$, then there exists an arrow $\gamma\in T(\aa)$ such that $\gamma=\beta$.

For $(4)$, for each $j\in Q_{0}$ we have that: if $\alpha\in Q_{1}$ with $t(\alpha)=j$ is associated, then
\begin{equation*}\begin{split}{x\zeta_{(p, q)}}{\cdot}( (\alpha^{\rm op})^{*}\alpha^{\rm op})
&=({x\zeta_{(p, q)}}{\cdot}(\alpha^{\rm op})^{*}){\cdot}\alpha^{\rm op}\\
&=\begin{cases}
\delta_{\alpha, \alpha_{1}}(x\zeta_{(p,  q)}, & \text{if $q=\alpha_{1}\widehat{q}$};\\
\delta_{j',s(p)}(x\zeta_{(p, e_{s(p)})}-\sum\limits_{\beta\in
T(\alpha)}x\zeta_{(p\beta, \beta)}), & \text{if $l(q)=0$}.\\
\end{cases}
\end{split}
\end{equation*} 
If $\alpha\in Q_{1}$ with $t(\alpha)=j$ is not associated, then
\begin{equation*}{x\zeta_{(p, q)}}{\cdot}((\alpha^{\rm op})^{*}\alpha^{\rm op})=\begin{cases}
\delta_{\alpha, \alpha_{1}}x\zeta_{(p,  q)}, & \text{if $q=\alpha_{1}\widehat{q}$};\\
\delta_{j, s(p)}x\zeta_{(p\alpha, \alpha)}, & \text{if $l(q)=0$}.\\
\end{cases}
\end{equation*} Thus, we have the following equality
\[
\begin{aligned}
&{x\zeta_{(p, q)}}{\cdot}(\sum_{\{\alpha\in Q_{1}\;|\;t(\alpha)=j\}}(\alpha^{\rm op})^{*}\alpha^{\rm op})\\
&=\left\{\begin{array}{ll}
\delta_{j, t(q)}x\zeta_{(p, q)}, & \text
{if $q=\alpha_{1}\widehat{q}$};\\
 \delta_{j, s(p)}x\zeta_{(p, e_{s(p)})}, & \text{if $l(q)=0$}.
 \end{array}\right.\\
&=\delta_{j, t(q)}x\zeta_{(p, q)}\\
&={x\zeta_{(p, q)}}{\cdot} e_{j}.
\end{aligned}
\] 
\end{proof}

The following observation gives an intuitive description of the $B$-module action on $\P^{\bu}$.

\begin{lem} \label{lmodulep} Let $(p, q)$ be an associated pair in $Q$.
\begin{enumerate}
\item We have $\sum_{i\in Q_0}e_i\zeta_{(e_{i}, e_i)}\cdot (p^{\rm op})^*q^{\rm op}=e_{t(p)}\zeta_{(p,  q)};$ 

\item For each arrow $\b\in Q_1$, we have the following equality holds: $$\b\zeta_{(e_{s(\b)}, e_{s(\b)})}\cdot (p^{\rm op})^*q^{\rm op}=\d_{s(\b), t(p)}\b\zeta_{(p, q)}.$$
\end{enumerate}
\end{lem}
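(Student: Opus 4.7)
The plan is to prove both parts simultaneously by induction on $l(q)$, establishing the refined identity
\[ e_i\zeta_{(e_i,e_i)}\cdot(p^{\rm op})^*q^{\rm op} \,=\, \delta_{i,t(p)}\, e_{t(p)}\zeta_{(p,q)} \]
for every vertex $i$ and every associated pair $(p,q)$. Part (1) then follows by summing over $i\in Q_0$. For part (2), a direct inspection of \eqref{actionproj1}--\eqref{actionproj3} shows that the right $B$-action is left $A$-linear, i.e.\ $(a\cdot x)\zeta_{(p',q')}\cdot y = a\cdot(x\zeta_{(p',q')}\cdot y)$; combined with $\beta\zeta_{(e_{s(\beta)},e_{s(\beta)})} = \beta\cdot e_{s(\beta)}\zeta_{(e_{s(\beta)},e_{s(\beta)})}$, this reduces (2) to the refined identity at $i = s(\beta)$ followed by a left multiplication by $\beta$ that uses $\beta e_{t(p)} = \beta$ precisely when $s(\beta) = t(p)$.

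For the base case $l(q) = 0$, one has $q = e_{s(p)}$ and hence $(p^{\rm op})^*q^{\rm op} = (p^{\rm op})^*$. When $l(p) = 0$ the identity is immediate from \eqref{actionproj1}. Otherwise, writing $p = \alpha_m\cdots\alpha_1$ gives $(p^{\rm op})^* = (\alpha_m^{\rm op})^*\cdots(\alpha_1^{\rm op})^*$, and I would apply these ghost arrows in sequence using the second case of \eqref{actionproj3}. The first application introduces the Kronecker factor $\delta_{i,t(\alpha_m)} = \delta_{i,t(p)}$, killing all contributions with $i\neq t(p)$; for $i = t(p)$, the path condition $s(\alpha_k) = t(\alpha_{k-1})$ allows the subsequent applications to proceed unobstructed, producing the intermediate states $e_{t(p)}\zeta_{(\alpha_m\cdots\alpha_{m-j+1},\,e_{s(\alpha_{m-j+1})})}$ and terminating after $m$ steps at $e_{t(p)}\zeta_{(p,e_{s(p)})}$.

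For the inductive step, write $q = \beta\widehat{q}$ where $\beta$ is the leftmost (last-traversed) arrow of $q$; then $q^{\rm op} = \widehat{q}^{\rm op}\beta^{\rm op}$, and $(p,\widehat{q})$ is again an associated pair (either with the same first-traversed arrow as $q$, or of the trivial form $(p,e_{s(p)})$). The inductive hypothesis for $(p,\widehat{q})$ supplies $\delta_{i,t(p)}e_{t(p)}\zeta_{(p,\widehat{q})}$, and it remains to apply $\beta^{\rm op}$ via \eqref{actionproj2}. The crux, and what I expect to be the main obstacle, is showing that the special (first) case of \eqref{actionproj2} never triggers: it requires $l(q') = 0$, $p = \widetilde{p}\,\beta$, and $\beta$ associated, but when $l(\widehat{q})\geq 1$ the first condition fails, and when $l(\widehat{q}) = 0$ (so $q = \beta$) the associated-pair hypothesis on $(p,q)$ explicitly forbids the conjunction of the other two. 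Hence the otherwise case applies, and its delta $\delta_{s(\beta),t(\widehat{q})}$ equals $1$ (by the path structure of $q$, or by $s(\beta) = s(p) = t(e_{s(p)})$ when $\widehat{q}$ is trivial), completing the induction. This precise exclusion of the special case, forced by the associated-pair definition, is the substantive algebraic content of the lemma.
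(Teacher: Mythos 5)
Your proof is correct and follows essentially the same route as the paper: the paper's own (very terse) proof simply observes that because $(p,q)$ is an associated pair, every application of the generators of $(p^{\rm op})^{*}q^{\rm op}$ stays in the second subcases of \eqref{actionproj3} and \eqref{actionproj2}, and then concludes by direct calculation. Your induction on $l(q)$, with the key exclusion of the special case of \eqref{actionproj2} forced by the associated-pair condition, is exactly that calculation written out in full detail.
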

\begin{proof} Since $(p, q)$ is an associated pair in $Q$, we are in the second subcases in \eqref{actionproj3} and \eqref{actionproj2} for the right action of $(p^{\rm op})^*q^{\rm op}$. Then the statements follow from direct calculation.
\end{proof}

\subsection{A differential graded bimodule}

We recall some notion on differential graded modules; see \cite{bke}.
Let $A=\bigoplus_{n\in\Z}A^{n}$ be a $\Z$-graded algebra. For a (left) graded $A$-module $X=\bigoplus_{n\in\Z}X^n$, elements $x$ in $X^{n}$ are said to be homogeneous of degree $n$, denoted by $|x|=n$.

A \emph{differential graded algebra} (dg algebra for short) is a $\Z$-graded algebra $A$ with a differential
$d:A \xra A$ of degree one satisfying $d(ab)=d(a)b+(-1)^{|a|}ad(b)$ for homogenous elements
$a,b\in A$.

A \emph{(left) differential graded} $A$-module (dg $A$-module for short) $X$
is a graded $A$-module $X=\bigoplus_{n\in \mathbb{Z}}X^{n}$ with a differential $d_{X}:X\xra X$
of degree one satisfying $d_{X}(a{\cdot} x)=d(a){\cdot} x+(-1)^{|a|}a{\cdot} d_{X}(x)$ for homogenous
elements $a\in A$ and $x\in X$. A morphism of dg $A$-modules is a morphism of $A$-modules which 
preserves degrees and commutes with differentials.
A \emph{right differential graded} $A$-module (right dg $A$-module for short) $Y$
is a right graded $A$-module $Y=\bigoplus_{n\in \Z}Y^{n}$ with a differential $d_{Y}:Y\xra Y$
of degree one satisfying $d_{Y}(y\cdot a)=d_{Y}(y)\cdot a+(-1)^{|y|}y\cdot d(a)$ for homogenous
elements $a\in A$ and $y\in Y$. Here, central dots denotes the $A$-module action.

Let $B$ be another dg algebra.
Recall that a \emph{dg $A$-$B$-bimodule} $X$ is a left dg $A$-module as well as a right dg
$B$-module satisfying $(a\cdot x)\cdot b=a\cdot (x\cdot b)$ for $a\in A$, $x\in X$ and $b\in B$.

Recall that $Q$ is a finite quiver without sources. In what follows, we write $B=L_k(Q^{
\rm op})$, which is naturally $\Z$-graded by the length of paths. We view $B$ as a dg algebra with trivial differential.

Consider $A=kQ/J^{2}$ as a dg algebra concentrated in degree zero. 
Recall the projective Leavitt complex $\P^{\bu}=\bigoplus_{l\in\Z}\P^l$. It is a left dg $A$-module.
By Lemma \ref{lmoduleproj}, $\P^{\bu}$ is a right $B$-module. By (\ref{actionproj1}),
(\ref{actionproj2}) and (\ref{actionproj3}), we have that $\P^{\bu}$ is a right graded $B$-module.

There is a unique right $B$-module morphism $\phi: B\xra \P^{\bullet}$ with $$\phi(1)=\sum_{i\in Q_0}e_i\z_{(e_i, e_i)}.$$ Here, $1$ is the unit of $B$. For each arrow $\b\in Q_1$, there is a unique right $B$-module morphism $\phi_{\b}:B\xra \P^{\bu}$ with $\phi_{\b}(1)=\b\z_{(e_{s(\b)}, e_{s(\b)})}$. By Lemma \ref{lmodulep} we have 
\begin{equation}
\label{defphi}
\phi((p^{\rm op})^*q^{\rm op}) 
=e_{t(p)}\z_{(p, q)}\text{~and~} \phi_{\b}((p^{\rm op})^*q^{\rm op}) 
=\delta_{s(\b), t(p)}\b\z_{(p, q)}
\end{equation} 
for $(p^{\rm op})^*q^{\rm op}\in\G$.
Here, we emphasize that $\G$ is the $k$-basis of $B=L_k(Q^{\rm op})$. Then $\phi$ is injective and the restriction of $\phi_{\b}$ to $e_{s(\b)}B$ is injective. Observe that both $\phi$ and $\phi_{\b}$ are graded $B$-module morphisms. 

\begin{lem} \label{deltamodule} For each $i\in Q_0$, $l\in \Z$ and $(p, q)\in\La^l_i$, we have $$(\d^{l}\circ\phi)((p^{\rm op})^{*}q^{\rm op})
=\sum_{\{\aa\in Q_1\;|\;t(\aa)=i\}}\phi_{\aa}(\aa^{\rm op}(p^{\rm op})^{*}q^{\rm op}).$$ From this, we have that $(\d^{l}\circ\phi)(b)
=\sum_{\aa\in Q_1}\phi_{\aa}(\aa^{\rm op}b)$ for $b\in B^l$.
\end{lem}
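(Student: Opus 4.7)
The plan is to compute both sides directly, splitting into cases according to whether $l(p) = 0$ or $l(p) \geq 1$. By \eqref{defphi}, the left hand side equals $\d^l(e_i\z_{(p,q)})$, which by Definition~\ref{defproj} is $\beta\z_{(\widehat{p}, q)}$ when $p = \beta\widehat{p}$ has length at least one, and $\sum_{\{\b\in Q_1\;|\; t(\b) = i\}}\b\z_{(e_{s(\b)}, q\b)}$ when $l(p) = 0$.

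For the right hand side, I would simplify the product $\aa^{\rm op}(p^{\rm op})^{*}q^{\rm op}$ inside $B = L_k(Q^{\rm op})$ using the Cuntz--Krieger relations of Definition~\ref{defleavittproj}. When $l(p) \geq 1$, writing $p = \beta\widehat{p}$ gives $(p^{\rm op})^{*} = (\beta^{\rm op})^{*}(\widehat{p}^{\rm op})^{*}$; relation~(3) then yields $\aa^{\rm op}(\beta^{\rm op})^{*} = \d_{\aa, \beta}e_{s(\beta)}$, so only the term $\aa = \beta$ contributes and the product reduces to $(\widehat{p}^{\rm op})^{*}q^{\rm op}$. Since $(\widehat{p}, q)$ is again an associated pair, applying \eqref{defphi} to $\phi_\beta$ yields $\beta\z_{(\widehat{p},q)}$, matching the LHS. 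When $l(p) = 0$, $(p^{\rm op})^{*} = e_i$, and for each $\aa$ with $t(\aa) = i$ the product becomes $\aa^{\rm op}q^{\rm op} = (q\aa)^{\rm op}$, which is the basis element of $\G$ corresponding to the associated pair $(e_{s(\aa)}, q\aa)$. Then \eqref{defphi} gives $\phi_\aa((q\aa)^{\rm op}) = \aa\z_{(e_{s(\aa)}, q\aa)}$, and summing over all such $\aa$ recovers the LHS.

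For the second assertion, the key observation is that for any basis element $b = (p^{\rm op})^{*}q^{\rm op}\in\G$ with $(p,q)\in\La^l_i$, the term $\phi_\aa(\aa^{\rm op}b)$ already vanishes whenever $t(\aa)\ne i$: either because $\aa^{\rm op}(\beta^{\rm op})^{*} = 0$ for $\aa\ne\beta$ when $l(p)\geq 1$, or because $\aa^{\rm op}e_i = 0$ when $l(p) = 0$ and $t(\aa)\ne i$. Hence enlarging the sum from $\{\aa:t(\aa)=i\}$ to all of $Q_1$ is harmless, and $k$-linearity over the basis $\G\cap B^l$ extends the equality to every $b\in B^l$. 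The hardest part is not conceptual but notational: one must keep careful track of the identification $(p^{\rm op})^{*}q^{\rm op}\leftrightarrow (p,q)$, the order-reversing behavior of the star, and the identity $(q\aa)^{\rm op} = \aa^{\rm op}q^{\rm op}$. Once these conventions are in hand, the proof reduces to a single application of a Cuntz--Krieger relation followed by matching of terms.
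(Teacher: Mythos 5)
Your proposal is correct and follows essentially the same route as the paper: case-split on $l(p)=0$ versus $l(p)\geq 1$, reduce $\aa^{\rm op}(p^{\rm op})^{*}q^{\rm op}$ to $\delta_{\aa,\beta}(\widehat{p}^{\rm op})^{*}q^{\rm op}$ or $(q\aa)^{\rm op}$ (the paper's equation \eqref{eq:aaa}, which it states as an observation and you derive from the Cuntz--Krieger relations), and match both sides via \eqref{defphi}. Your explicit treatment of the vanishing of $\phi_{\aa}(\aa^{\rm op}b)$ for $t(\aa)\neq i$ and the linearity argument for the second assertion is a detail the paper leaves implicit, but it is the intended argument.
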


\begin{proof} For each arrow $\aa\in Q_1$ and $(p^{\rm op})^*q^{\rm op}\in\G$, we observe that 
\begin{equation}
\label{eq:aaa}
\aa^{\rm op}(p^{\rm op})^*q^{\rm op}=
\begin{cases}
\delta_{\aa, \aa_1}(\widehat{p}^{\rm op})^{*}q^{\rm op},& \text{if $p=\aa_1 \widehat{p}$};\\
(q\aa)^{\rm op},& \text{if $l(p)=0$},
\end{cases}
\end{equation} which are combinations of basis elements of $L_k(Q^{\rm op})$. Then we have that \begin{equation*}
\begin{split}(\d^{l}\circ\phi)((p^{\rm op})^{*}q^{\rm op})
&=\d^{l}(e_i\z_{(p, q)})\\
&=\begin{cases}
\aa_1\z_{(\widehat{p},q)},& \text{if $p=\aa_1\widehat{p}$};\\
\sum_{\{\aa\in Q_1\;|\;t(\aa)=i\}}\aa\z_{(e_{s(\aa)}, q\aa)}, & \text{if $l(p)=0$}.
\end{cases}\\
&=\sum_{\{\aa\in Q_1\;|\;t(\aa)=i\}}\phi_{\aa}(\aa^{\rm op} (p^{\rm op})^*q^{\rm op}).
\end{split}
\end{equation*} Here, the last equality uses \eqref{eq:aaa}.
\end{proof}

It is evident that the projective Leavitt complex $\P^{\bu}$ is a graded $A$-$B$-bimodule. The following result shows that $\P^{\bu}$ is a dg $A$-$B$-bimodule.  Recall from Definition \ref{defproj} for the differentials $\d^l$ of $\P^{\bu}$.

\begin{prop} \label{propbimproj}  For each $l\in\Z$, let $x\zeta_{(p, q)}\in P_i\zeta_{(p, q)}$ with $i\in Q_{0}$
and $(p, q)\in \La^{l}_{i}$. Then for each vertex $j\in Q_{0}$ and each arrow $\beta\in Q_{1}$ we have 

\begin{enumerate}

\item $\d^{l}({x\zeta_{(p, q)}}{\cdot}e_{i})=\d^{l}(x\zeta_{(p, q)}){\cdot}e_{i};$

\item $ \d^{l+1}({x\zeta_{(p, q)}}{\cdot}\beta^{\rm op})=\d^{l}(x\zeta_{(p, q)}){\cdot}\beta^{\rm op};$

\item $\d^{l-1}({x\zeta_{(p, q)}}{\cdot}(\beta^{\rm op})^{*})=\d^{l}(x
\zeta_{(p, q)}){\cdot}(\beta^{\rm op})^{*}.$
\end{enumerate} In other words, the right $B$-action makes $\P^{\bu}$ a right dg $B$-module and thus a dg $A$-$B$-bimodule.
\end{prop}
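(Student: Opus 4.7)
The plan is as follows. Since $B=L_k(Q^{\rm op})$ carries the trivial differential, the dg-bimodule compatibility for $\P^{\bu}$ reduces to the graded Leibniz identity $\d(y\cdot b)=\d(y)\cdot b$ for homogeneous $y\in\P^{\bu}$ and $b\in B$. Because $B$ is generated as a $k$-algebra by the elements $e_j$, $\b^{\rm op}$ and $(\b^{\rm op})^*$, it suffices to check this identity on these generators, which is precisely what statements (1), (2), (3) assert. I would therefore proceed by direct case analysis on the form of the basis vector $y=x\z_{(p,q)}$ (according as $x=e_i$ or $x=\aa\in Q_1$) and on the lengths and leading arrows of $p$ and $q$, matching the subcases of the action formulas \eqref{actionproj1}--\eqref{actionproj3} against the subcases of the differential in Definition \ref{defproj}.

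The case $x=\aa\in Q_1$ is immediate: by definition $\d^l(\aa\z_{(p,q)})=0$, and inspection of \eqref{actionproj1}--\eqref{actionproj3} shows that for every generator $b\in B$ the product $\aa\z_{(p,q)}\cdot b$ is a $k$-linear combination of terms of the shape $\aa\z_{(p',q')}$, on which $\d$ again vanishes. Both sides of each of (1)--(3) are therefore zero, and the content of the proposition is reduced to the case $x=e_i$.

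For $x=e_i$ I would treat (1), (2), (3) separately. Part (1) is essentially formal, because the action by $e_j$ multiplies $x\z_{(p,q)}$ by the scalar $\delta_{j,t(q)}$, and every term appearing in $\d^l(e_i\z_{(p,q)})$ either has second coordinate $q$ (in the subcase $p=\b\widehat p$) or has second coordinate $q\b$ with $t(q\b)=i=t(q)$ (in the subcase $l(p)=0$), so the scalar passes through. For parts (2) and (3) one matches the two subcases of the action with the two subcases of the differential, using the fact that operations on the leading end of $p$ (invoked by $\d^l$) commute with operations on the trailing end of $p$ and with prepending or stripping arrows on $q$ (invoked by the right $B$-action).

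The main obstacle is the subcase of (2) in which $l(q)=0$, $p=\widetilde p\aa$ and $\aa$ is associated, because the action by $\aa^{\rm op}$ produces the two-part expression $e_i\z_{(\widetilde p, e_{t(\aa)})}-\sum_{\b\in T(\aa)}e_i\z_{(\widetilde p\b,\b)}$ rather than a single term. If $l(\widetilde p)\geq 1$ then every summand factors out the common leading arrow of $p$ and the matching with $\d^l(e_i\z_{(p,q)})\cdot\aa^{\rm op}$ is term-by-term. If $l(\widetilde p)=0$, so that $p=\aa$, then the sum $\sum_{\{\g\mid t(\g)=t(\aa)\}}\g\z_{(e_{s(\g)},\g)}$ arising from $\d^{l+1}(e_i\z_{(e_{t(\aa)},e_{t(\aa)})})$ splits as $\aa\z_{(e_{s(\aa)},\aa)}+\sum_{\b\in T(\aa)}\b\z_{(e_{s(\b)},\b)}$, and the second summand cancels exactly the contribution $\sum_{\b\in T(\aa)}\d^{l+1}(e_i\z_{(\b,\b)})$, leaving only $\aa\z_{(e_{s(\aa)},\aa)}$. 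This coincides with $\d^l(e_i\z_{(\aa,e_{s(\aa)})})\cdot\aa^{\rm op}=\aa\z_{(e_{s(\aa)},e_{s(\aa)})}\cdot\aa^{\rm op}=\aa\z_{(e_{s(\aa)},\aa)}$ computed on the other side via the second subcase of \eqref{actionproj2}. This delicate cancellation is precisely where the Cuntz--Krieger relation~(4) for $L_k(Q^{\rm op})$ is encoded into the construction of $\P^{\bu}$; once it is disposed of, the remaining subcases in (2) and (3) are routine verifications of the same flavor.
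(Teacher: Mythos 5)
Your proposal is correct in substance but takes a genuinely different route from the paper. You argue by direct case analysis: you reduce to the algebra generators $e_j$, $\b^{\rm op}$, $(\b^{\rm op})^*$ of $B$ (legitimate, since the right $B$-module structure is already established in Lemma \ref{lmoduleproj} and the identity $\d(y\cdot b)=\d(y)\cdot b$ is multiplicative in $b$), you dispose of the case $x=\aa$ because the action never changes the coefficient $x$ while $\d$ kills arrows, and you then match the subcases of \eqref{actionproj1}--\eqref{actionproj3} against those of Definition \ref{defproj}, confronting head-on the one genuinely delicate subcase ($l(q)=0$, $p=\widetilde{p}\aa$ with $\aa$ associated, acted on by $\aa^{\rm op}$), where your cancellation over $T(\aa)$ is exactly right. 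The paper instead argues formally: for $x=e_i$ it writes $e_i\z_{(p,q)}=\phi((p^{\rm op})^*q^{\rm op})$ and computes $\d^l\circ\phi$ via Lemma \ref{deltamodule}, so the Leibniz identity follows from associativity in $B$ (left multiplication by $\g^{\rm op}$ commutes with right multiplication by the generator) together with the fact that $\phi$ and $\phi_{\g}$ are right $B$-module morphisms; all Cuntz--Krieger cancellation is thereby absorbed once and for all into Lemmas \ref{lmoduleproj} and \ref{lmodulep}, and the same machinery ($\phi$, $\phi_{\g}$, Lemma \ref{deltamodule}) is reused in Section \ref{section5}. Your approach is more elementary and makes visible exactly where relation (4) of $L_k(Q^{\rm op})$ meets the differential; the paper's is shorter and avoids the bookkeeping that your ``routine'' subcases still require --- for instance, when $l(p)=1$ the differential produces a pair with trivial first component, so the two sides of (2) are evaluated by \emph{different} subcases of \eqref{actionproj2}, and in (1) what passes the scalar through is $t(q\b)=t(q)$, not $t(q\b)=i$ as you wrote (that holds only when $l(q)=0$). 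These are small repairs, not gaps.
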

\begin{proof}
Recall that $\d^l(\aa\zeta_{(p, q)})=0$ for $\aa\in Q_1$ with $s(\aa)=i$.
It follows that $(1)$, $(2)$ and $(3)$ hold for $x=\aa$. It suffices to prove that $(1)$, $(2)$ and $(3)$ hold for $x=e_i$. We recall that $(p, q)\in \La^l_i$, and thus $t(p)=i$.

For $(1)$, we have that
\begin{equation*}
\begin{split}
\d^{l}({e_i\zeta_{(p, q)}}{\cdot}e_{j})
&=\d^{l}(\phi((p^{\rm op})^*q^{\rm op})e_j)\\
&=(\d^{l}\circ\phi)((p^{\rm op})^*q^{\rm op}e_j)\\
&=\sum_{\{\aa\in Q_1\;|\;t(\aa)=i\}}\phi_{\aa}(\aa^{\rm op}(p^{\rm op})^*q^{\rm op}e_{j})\\
&=\sum_{\{\aa\in Q_1\;|\;t(\aa)=i\}}\phi_{\aa}(\aa^{\rm op}(p^{\rm op})^*q^{\rm op}){\cdot}e_{j}\\
&=\d^{l}(e_i\zeta_{(p, q)}){\cdot}e_{j}.
\end{split}
\end{equation*} Here, the second and the fourth equalities hold because $\phi$ and $\phi_{\aa}$ are right $B$-module morphisms; the third and the last equalities use Lemma \ref{deltamodule};  Similar arguments prove $(2)$ and $(3)$.
\end{proof}

\section{The differential graded endomorphism algebra of the projective Leavitt complex}
\label{section5}

In this section, we prove that the opposite differential graded endomorphism algebra of the projective Leavitt complex of a finite quiver without sources is quasi-isomorphic to the Leavitt path algebra of the opposite quiver.  Here, the Leavitt path algebra is naturally $\Z$-graded and viewed as a differential graded algebra with trivial differential.

\subsection{The quasi-balanced dg bimodule}
\label{subsection51}

We first recall some notion and facts about quasi-balanced dg bimodules.
Let $A$ be a dg algebra and $X, Y$ be (left) dg $A$-modules.
We have a $\Z$-graded vector space ${\rm Hom}_{A}(X, Y)=\bigoplus_{n\in\Z}{\rm Hom}_{A}(X, Y)^{n}$
such that each component ${\rm Hom}_{A}(X, Y)^{n}$ consists of $k$-linear maps
$f:X\xra Y$ satisfying $f(X^i)\subseteq Y^{i+n}$ for all $i\in\Z$ and $f(a\cdot x)=(-1)^{n|a|}a
\cdot f(x)$ for all homogenous elements $a\in A$.
The differential on ${\rm Hom}_{A}(X, Y)$ sends $f\in
{\rm Hom}_{A}(X, Y)^{n}$ to $d_{Y}\circ f-(-1)^n f\circ d_{X}\in
{\rm Hom}_{A}(X, Y)^{n+1}$.
Furthermore, ${\rm End}_{A}(X):={\rm Hom}_{A}(X, X)$ becomes a dg algebra
with this differential and the usual composition as multiplication.
The dg algebra ${\rm End}_{A}(X)$ is usually called the \emph{dg endomorphism algebra} of the dg module $X$.

We denote by $A^{\rm opp}$ the \emph{opposite dg algebra} of a dg algebra $A$, that is,
$A^{\rm opp}=A$ as graded spaces with the same differential, and the multiplication $``\circ"$
on $A^{\rm opp}$ is given such that $a\circ b=(-1)^{|a||b|}ba$.

Let $B$ be another dg algebra.
Recall that a right dg $B$-module is a left dg $B^{\rm opp}$-module.
For a dg $A$-$B$-bimodule $X$, the canonical map
$A\xra {\rm End}_{B^{\rm opp}}(X)$ is a homomorphism of dg algebras, sending $a$ to $l_{a}$ with $ l_{a}(x)=a\cdot x$ for $a\in A$ and $x\in X$.
Similarly, the canonical map
$B\xra {\rm End}_{A}(X)^{\rm opp}$ is a homomorphism of dg algebras,
sending $b$ to $ r_{b}$ with $ r_{b}(x)=(-1)^{|b||x|}x\cdot b$ for homogenous elements $b\in B$ and $x\in X$.

A dg $A$-$B$-bimodule $X$ is called \emph{right quasi-balanced} provided that the canonical
homomorphism $B \xra {\rm End}_{A}(X)^{\rm opp}$ of dg algebras
is a quasi-isomorphism; see \cite[2.2]{bcy}.

We denote by $\K(A)$ the homotopy category of left dg $A$-modules and by $\D(A)$ the derived category
of left dg $A$-modules; they are both triangulated categories with arbitrary coproducts.
For a dg $A$-$B$-bimodule
$X$ and a left dg $A$-module $Y$, ${\rm Hom}_{A}(X, Y)$ has a natural structure of left dg $B$-module.

The following lemma is \cite[Proposition 2.2]{bcy}; compare \cite[4.3]{bke} and \cite[Appendix A]{bkr}.

\begin{lem} \label{equivproj} Let $X$ be a dg $A$-$B$-bimodule which is right quasi-balanced.
Recall that ${\rm Loc}\langle X\rangle\subseteq\K(A)$ is the smallest triangulated
subcategory of
$\K(A)$ which contains $X$ and is closed under arbitrary coproducts. Assume that
$X$ is a compact object in ${\rm Loc}\langle X\rangle$.
Then we have a triangle equivalence \begin{equation*}{\rm Hom}_{A}(X,-):{\rm Loc}\langle X\rangle \stackrel{\sim}\longrightarrow \D(B).\tag*{$\square$}\end{equation*}
\end{lem}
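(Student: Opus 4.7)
The plan is to invoke the standard dg tilting framework. Writing $F=\Hom_A(X,-)$, I obtain a triangulated functor $F\colon\K(A)\to \K(B)$, where the left dg $B$-module structure on $F(Y)$ comes from the right $B$-action on $X$; composing with the canonical localization $\K(B)\to \D(B)$ yields a triangulated functor $\bar F\colon\K(A)\to \D(B)$. First I would verify that $\bar F$ restricted to ${\rm Loc}\langle X\rangle$ preserves arbitrary coproducts: since $X$ is compact in ${\rm Loc}\langle X\rangle$, the canonical morphism $\bigoplus_i \Hom_A(X,Y_i)\to \Hom_A(X,\bigoplus_i Y_i)$ is a quasi-isomorphism for any family $\{Y_i\}$ in ${\rm Loc}\langle X\rangle$, its $n$-th cohomology being the bijection $\bigoplus_i \Hom_{\K(A)}(X,Y_i[n])\to \Hom_{\K(A)}(X,\bigoplus_i Y_i[n])$ supplied by compactness.

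Next, I would identify $\bar F(X)=\mathrm{End}_A(X)$ with $B$ inside $\D(B)$. The right quasi-balanced hypothesis is exactly that the canonical dg algebra morphism $B\to \mathrm{End}_A(X)^{\rm opp}$ is a quasi-isomorphism, and viewing $\mathrm{End}_A(X)$ as a left dg $B$-module through this opposite multiplication turns that morphism into a quasi-isomorphism of left dg $B$-modules. Hence $\bar F(X)\simeq B$ in $\D(B)$, and for every integer $n$ the induced map
\begin{equation*}
\Hom_{\K(A)}(X,X[n])=H^n\mathrm{End}_A(X)\xra H^n(B)=\Hom_{\D(B)}(B,B[n])
\end{equation*}
is the cohomology isomorphism of this quasi-isomorphism; so $\bar F$ is fully faithful on the single object $X$.

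Fully faithfulness on all of ${\rm Loc}\langle X\rangle$ will then follow by a two-step devissage. Fixing $Z=X$, the full subcategory of those $Y\in{\rm Loc}\langle X\rangle$ for which $\bar F$ induces isomorphisms on $\Hom(X,Y[n])$ for every $n$ contains $X$, is triangulated, and is closed under arbitrary coproducts (by the coproduct compatibility of $\bar F$ together with the compactness of $X$ in $\K(A)$ and of $B$ in $\D(B)$), so it exhausts ${\rm Loc}\langle X\rangle$. Then letting $Z$ vary, the analogous subcategory of $Z$'s is again triangulated and closed under coproducts, because $\Hom(-,Y[n])$ sends coproducts to products and a product of isomorphisms is an isomorphism; it contains $X$ by the first step, and therefore equals ${\rm Loc}\langle X\rangle$. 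Essential surjectivity is then automatic: $\bar F({\rm Loc}\langle X\rangle)$ is a localizing subcategory of $\D(B)$ containing $\bar F(X)\simeq B$, and $B$ generates $\D(B)={\rm Loc}\langle B\rangle$. The main obstacle I anticipate is careful bookkeeping of the left/right and ``opp'' conventions when realizing $\mathrm{End}_A(X)$ as a left dg $B$-module, and verifying that $\bar F$ indeed factors through the localization $\K(B)\to\D(B)$ in a way that respects coproducts; once those conventions are pinned down the remainder is the mechanical dg-tilting argument carried out in \cite{bcy}.
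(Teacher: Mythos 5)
Your proof is correct, but note that the paper itself offers no argument for this lemma: it is quoted as \cite[Proposition 2.2]{bcy}, with pointers to \cite[4.3]{bke} and \cite[Appendix A]{bkr}. What you have written out is, in substance, the standard dg-tilting argument that underlies that citation: coproduct preservation of $\Hom_{A}(X,-)$ on ${\rm Loc}\langle X\rangle$ deduced from compactness (the canonical map $\bigoplus_i\Hom_A(X,Y_i)\to\Hom_A(X,\bigoplus_i Y_i)$ is a quasi-isomorphism because its $n$-th cohomology is the compactness bijection), identification of $\bar F(X)=\Hom_A(X,X)$ with $B$ in $\D(B)$ via the right quasi-balanced hypothesis, a two-variable d\'evissage over localizing subcategories for full faithfulness, and essential surjectivity from $\D(B)={\rm Loc}\langle B\rangle$ together with the fact that the essential image of a full, coproduct-preserving triangle functor is localizing. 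Two minor points of bookkeeping, neither a gap: in your first d\'evissage step you invoke ``compactness of $X$ in $\K(A)$'', but the hypothesis (and all you need) is compactness of $X$ in ${\rm Loc}\langle X\rangle$; this suffices because ${\rm Loc}\langle X\rangle$ is closed under coproducts in $\K(A)$, so every coproduct you form stays inside it. And, as you yourself anticipate, the identification $\bar F(X)\simeq B$ requires checking that the left $B$-module structure on $\Hom_A(X,X)$ induced by the right $B$-action on $X$ agrees, up to the sign convention for $r_b$, with restriction along $\rho\colon B\to{\rm End}_A(X)^{\rm opp}$ of the regular module; with the paper's conventions this is a routine verification. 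So where the paper gives only a reference, your argument supplies a correct, self-contained proof.
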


In what follows, $Q$ is a finite quiver without sources and $A=kQ/J^{2}$ is the corresponding algebra with radical square zero.
Consider $A$ as a dg algebra concentrated in degree zero.
Recall that the Leavitt path algebra $B=L_k(Q^{\rm op})$ is naturally $\Z$-graded, and that it is viewed as
a dg algebra with trivial differential.

Recall from Proposition \ref{propbimproj} that the projective Leavitt complex $\P^{\bullet}$ is a dg $A$-$B$-bimodule. There is a connection between the projective Leavitt complex and the Leavitt path algebra, which is established by the following statement.

\begin{thm} \label{rightqproj} Let $Q$ be a finite quiver without sources.  Then the dg $A$-$B$-bimodule
$\mathcal{P}^{\bullet}$ is right quasi-balanded.

In particular, the opposite dg endomorphism algebra of the projective Leavitt complex of $Q$ is quasi-isomorphic to the Leavitt path algebra $L_k(Q^{\rm op})$ of $Q^{\rm op}$.  Here, $Q^{\rm op}$ is the opposite quiver of $Q$; $L_k(Q^{\rm op})$ is naturally $\Z$-graded and viewed as a dg algebra with trivial differential.
\end{thm}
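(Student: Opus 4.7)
The plan is to analyze the canonical dg algebra morphism $\Phi \colon B \to R$ with $R := \mathrm{End}_A(\mathcal{P}^{\bullet})^{\mathrm{opp}}$, sending $b$ to $r_b$. Since $B = L_k(Q^{\mathrm{op}})$ carries the trivial differential, $H^{\ast}(B) = B$, and it suffices to show that $\Phi$ induces an isomorphism of graded vector spaces $B \xrightarrow{\sim} H^{\ast}(R)$. Each $r_b$ is automatically a cocycle because $d(r_b) = r_{d_B(b)} = 0$; moreover $\Phi$ is injective, for if $r_b = 0$ then $\phi(b) = \phi(1)\cdot b = r_b(\phi(1)) = 0$, and the injectivity of $\phi \colon B \to \mathcal{P}^{\bullet}$ (established in Section~\ref{section4} via the bijection $\chi$) forces $b = 0$.

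The structural input is the decomposition $\mathcal{P}^{\bullet} = V \oplus W$ of graded $k$-spaces, with $V$ spanned by $\{e_{t(p)}\zeta_{(p,q)}\}$ and $W$ by $\{\alpha\zeta_{(p,q)} : s(\alpha) = t(p)\}$. From Definition~\ref{defproj} one reads $\delta(V) \subseteq W$ and $\delta(W) = 0$; combining with Lemma~\ref{deltaker} yields that $\delta$ restricts to a bijection $V \cap \mathcal{P}^l \xrightarrow{\sim} W \cap \mathcal{P}^{l+1}$ for every $l$. Since $\mathrm{rad}(A)^2 = 0$, any $A$-linear $f$ satisfies $f(W) \subseteq W$ with $f|_W$ determined by $f|_V$; writing $f|_V = f_V + f_W$ according to $V \oplus W$, the cocycle condition $\delta f = (-1)^n f\delta$ reduces to a single relation involving $f_V$ alone, so that $f_W$ remains cocycle-unconstrained, while every coboundary $d(g)$ has trivial $V$-component and $W$-component equal to $\delta(g_V(v)) - (-1)^{n-1}\, g(\delta v)$.

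Given a cocycle $f$ of degree $n$, I would define $b \in B^n$ by reading off the coefficients of $f_V(\phi(1)) = \sum_{(p',q')} \lambda_{(p',q')} e_{t(p')} \zeta_{(p',q')}$ and setting $b := \sum \lambda_{(p',q')}(p'^{\mathrm{op}})^{\ast} q'^{\mathrm{op}}$, so that $r_b(\phi(1)) = \phi(b) = f_V(\phi(1))$. Setting $h := (f - r_b)_V$, one has $h(\phi(1)) = 0$, and $h$ is again a cocycle, satisfying the two recursions $\delta(h(v)) = (-1)^n \beta_0\, h(u)$ when $v$ corresponds to $(p,q)$ with $l(p) \geq 1$ and $u$ to $(\widehat{p}, q)$, and $\delta(h(v)) = (-1)^n \sum_\beta \beta\, h(u_\beta)$ when $l(p) = 0$ and $u_\beta$ corresponds to $(e_{s(\beta)}, q\beta)$. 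Using the injectivity of $\delta|_V$ together with the disjointness of $\beta V \subset W$ for distinct arrows $\beta$, one propagates the vanishing of $h$ from $\phi(1)$ first to all pairs with $l(p) = 0$ (inductively in $l(q)$ via the second recursion), then to all pairs with $l(p) \geq 1$ (inductively in $l(p)$ via the first), concluding $(f - r_b)_V = 0$.

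It remains to realize the residual $V \to W$ part of $f - r_b$ as a coboundary. Using the bijectivity of $\delta \colon V \to W$, this amounts to solving
\[
\delta(g_V(v)) = (f - r_b)_W(v) + (-1)^{n-1}\, g(\delta v)
\]
for a degree-$(n-1)$ map $g_V \colon V \to V$. The equation couples $g_V$ at $(p,q)$ to its values at $(\widehat{p}, q)$ when $l(p) \geq 1$ and at $(e_{s(\beta)}, q\beta)$ when $l(p) = 0$. The main obstacle lies here: one must build the homotopy consistently across this infinite indexing. A natural approach is to first settle the part supported on $\mathcal{K}^{\bullet}$ (pairs with $l(p) = 0$), where solving the recursion requires freely fixing asymptotic data in $l(q)$, and then extend to $\mathcal{C}^{\bullet}$ by inducting on $l(p)$ within the subcomplex decomposition $\mathcal{C}^{\bullet} = \bigoplus_{n \geq 0} \mathcal{C}^{\bullet}_n$ from Proposition~\ref{cokcop}.
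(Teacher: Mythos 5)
Your overall strategy---show that the canonical map $\rho\colon B\to \mathrm{End}_A(\mathcal{P}^{\bullet})^{\mathrm{opp}}$ induces an isomorphism on cohomology, by proving injectivity and then matching an arbitrary cocycle with some $r_b$ up to coboundary---is the same as the paper's, and your middle step is sound: the $V\oplus W$ bookkeeping is an equivalent repackaging of the paper's coefficients $y_{(p,q)}$ and $\mu^{\gamma}_{(p,q)}$ from \eqref{eq:nu}, and your propagation argument showing $(f-r_b)_V=0$ is essentially Lemma \ref{ruleproj}. But there are two gaps. The first is minor: you prove that $\rho$ itself is injective, whereas what is needed is injectivity of the induced map on cohomology, i.e.\ that $r_b$ being a \emph{coboundary} forces $b=0$. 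Your own observation that every coboundary has trivial $V$-component closes this at once (if $r_b=d(g)$, then $\phi(b)=r_b(\phi(1))$ lies in $V$ and must therefore vanish, whence $b=0$ by injectivity of $\phi$); this is in fact more elementary than the paper's route through Tomforde's graded uniqueness theorem (Lemma \ref{uniquethmproj} combined with Lemma \ref{coboundaryproj}), but as written you have not made the argument.

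The second gap is the one you flag yourself, and it is genuine: the homotopy is never constructed, and the hint you give---``freely fixing asymptotic data in $l(q)$''---points in the wrong direction. For pairs with $l(p)=0$, the equation at $v=e_{s(q)}\zeta_{(e_{s(q)},q)}$ couples $g_V(v)$ to the values $g_V(u_\beta)$ at the pairs $(e_{s(\beta)},q\beta)$; running this recursion from long $q$ down to short $q$ is not well-founded, and no ``data at infinity'' can start it. The correct direction is forward: since $\delta(g_V(v))-(f-r_b)_W(v)$ lies in $e_{s(q)}W=\bigoplus_{\{\beta\;|\;t(\beta)=s(q)\}}\beta\cdot(e_{s(\beta)}V)$ and left multiplication by $\beta$ is injective on $e_{s(\beta)}V$ (the same disjointness you exploited in the propagation step), the single equation at $(e_{s(q)},q)$ \emph{determines} all the children values $g_V(u_\beta)$ from $g_V(v)$. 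Hence the only free choices are at the finitely many roots $(e_i,e_i)$; fixing those arbitrarily and inducting upward on $l(q)$ settles all $l(p)=0$ pairs with every such equation holding by construction, after which bijectivity of $\delta|_V$ settles the pairs with $l(p)\geq 1$ by induction on $l(p)$. This is exactly what the paper does, in Leavitt-path-algebra coordinates: the recursions \eqref{eq:ac1} and \eqref{eq:ac2} define elements $\theta_{(p,q)}$ (children from parents when $l(p)=0$, and solving for $\theta_{(p,q)}$ via the Cuntz--Krieger relations when $l(p)\geq 1$), and Lemma \ref{last} then verifies $y-\rho(x)=\delta\circ h-(-1)^{n-1}h\circ\delta$ for the resulting map $h$. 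Until this construction (or an equivalent one) is carried out, the surjectivity of $H(\rho)$, and hence the theorem, is not proved.
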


We will prove Theorem \ref{rightqproj} in subsection \ref{subsection52}.
The following equivalence was given by \cite[Theorem 6.2]{bcy}. 

\begin{cor} Let $Q$ be a finite quiver without sources. Then there is a triangle
equivalence $${\rm Hom}_{A}(\P^{\bullet},-):\K_{\rm ac}(A\-\Proj) \stackrel{\sim}\longrightarrow\D(B)$$
such that ${\rm Hom}_{A}(\P^{\bullet}, \P^{\bullet})\cong B$ in $\D(B)$.
\end{cor}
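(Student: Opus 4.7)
The plan is to show directly that the canonical dg algebra homomorphism
$$\Phi\colon B \longrightarrow \operatorname{End}_A(\mathcal{P}^\bullet)^{\operatorname{opp}}, \qquad b \longmapsto r_b,$$
is a quasi-isomorphism. Since $B$ carries the trivial differential, $H^{\ast}(B)=B$, so it suffices to show that $\Phi$ is injective in each degree and that every cocycle in $\operatorname{End}_A(\mathcal{P}^\bullet)^{\operatorname{opp}}$ is cohomologous to an element of $\Phi(B)$.

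First I would prove that $\Phi$ is injective by evaluating on the distinguished element $\xi := \sum_{i \in Q_0} e_i \zeta_{(e_i,e_i)} \in \mathcal{P}^0$. Writing $b \in B^n$ in the canonical basis $\Gamma$ of Lemma \ref{lbasisproj} as $b = \sum_j \lambda_j (p_j^{\operatorname{op}})^{\ast} q_j^{\operatorname{op}}$, Lemma \ref{lmodulep}(1) identifies
$$r_b(\xi) \;=\; \sum_j \lambda_j\, e_{t(p_j)}\zeta_{(p_j,q_j)}.$$
The terms on the right are distinct basis elements of $\mathcal{P}^n$, so $r_b(\xi)=0$ forces every $\lambda_j = 0$. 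Hence $\Phi$ is injective in every degree.

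The second, and main, step is to identify $H^n(\operatorname{End}_A(\mathcal{P}^\bullet)^{\operatorname{opp}})$ with $B^n$ via $\Phi$. The crucial input is the pair of graded right $B$-module maps $\phi$ and $\phi_\beta$ ($\beta \in Q_1$) constructed after Proposition \ref{propbimproj}: these exhibit $\mathcal{P}^\bullet$ as a graded right $B$-module generated by $\xi$ and the $\beta\zeta_{(e_{s(\beta)},e_{s(\beta)})}$. Given a cocycle $f \in Z^n(\operatorname{End}_A(\mathcal{P}^\bullet)^{\operatorname{opp}})$, I would expand
$$f(\xi) \;=\; \xi \cdot b \;+\; \sum_{\beta\in Q_1} \beta\zeta_{(e_{s(\beta)},e_{s(\beta)})}\cdot c_\beta$$
uniquely with $b \in B^n$ and $c_\beta \in e_{s(\beta)}B^{n+1}$. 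The cocycle equation $d_{\mathcal{P}} \circ f = (-1)^n f \circ d_{\mathcal{P}}$ combined with the identity $d\circ\phi = \sum_\alpha \phi_\alpha(\alpha^{\operatorname{op}}\cdot -)$ of Lemma \ref{deltamodule} should translate into relations on the $c_\beta$ forcing them to be expressible as boundaries, yielding explicit formulas for a homotopy $h$ on the generators $\xi$ and $\beta\zeta_{(e_{s(\beta)},e_{s(\beta)})}$. Extending $h$ to all of $\mathcal{P}^\bullet$ by $A$-linearity and verifying $f - r_b = d_E h$ (where $d_E$ is the differential on $\operatorname{End}_A(\mathcal{P}^\bullet)^{\operatorname{opp}}$) would finish the surjectivity argument; injectivity on cohomology then comes as a byproduct, since a putative identity $r_b = d_E h$ with $b\ne 0$ would, upon evaluation at $\xi$, contradict Step~1.

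The main obstacle will be the explicit construction and verification of the homotopy $h$. The complexity comes from the three-case description of the right $B$-action in formulas \eqref{actionproj2}--\eqref{actionproj3}, in particular the correction term $-\sum_{\beta\in T(\alpha)} x\zeta_{(\widetilde p\beta,\beta)}$ that appears when acting by an associated arrow, and from having to verify simultaneously that $h$ is $A$-linear, of degree $n-1$, and satisfies the homotopy equation on each summand $P_i \zeta_{(p,q)}$ of $\mathcal{P}^\bullet$. Once this is done, all remaining pieces---injectivity of $\Phi$, the bookkeeping of cocycles in terms of their values on $\xi$, and the compatibility of multiplication on both sides of $\Phi$---follow essentially formally from the identities of Section~\ref{section4}.
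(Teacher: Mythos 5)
There is a genuine gap: your proposal proves (in outline) the wrong statement. What you sketch --- that the canonical map $B \to {\rm End}_A(\P^{\bu})^{\rm opp}$, $b\mapsto r_b$, is a quasi-isomorphism --- is precisely Theorem \ref{rightqproj} (the right quasi-balanced property), which the paper has already established and which the Corollary simply cites. The Corollary's main assertion is the triangle equivalence ${\rm Hom}_{A}(\P^{\bullet},-):\K_{\rm ac}(A\-\Proj) \stackrel{\sim}\longrightarrow\D(B)$, and this does \emph{not} follow formally from the quasi-isomorphism alone. Two further inputs are indispensable and never appear in your proposal: first, Theorem \ref{tcproj}, which says that $\P^{\bu}$ is a compact generator of $\K_{\rm ac}(A\-\Proj)$, so that $\K_{\rm ac}(A\-\Proj)={\rm Loc}\langle\P^{\bu}\rangle$ and $\P^{\bu}$ is compact in it (its proof is the content of Section \ref{section3}, via the subcomplex $\CK^{\bu}$ and the decomposition of the cokernel $\C^{\bu}=\bigoplus_{n\geq 0}\C^{\bu}_n$); second, the dg Morita-type result Lemma \ref{equivproj} (\cite[Proposition 2.2]{bcy}), which converts ``right quasi-balanced $+$ compact in ${\rm Loc}\langle X\rangle$'' into the equivalence ${\rm Hom}_A(X,-)\colon {\rm Loc}\langle X\rangle\stackrel{\sim}\longrightarrow \D(B)$. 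Without compact generation, knowing $B\simeq {\rm End}_A(\P^{\bu})^{\rm opp}$ says nothing about the functor ${\rm Hom}_A(\P^{\bu},-)$ being essentially surjective or fully faithful on all of $\K_{\rm ac}(A\-\Proj)$; the identification of that whole homotopy category with the localizing subcategory generated by $\P^{\bu}$ is exactly where Theorem \ref{tcproj} enters.

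Two smaller remarks. The part of the Corollary your argument does reach is the secondary claim ${\rm Hom}_A(\P^{\bu},\P^{\bu})\cong B$ in $\D(B)$, since the canonical map $\rho$ is a morphism of left dg $B$-modules and a quasi-isomorphism. But even there you are re-deriving Theorem \ref{rightqproj} rather than citing it, and your sketch of the surjectivity step (``should translate into relations on the $c_\beta$ forcing them to be expressible as boundaries'') is exactly the hard technical core that the paper spends Lemmas \ref{coboundaryproj}--\ref{last} on (the elements $y_{(p,q)}$, $\mu^{\gamma}_{(p,q)}$, the element $x=\sum_j y_{(e_j,e_j)}$, and the homotopy built from the $\theta_{(p,q)}$); asserting it ``follows essentially formally'' understates what must be checked. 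In short: replace the body of your argument by a citation of Theorem \ref{rightqproj}, and supply the genuinely missing steps --- Theorem \ref{tcproj} and Lemma \ref{equivproj} --- which together yield the equivalence.
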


\begin{proof} Recall from Theorem \ref{tcproj} that $\K_{\rm ac}(A\-\Proj)={\rm Loc}\langle \P^{\bullet}\rangle$. Then the triangle equivalence follows from Theorem \ref{rightqproj} and Lemma \ref{equivproj}. The canonical map $B\xra {\rm End}_A(\P^{\bu})^{\rm opp}$ is a quasi-isomorphism, which identifies ${\rm Hom}_{A}(\P^{\bullet}, \P^{\bullet})$ with $B$ in $\D(B)$.
\end{proof}

\subsection{The proof of Theroem \ref{rightqproj}}
\label{subsection52}

In this subsection, we follow the notation in subsection \ref{subsection51}.

\begin{lem} $($\cite[Theorem 4.8]{bt}$)$ \label{uniquethmproj}
Let $A$ be a $\Z$-graded algebra and $\eta:L_k(Q)\xra A$ be a graded algebra homomorphism such that 
$\eta(e_{i})\neq 0$ for all $i\in Q_{0}$. Then $\eta$ is injective.\hfill $\square$
\end{lem}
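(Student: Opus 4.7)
The plan is to argue $\ker\eta=0$ by contradiction. Since $\eta$ is a homomorphism of $\Z$-graded algebras (the grading on $L_k(Q)$ being by path length, so $|\alpha|=1$, $|\alpha^{*}|=-1$, $|e_i|=0$), the ideal $I:=\ker\eta$ is a graded two-sided ideal of $L_k(Q)$. Hence it suffices to rule out the existence of any nonzero homogeneous element of $I$.

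I would then run the standard ``graded uniqueness'' reduction: show that any nonzero graded ideal of $L_k(Q)$ must contain some vertex idempotent $e_i$. Suppose $x\in I$ is nonzero and homogeneous. By Lemma \ref{lbasisproj}, write $x$ uniquely in normal form as $x=\sum_{j=1}^{n}\lambda_j p_j^{*} q_j$ with $\lambda_j\in k\setminus\{0\}$, the pairs $(p_j,q_j)$ pairwise distinct, and $l(q_j)-l(p_j)=|x|$ for every $j$; choose $x$ with $n$ minimal. Exploit the Cuntz--Krieger relations: multiplying on the right by $q_1^{*}$ and on the left by $p_1$ yields $p_1 x q_1^{*}\in I$, and repeated application of relation (3) together with the identities $p_1 p_1^{*}=e_{t(p_1)}$ and $q_1 q_1^{*}=e_{t(q_1)}$ kills all summands whose $(p_j,q_j)$ is incomparable as prefix with $(p_1,q_1)$, and transforms the $j=1$ summand into $\lambda_1 e_{t(p_1)}$. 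Invoking minimality of $n$, one deduces that after this manipulation $I$ contains a nonzero element of the form $\lambda e_i$ with $\lambda\in k\setminus\{0\}$ and $i\in Q_0$. Then $\eta(\lambda e_i)=0$ forces $\eta(e_i)=0$, contradicting the hypothesis.

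The main obstacle is the complexity-reduction step: relation (4) can inject new monomials when one multiplies by paths or ghost paths, so it is not immediate that $p_1 x q_1^{*}$ is strictly simpler than $x$ in any naive sense. The remedy, which is essentially the content of the argument in \cite{bt}, is to equip the normal-form pairs $(p,q)$ with a suitable well-ordering (for example lexicographic on the underlying paths, refined by total length $l(p)+l(q)$), and to verify that at each iteration the leading term under this ordering strictly decreases. This guarantees termination in finitely many steps at an element of the form $\lambda e_i$, completing the contradiction.
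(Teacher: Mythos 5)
First, a point of comparison: the paper offers no proof of this lemma at all; it is quoted directly from \cite[Theorem 4.8]{bt} (hence the box after the statement), so your proposal has to be judged against the argument in that reference rather than against anything in the paper. Your reduction is the standard one and is correct as far as it goes: since $\eta$ is graded, $\ker\eta$ is a graded two-sided ideal, so it suffices to show that every nonzero graded ideal $I$ of $L_k(Q)$ contains a nonzero scalar multiple of some vertex idempotent, which contradicts $\eta(e_i)\neq 0$.

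However, the mechanism you propose for this key step has a genuine gap, and it cannot be repaired by a well-ordering on normal-form pairs. In the paper's conventions, if you choose the normal-form term $(p_1,q_1)$ of maximal total length $l(p_1)+l(q_1)$ in a homogeneous $x$, then every summand of $p_1xq_1^*$ surviving relation (3) already collapses to a multiple of $e_{t(p_1)}$: a surviving index $j$ has $p_1=r_jp_j$ and $q_1=u_jq_j$ with $l(r_j)=l(u_j)$ (by homogeneity and maximality), and then $p_1p_j^*q_jq_1^*=r_ju_j^*=\delta_{r_j,u_j}\,e_{t(p_1)}$. So no iteration or decreasing induction is needed at all; the \emph{entire} difficulty is whether the resulting scalar is nonzero, and it can vanish. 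Concretely, let $Q$ have one vertex and two loops $\alpha,\beta$ with $\alpha$ special, and let $x=\alpha^*\alpha$; its normal form under Lemma \ref{lbasisproj} is $e_1-\beta^*\beta$. Multiplying by the longest term gives $\beta x\beta^*=e_1-e_1=0$, and multiplying by the other term gives back $x$; your procedure stalls, even though the conclusion holds for this $x$ (indeed $\alpha x\alpha^*=e_1$). The successful multiplier $\alpha$ does not occur in the normal form of $x$, so no ordering of the normal-form terms can locate it, and minimality of the number of terms gives no contradiction from $p_1xq_1^*=0$. The actual proofs get around this differently: one first multiplies by a suitable ghost path (an initial segment of one of the $q_j$) to produce a nonzero element of the degree-zero component --- this step does work with the basis, as you suggest --- and then uses that the degree-zero component is an increasing union of (finite sums of) matrix algebras whose matrix units are the elements $p^*q$ with $l(p)=l(q)$ and $t(p)=t(q)$: a nonzero ideal meets a full matrix block, giving some $p^*p\in I$ and hence $e_{t(p)}=p(p^*p)p^*\in I$. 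That argument requires two-sided multiplication by mixed monomials such as $\gamma^*\delta$, which your recipe of multiplying only by real and ghost paths taken from the element's own normal form never produces.
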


Let $Z^{n}$ and $C^{n}$ denote the $n$-th cocycle and
coboundary of the dg algebra ${\rm End}_{A}(\mathcal{P}^{\bullet})^{\rm opp}$. We have the following observation:

\begin{lem}\label{coboundaryproj} Any element $f:\mathcal{P}^{\bullet}\longrightarrow \mathcal{P}^{\bullet}$ in $C^{n}$
satisfies $f(\mathcal{P}^{l})\subseteq \Ke\delta^{n+l}$ for each integer $l$ and $n$.
\end{lem}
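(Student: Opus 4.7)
The plan is to exploit two structural features: the $A$-linearity of morphisms in ${\rm End}_A(\P^\bullet)^{\rm opp}$, and the vanishing of products of arrows in $A = kQ/J^2$.

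By definition, every $f \in C^n$ has the form $f = d(g) = \d \circ g - (-1)^{n-1}\, g \circ \d$ for some $g \in {\rm End}_A(\P^\bullet)^{n-1}$, since the opposite dg algebra shares the same differential. For a fixed $x \in \P^l$, applying $\d^{l+n}$ gives
\begin{equation*}
\d^{l+n}(f(x)) = \d^{l+n}\bigl(\d^{l+n-1}(g(x))\bigr) - (-1)^{n-1}\d^{l+n}\bigl(g(\d^l(x))\bigr).
\end{equation*}
The first summand vanishes because $\P^\bullet$ is a complex, so it suffices to show $\d^{l+n}(g(\d^l(x))) = 0$.

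Next I would invoke the explicit description of the differential in Definition \ref{defproj} (alternatively Lemma \ref{deltaker}): for any $y \in \P^k$, the element $\d^k(y)$ is a $k$-linear combination of basis vectors of the form $\gamma\z_{(\cdot,\cdot)}$ with $\gamma \in Q_1$. In particular, $\d^l(x)$ is such a sum, and for each summand $\aa\z_{(p,q)}$ I would use the factorization $\aa\z_{(p,q)} = \aa\cdot(e_{s(\aa)}\z_{(p,q)})$ in $\P^{l+1}$ together with the $A$-linearity of $g$ to write
\begin{equation*}
g(\aa\z_{(p,q)}) = \aa\cdot g(e_{s(\aa)}\z_{(p,q)}).
\end{equation*}

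Finally I would apply the $A$-linear map $\d^{l+n}$. By the same description, $\d^{l+n}(g(e_{s(\aa)}\z_{(p,q)}))$ is a combination of basis vectors $\gamma\z_{(\cdot,\cdot)}$ with $\gamma \in Q_1$, so each resulting summand becomes $\aa\cdot\gamma\z_{(\cdot,\cdot)} = (\aa\gamma)\z_{(\cdot,\cdot)} = 0$ in $A$, since $\aa\gamma \in J^2 = 0$. Therefore $\d^{l+n}(g(\d^l(x))) = 0$ and $f(x)\in \Ke\d^{l+n}$. The proof presents no real obstacle; the conceptual crux is recognizing that the image of any differential of $\P^\bullet$ lies in $\mathrm{rad}(A)\cdot\P^{l+1}$, and a further multiplication by an arrow annihilates it via $J^2 = 0$.
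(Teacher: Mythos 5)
Your proof is correct, and it follows the paper's skeleton up to the key step: both write $f=\d\circ g-(-1)^{n-1}g\circ \d$ for some $g$ of degree $n-1$, kill the summand involving $\d\circ\d$ outright, and are left with showing that $\d^{n+l}\circ g\circ \d^{l}=0$. The two arguments then diverge. The paper reformulates this as the statement that $g$ maps $\Ke\d^{l+1}$ into $\Ke\d^{n+l}$, and proves it by combining Lemma \ref{deltaker} (the kernel has a $k$-basis consisting of the arrow elements $\aa\z_{(p,q)}$) with Lemma \ref{lemma-projmap} (any $A$-module map $P_i\to P_j$ sends an arrow to a scalar multiple of itself); hence $g$ sends arrow elements to linear combinations of arrow elements, which $\d$ annihilates. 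You work instead with the image of $\d^{l}$: each summand of $\d^{l}(x)$ is an arrow element, which you factor as $\aa\z_{(p,q)}=\aa\cdot(e_{s(\aa)}\z_{(p,q)})$, and you pull $\aa$ through $g$ and through $\d^{n+l}$ using $A$-linearity (valid here because $A$ is concentrated in degree zero, so the Koszul sign $(-1)^{n|a|}$ is trivial, and because the differentials are $A$-module morphisms). This lands you in $\aa\cdot \Im\d^{n+l}\subseteq \aa\cdot \rad(A)\cdot\P^{n+l+1}=0$, by $J^{2}=0$. The net effect is that you replace the appeal to Lemma \ref{lemma-projmap} by the more elementary observation $\Im\d\subseteq \rad(A)\cdot \P^{\bullet}$ (immediate from Definition \ref{defproj}) together with the radical-square-zero hypothesis; the paper's route passes through the kernel rather than the radical, and yields along the way the slightly stronger fact that every graded $A$-linear endomorphism of $\P^{\bullet}$ preserves the kernels of the differentials. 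Both mechanisms are sound and of comparable length.
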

\begin{proof} For any $f\in C^{n}$ there exists
$h=(h^l)_{l\in\Z}\in {\rm End}_{A}(\mathcal{P}^{\bullet})^{\rm opp}$ such that $f^l=\delta^{n+l-1}\circ h^l-(-1)^{n-1}h^{l+1}\circ\delta^{l}$ for each $l\in\Z$.
By Proposition \ref{projacy} we have $\Im \d^{n+l-1}\subseteq \Ke\d^{n+l}$ and $\Im \d^l\subseteq \Ke\d^{l+1}$. Then it suffices to prove $h(\Ke\delta^{l+1})\subseteq \Ke\delta^{n+l}$. Recall from Lemma \ref{deltaker} that $\{\alpha\zeta_{(p, q)}\;|\; i\in Q_{0}, (p, q)\in\mathbf{\La}^{l+1}_{i} \text{~and~}\aa\in Q_1 \text{~with~}s(\aa)=i \}$ is a $k$-basis of $\Ke\d^{l+1}$. By Lemma \ref{lemma-projmap}, we are done.
\end{proof}

Recall that the projective Leavitt complex $\P^{\bullet}$ is a dg $A$-$B$-bimodule; see Proposition \ref{propbimproj}.
Let $\rho: B\xra {\rm End}_{A}(\mathcal{P}^{\bullet})^{\rm opp}$ be the canonical map which is
induced by the right $B$-action. We have that $\rho(L_k(Q^{\rm op})^{n})\subseteq Z^{n}$ for each $n\in \Z$, since $B$ is a dg algebra with trivial differential. Taking cohomologies, the following graded algebra homomorphism is obtained
\begin{equation}
\label{ccc}
H(\rho):
B\longrightarrow H({\rm End}_{A}(\mathcal{P}^{\bullet})^{\rm opp}).
\end{equation}

\begin{lem} \label{embeddingproj} The graded algebra homomorphism $H(\rho)$ is an embedding.
\end{lem}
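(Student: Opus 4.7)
The plan is to apply Tomforde's graded uniqueness theorem (Lemma~\ref{uniquethmproj}) to the graded algebra homomorphism $H(\rho)$. This reduces the injectivity of $H(\rho)$ to the single condition $H(\rho)(e_i)\neq 0$ for every vertex $i\in Q_0$.

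First I would verify that $H(\rho)$ is indeed a graded algebra homomorphism defined on all of $L_k(Q^{\rm op})$. This is essentially already observed in the paragraph preceding the lemma: $\rho$ is a homomorphism of dg algebras (coming from the right $B$-action that makes $\mathcal{P}^{\bullet}$ a dg $A$-$B$-bimodule by Proposition~\ref{propbimproj}), and since $B$ carries the trivial differential we have $\rho(B^n)\subseteq Z^n$ for every $n$, so passing to cohomology in degree $n$ gives a well-defined graded map $H(\rho)\colon B\xra H(\mathrm{End}_A(\mathcal{P}^{\bullet})^{\rm opp})$ which is multiplicative.

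Next I would show $H(\rho)(e_i)\neq 0$ for each $i\in Q_0$. Assume for contradiction that $\rho(e_i)\in C^0$. By Lemma~\ref{coboundaryproj}, any such coboundary must satisfy $\rho(e_i)(\mathcal{P}^l)\subseteq \Ke \delta^{l}$ for all $l$. To contradict this I would exhibit a single test element on which this fails: take $e_i\zeta_{(e_i,e_i)}\in \mathcal{P}^0$. From the definition of the right action in \eqref{actionproj1} we have
\[
\rho(e_i)\bigl(e_i\zeta_{(e_i,e_i)}\bigr)=e_i\zeta_{(e_i,e_i)}\cdot e_i=e_i\zeta_{(e_i,e_i)}.
\]
Now I would compute $\delta^0$ on this element. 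Since $l(e_i)=0$, Definition~\ref{defproj} gives
\[
\delta^0\bigl(e_i\zeta_{(e_i,e_i)}\bigr)=\sum_{\{\beta\in Q_1\mid t(\beta)=i\}}\beta\zeta_{(e_{s(\beta)},\beta)}.
\]
Because $Q$ has no sources, the indexing set is non-empty, and the summands are linearly independent basis elements of $\mathcal{P}^1$, so $\delta^0(e_i\zeta_{(e_i,e_i)})\neq 0$. Hence $\rho(e_i)(e_i\zeta_{(e_i,e_i)})\notin \Ke\delta^0$, contradicting Lemma~\ref{coboundaryproj}.

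Therefore $\rho(e_i)\notin C^0$, so $H(\rho)(e_i)\neq 0$ for every vertex. Lemma~\ref{uniquethmproj} applied to the graded homomorphism $H(\rho)$ then yields injectivity. There is no serious obstacle in this argument; the only subtlety is choosing a test element whose image under $\delta^0$ witnesses non-triviality, and the hypothesis that $Q$ has no sources (which guarantees the sum defining $\delta^0(e_i\zeta_{(e_i,e_i)})$ is non-empty) is exactly what makes this test succeed.
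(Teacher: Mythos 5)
Your proof is correct and follows essentially the same route as the paper: reduce to $H(\rho)(e_i)\neq 0$ via Lemma~\ref{uniquethmproj}, then test $\rho(e_i)$ on the element $e_i\zeta_{(e_i,e_i)}$ and rule out membership in $C^0$ using Lemma~\ref{coboundaryproj}. The only cosmetic difference is that you verify $e_i\zeta_{(e_i,e_i)}\notin\Ke\delta^0$ by computing $\delta^0(e_i\zeta_{(e_i,e_i)})$ directly from Definition~\ref{defproj} (using that $Q$ has no sources), whereas the paper cites the basis of $\Ke\delta^0$ from Lemma~\ref{deltaker}; both are valid.
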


\begin{proof} By Lemma \ref{uniquethmproj}, it suffices to prove that $H(\rho)(e_i)\neq 0$ for all $i\in Q_0$.
For each vertex $i\in Q_0$, $H(\rho)(e_i)(e_i\zeta_{(e_i, e_{i})})=e_i\zeta_{(e_i, e_{i})}$. By Lemma \ref{deltaker}
we have $e_i\zeta_{(e_i, e_{i})}\notin \Ke\d^{0}$.
By Lemma \ref{coboundaryproj}, $H(\rho)(e_i)\notin C^0$. This implies $H(\rho)(e_i)\neq 0$.
\end{proof}

It remains to prove that the graded algebra homomorphism $H(\rho)$ is surjective.
For each element $y\in Z^n$, we will find an element $x\in B^n$ such that $y-\rho(x)\in C^n$.

In what follows, we fix $y\in Z^{n}$ for $n\in\Z$. Then $y\in Z^{n}$ implies $\delta^{\bu}\circ y-(-1)^{n}y\circ\delta^{\bu}=0$. Recall that $\P^l=\bigoplus_{i\in Q_0}P_i^{(\La^l_i)}$ for each $l\in\Z$. The set $\{e_i\z_{(p, q)}, \aa\z_{(p, q)}\; |\; i\in Q_0, (p, q)\in \La^l_i \text{~and~} \aa\in Q_1 \text{~with~} s(\aa)=i\}$ is a $k$-basis of $\P^l$. For each $i\in Q_{0}$, $l\in\Z$, 
and $(p, q)\in\mathbf{\Lambda}^{l}_{i}$, we have
\begin{equation}
\label{eq:p}
\begin{cases}
(\delta^{n+l}\circ y)(e_{i}\zeta_{(p, q)})=(-1)^{n}(y\circ \delta^l)(e_{i}\zeta_{(p, q)}) & \\
(\delta^{n+l}\circ y)(\alpha\zeta_{(p, q)})=0,&
\end{cases}
\end{equation} where $\alpha \in Q_{1}$ with $s(\aa)=i$.

Observe that $y$ is an $A$-module morphism. By Lemma \ref{lemma-projmap}, we may assume that
\begin{equation}
\label{eq:nu}
\begin{cases}
y(e_{i}\zeta_{(p, q)})=\phi(y_{(p, q)})+\sum_{\{\gamma\in Q_{1}\;|\; t(\g)=i\}}\phi_{\gamma}(\mu^{\gamma}_{(p, q)})&\\
y(\alpha\zeta_{(p, q)})=\phi_{\alpha}(y_{(p, q)}),
\end{cases}
\end{equation} where $y_{(p, q)}\in e_iL_k(Q^{\rm op})^{n+l}$ and
$\mu^{\gamma}_{(p, q)}\in e_{s(\g)}L_k(Q^{\rm op})^{n+l}$.

By \eqref{eq:nu} and Lemma \ref{deltamodule}, we have that $$(\delta^{n+l}\circ y)(e_{i}\zeta_{(p, q)})=\d^{n+l}(\phi(y_{(p, q)}))=\sum_{\{\g\in Q_1\;|\; t(\g)=i\}}\phi_{\g}(\g^{\rm op}y_{(p, q)})$$ and that
$$(y\circ \delta^{l})(e_{i}\zeta_{(p, q)})=y(\sum_{\{\g\in Q_1\;|\; t(\g)=i\}}\phi_{\g}(\g^{\rm op}(p^{\rm op})^*q^{\rm op})).$$
By $(\ref{eq:p})$, we have \begin{equation}
\label{eq:bothside}
\sum_{\{\g\in Q_1\;|\;  t(\g)=i\}}\phi_{\g}(\g^{\rm op}y_{(p, q)})
=(-1)^ny(\sum_{\{\g\in Q_1\;|\; t(\g)=i\}}\phi_{\g}(\g^{\rm op}(p^{\rm op})^*q^{\rm op})).
\end{equation}

We recall that for each arrow $\b\in Q_1$, the restriction of $\phi_{\b}$ to $e_{s(\b)}B$ is injective. If $l(p)=0$, then by (\ref{eq:bothside}) and \eqref{eq:nu} for any $\gamma\in Q_{1}$ with $t(\g)=i$ we have 
\begin{equation}
\label{eq:x1}
y_{(e_{s(\gamma)}, q\gamma)}=(-1)^{n}\gamma^{\rm op} y_{(p, q)}.
\end{equation} If $l(p)>0$, write $p=a\widehat{p}$ with $a\in Q_1$ and $t(a)=i$. By (\ref{eq:bothside}) and \eqref{eq:nu} we have
$a^{\rm op}y_{(p, q)}=(-1)^{n}y_{(\widehat{p}, q)}$ and $\gamma^{\rm op} y_{(p, q)}=0$ for $\gamma\in Q_1$ with 
$t(\g)=i$ and
$\gamma\neq a$. The following equality holds:
\begin{equation}
\label{eq:x2}
y_{(p, q)}=\sum_{\{\g\in Q_1\;|\; t(\g)=i\}} (\g^{\rm op})^*\g^{\rm op} y_{(p,q)}=(-1)^{n}(a^{\rm op})^{*} y_{(\widehat{p}, q)}.
\end{equation}

\begin{lem} \label{ruleproj} Keep the notation as above. Take $x=\sum_{j\in Q_{0}}y_{(e_{j}, e_{j})}\in L_k(Q^{\rm op})^{n}$. Then $y_{(p, q)}=(-1)^{nl}(p^{\rm op})^{*}q^{\rm op} x$ in $L_k(Q^{\rm op})$ for each $(p,q)\in \La^l_i$.
\end{lem}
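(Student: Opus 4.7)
The plan is to establish the formula by double induction, first on $l(p)$ using (\ref{eq:x2}) and then, in the sub-base case $l(p)=0$, on $l(q)$ using (\ref{eq:x1}). These two relations were derived precisely to enable such a reduction, so most of the work is bookkeeping: checking that the sign $(-1)^{nl}$ tracks correctly and that the factorization of $(p^{\rm op})^{*}q^{\rm op}$ in $L_k(Q^{\rm op})$ matches the factorization produced by the recursions.

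First I would dispose of the base case $l(p)=l(q)=0$, where $(p,q)=(e_i,e_i)$ for some $i\in Q_0$ and $l=0$. Since $y_{(e_j,e_j)}\in e_j L_k(Q^{\rm op})^{n}$ by the standing assumption, left-multiplying $x=\sum_{j\in Q_0} y_{(e_j,e_j)}$ by $e_i$ kills every summand except $j=i$, giving $e_i x = y_{(e_i,e_i)}$. This agrees with $(-1)^{n\cdot 0}(e_i^{\rm op})^{*}e_i^{\rm op}\, x = e_i x$.

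For the inductive step when $l(p)>0$, I would write $p=a\widehat{p}$ with $a\in Q_1$ and $t(a)=i$, so that $(\widehat{p},q)\in\La^{l+1}_{s(a)}$ has strictly smaller $l(p)$; by induction $y_{(\widehat{p},q)}=(-1)^{n(l+1)}(\widehat{p}^{\rm op})^{*}q^{\rm op} x$. Relation (\ref{eq:x2}) then yields
\[
y_{(p,q)}=(-1)^{n}(a^{\rm op})^{*}y_{(\widehat{p},q)}=(-1)^{n(l+2)}(a^{\rm op})^{*}(\widehat{p}^{\rm op})^{*}q^{\rm op} x.
\]
Applying the rule $(uv)^{*}=v^{*}u^{*}$ to the factorization $p^{\rm op}=\widehat{p}^{\rm op}a^{\rm op}$ in $Q^{\rm op}$ gives $(p^{\rm op})^{*}=(a^{\rm op})^{*}(\widehat{p}^{\rm op})^{*}$, and since $(-1)^{n(l+2)}=(-1)^{nl}$ the desired formula follows.

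The remaining case is $l(p)=0$ and $l(q)\geq 1$. Here I would factor $q=q'\gamma$ with $\gamma$ the first (rightmost) arrow of $q$, so that $s(\gamma)=s(q)=i$ and $(e_{t(\gamma)},q')\in\La^{l-1}_{t(\gamma)}$. Reading (\ref{eq:x1}) with old pair $(e_{t(\gamma)},q')$ and new pair $(e_{s(\gamma)},q'\gamma)=(e_i,q)$ gives the recursion $y_{(e_i,q)}=(-1)^{n}\gamma^{\rm op}y_{(e_{t(\gamma)},q')}$. By the inductive hypothesis $y_{(e_{t(\gamma)},q')}=(-1)^{n(l-1)}(q')^{\rm op}x$, and since $\gamma^{\rm op}(q')^{\rm op}=q^{\rm op}$ while $(e_i^{\rm op})^{*}=e_i$ is absorbed by $q^{\rm op}$ (using $t_{Q^{\rm op}}(q^{\rm op})=s(q)=i$), we obtain $y_{(e_i,q)}=(-1)^{nl}(e_i^{\rm op})^{*}q^{\rm op}x$. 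I expect the main obstacle to be purely notational: reconciling the $(\cdot)^{\rm op}$ and $(\cdot)^{*}$ conventions so that the final expression $(p^{\rm op})^{*}q^{\rm op}$ assembles term by term in agreement with what the two recursions produce.
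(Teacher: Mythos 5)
Your proposal is correct and follows essentially the same route as the paper: the base case $y_{(e_i,e_i)}=e_ix$, then induction on $l(q)$ via (\ref{eq:x1}) when $l(p)=0$, then reduction on $l(p)$ via (\ref{eq:x2}), with the sign $(-1)^{nl}$ and the identity $(p^{\rm op})^{*}=(a^{\rm op})^{*}(\widehat{p}^{\rm op})^{*}$ handled exactly as the paper does. The only cosmetic difference is that the paper iterates (\ref{eq:x2}) all at once down to $l(p)=0$, writing $y_{(p,q)}=(-1)^{nm}(\beta_m^{\rm op})^{*}\cdots(\beta_1^{\rm op})^{*}y_{(e_{s(q)},q)}$, whereas you phrase the same reduction as a one-step induction on $l(p)$.
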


\begin{proof} Clearly, we have $y_{(e_{i}, e_{i})}=e_{i} x$ in $L_k(Q^{\rm op})$.
For $l(p)=0$ and $l(q)>0$, write $q=\widetilde{q}\gamma$ with $\g\in Q_1$.
We use \eqref{eq:x1} to obtain $y_{(e_{s(q)}, q)}=(-1)^{nl}q^{\rm op}x$ in $L_k(Q^{\rm op})$ by induction on $l(q)$. For $l(p)>0$, write $p =\beta_{m}\cdots\beta_{1}$ with all $\b_k$ arrows in $Q$. By $(\ref{eq:x2})$ we have
$y_{(p, q)}=(-1)^{n}(\beta_{m}^{\rm op})^{*}y_{(\widehat{p}, q)}$.
We obtain $y_{(p, q)}=(-1)^{nm}(\beta_{m}^{\rm op})^{*}\cdots(\beta_{1}^{\rm op})^{*}y_{(e_{s(q)}, q)}$ by iterating \eqref{eq:x2}. Then we are done by $y_{(e_{s(q)},q)}=(-1)^{n(m+l)}q^{\rm op}x$ in $L_k(Q^{\rm op})$.
\end{proof}

We will construct a map $h:\P^{\bu}\xra \P^{\bu}$ of degree $n-1$, which will be used to prove $y-\rho(x)\in C^n$. To define $h$, we first assign to each pair $(p,q)\in\La^l_i$ an element $\theta_{(p,q)}$ in $L_k(Q^{\rm op})$.

For each $i\in Q_0$, define $\theta_{(e_{i}, e_{i})}
=\sum_{\{\gamma\in Q_{1}\;|\; s(\g)=i\}}\mu^{\gamma}_{(\gamma, e_{s(\gamma)})}\in e_iL_k(Q^{\rm op})^{n-1}$.
Here, refer to (\ref{eq:nu}) for the element $\mu^{\gamma}_{(\gamma, e_{s(\gamma)})}$. We define $\theta_{(e_{s(q)}, q)}$ inductively by 
\begin{equation}
\label{eq:ac1}
\theta_{(e_{s(q)}, q)}=(-1)^{n-1}(\g^{\rm op}\theta_{(e_{s(\widetilde{q})}, \widetilde{q})}
-\mu^{\g}_{(e_{s(\widetilde{q})}, \widetilde{q})})\in e_{s(q)}L_k(Q^{\rm op})^{n+l-1},\end{equation} where $q=\widetilde{q}\g$ with $l(q)=l$ and $\g\in Q_1$. Let $(p,q)\in\La^l_i$ with $l(p)>0$. We define $\theta_{(p, q)}$ by induction on the length of $p$ as follows: \begin{equation}
\label{eq:ac2}
\theta_{(p, q)}=(-1)^{n-1}(\beta^{\rm op})^{*}\theta_{(\widehat{p}, q)}+\sum_{\{\gamma\in Q_{1}\;|\; t(\g)=i\}}(\gamma^{\rm op})^{*}\mu^{\gamma}_{(p, q)}\in e_{i}L_k(Q^{\rm op})^{n+l-1},
\end{equation} where $p=\beta\widehat{p}$ with $\b\in Q_1$ is of length $l(q)-l$.

We define a $k$-linear map $h:\P^{\bu}\lra\P^{\bu}$ such that 
\begin{equation*}
h(e_{i}\zeta_{(p, q)})=\phi(\theta_{(p, q)}) ~~\text{and}~~h(\alpha\zeta_{(p, q)})=\phi_{\alpha}(\theta_{(p, q)})
\end{equation*} for each $i\in Q_{0}$, $l\in\Z$, $(p, q)\in\mathbf{\La}^{l}_{i}$,
and $\alpha\in Q_{1}$ with $s(\aa)=i$.

\begin{lem} \label{last}Let $x$ be the element in Lemma \ref{ruleproj} and $h$ be the above map. For each $i\in Q_0$, $l\in\Z$, $(p, q)\in\mathbf{\La}^{l}_{i}$, we have 
\begin{equation*}
\begin{cases} 
(y-\rho(x))(e_{i}\zeta_{(p, q)})=(\delta^{n+l-1} \circ h-(-1)^{n-1}h\circ\delta^l)(e_{i}\zeta_{(p, q)})&\\
(y-\rho(x))(\alpha\zeta_{(p, q)})=(\delta^{n+l-1} \circ h-(-1)^{n-1}h\circ\delta^l)(\alpha\zeta_{(p, q)})=0,
\end{cases}
\end{equation*} where $\alpha\in Q_{1}$ with $s(\aa)=i$.
\end{lem}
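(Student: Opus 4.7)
The plan is to verify the two equations separately: the $\alpha\zeta_{(p,q)}$ identity is an easy warm-up in which both sides vanish, while the $e_i\zeta_{(p,q)}$ identity is where the definition of $\theta_{(p,q)}$ is put to work.

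For the $\alpha\zeta_{(p,q)}$ equation, I would first use \eqref{eq:nu} to write $y(\alpha\zeta_{(p,q)}) = \phi_{\alpha}(y_{(p,q)})$, then substitute the formula $y_{(p,q)} = (-1)^{nl}(p^{\rm op})^{*}q^{\rm op}x$ from Lemma~\ref{ruleproj}. Right $B$-linearity of $\phi_\alpha$ together with \eqref{defphi} (which gives $\phi_\alpha((p^{\rm op})^*q^{\rm op}) = \alpha\zeta_{(p,q)}$) then collapses $\phi_\alpha(y_{(p,q)})$ to $(-1)^{nl}\alpha\zeta_{(p,q)}\cdot x = \rho(x)(\alpha\zeta_{(p,q)})$, so the left side vanishes. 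On the right, $h(\alpha\zeta_{(p,q)}) = \phi_{\alpha}(\theta_{(p,q)})$ lies in the span of the elements $\alpha\zeta_{(\bullet,\bullet)}$, on which $\delta^{\bullet}$ is zero by Definition~\ref{defproj}, and $\delta^{l}(\alpha\zeta_{(p,q)}) = 0$ by the same definition, so the right side vanishes as well.

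For the $e_i\zeta_{(p,q)}$ equation I would simplify the left side first. The splitting \eqref{eq:nu} gives $y(e_i\zeta_{(p,q)})$ as $\phi(y_{(p,q)})$ plus $\sum_{\{\gamma\,:\,t(\gamma)=i\}}\phi_{\gamma}(\mu^{\gamma}_{(p,q)})$; the same substitution from Lemma~\ref{ruleproj} together with right $B$-linearity of $\phi$ makes the first summand equal to $\rho(x)(e_i\zeta_{(p,q)})$, so the left side reduces to $\sum_{\{\gamma\,:\,t(\gamma)=i\}}\phi_{\gamma}(\mu^{\gamma}_{(p,q)})$. For the right side, Lemma~\ref{deltamodule} converts $\delta^{n+l-1}(\phi(\theta_{(p,q)}))$ to $\sum_{\{\gamma\,:\,t(\gamma)=i\}}\phi_{\gamma}(\gamma^{\rm op}\theta_{(p,q)})$, and I split into two subcases according to $l(p)$.

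In the subcase $l(p)>0$ with $p = \beta\widehat{p}$, I expand $\theta_{(p,q)}$ via \eqref{eq:ac2}; the Cuntz-Krieger relation $\gamma^{\rm op}(\gamma'^{\rm op})^{*} = \delta_{\gamma,\gamma'}e_{s(\gamma)}$ in $L_k(Q^{\rm op})$ collapses $\gamma^{\rm op}\theta_{(p,q)}$ to $(-1)^{n-1}\delta_{\gamma,\beta}\theta_{(\widehat{p},q)} + \mu^{\gamma}_{(p,q)}$, producing an extra $(-1)^{n-1}\phi_{\beta}(\theta_{(\widehat{p},q)})$ term that is exactly cancelled by $(-1)^{n-1}h(\delta^{l}(e_{i}\zeta_{(p,q)})) = (-1)^{n-1}\phi_{\beta}(\theta_{(\widehat{p},q)})$. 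In the subcase $l(p)=0$, $\delta^{l}(e_i\zeta_{(e_i,q)}) = \sum_{\{\beta\,:\,t(\beta)=i\}}\beta\zeta_{(e_{s(\beta)},q\beta)}$, and \eqref{eq:ac1} gives $\theta_{(e_{s(\beta)},q\beta)} = (-1)^{n-1}\bigl(\beta^{\rm op}\theta_{(e_i,q)} - \mu^{\beta}_{(e_i,q)}\bigr)$; after substitution, the two sums $\sum_{\gamma}\phi_{\gamma}(\gamma^{\rm op}\theta_{(e_i,q)})$ and $\sum_{\beta}\phi_{\beta}(\beta^{\rm op}\theta_{(e_i,q)})$ run over the same index set $\{\gamma\in Q_1\,:\,t(\gamma)=i\}$ and cancel, again leaving $\sum_{\beta}\phi_{\beta}(\mu^{\beta}_{(e_i,q)})$. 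The main obstacle I anticipate is tight bookkeeping with the signs $(-1)^{n-1}$ and with the truncation conventions $\widehat{p}$, $\widetilde{q}$; however, the recursive definitions \eqref{eq:ac1} and \eqref{eq:ac2} of $\theta$ are engineered precisely so that, once the Cuntz-Krieger relation is applied, the telescoping cancellation happens automatically. In particular, the base case $q = e_i$ in the second subcase is covered without needing the explicit form of $\theta_{(e_i,e_i)}$, since the two cancelling summations depend only on $\theta_{(e_i,q)}$ abstractly.
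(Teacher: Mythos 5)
Your proposal is correct and takes essentially the same approach as the paper's proof: both reduce $(y-\rho(x))(e_{i}\zeta_{(p,q)})$ to $\sum_{\{\gamma\in Q_{1}\;|\;t(\gamma)=i\}}\phi_{\gamma}(\mu^{\gamma}_{(p,q)})$ via \eqref{eq:nu} and Lemma~\ref{ruleproj}, compute $\delta^{n+l-1}\circ h$ by Lemma~\ref{deltamodule} and $h\circ\delta^{l}$ from Definition~\ref{defproj}, and obtain the final cancellation from the recursions \eqref{eq:ac1} and \eqref{eq:ac2}. The only difference is one of detail: you spell out the Cuntz--Krieger computation $\gamma^{\rm op}(\gamma'^{\rm op})^{*}=\delta_{\gamma,\gamma'}e_{s(\gamma)}$ behind that cancellation (and correctly observe that the explicit base value $\theta_{(e_{i},e_{i})}$ is never needed), whereas the paper compresses this into the single phrase ``the last equality uses \eqref{eq:ac1} and \eqref{eq:ac2}''.
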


\begin{proof} Recall from \eqref{defphi} the right $B$-module morphisms $\phi$ and $\phi_{\b}$ for $\b\in Q_1$. By \eqref{eq:nu} and Lemma \ref{ruleproj}, we have \begin{equation*}
\begin{cases}
\rho(x)(e_{i}\zeta_{(p, q)})
=(-1)^{nl}\phi({p^{\rm op}}^{*}q^{\rm op}){\cdot} x=\phi(y_{(p, q)})&\\
\rho(x)(\alpha\zeta_{(p, q)})=(-1)^{nl}\phi_{\aa}({p^{\rm op}}^{*}q^{\rm op}){\cdot} x
=\phi_{\aa}(y_{(p, q)})&\\
(y-\rho(x))(e_{i}\zeta_{(p, q)})
=\sum_{\{\gamma\in Q_{1}\;|\; t(\g)=i\}}\phi_{\gamma}(\mu^{\gamma}_{(p, q)});
\end{cases}
\end{equation*} Recall that $\d^l\circ \phi_{\b}=0$ for each arrow $\b\in Q_1$. It remains to prove $(\delta^{n+l-1} \circ h-(-1)^{n-1}h\circ\delta^l)(e_{i}\zeta_{(p, q)})=\sum_{\{\gamma\in Q_{1}\;|\; t(\g)=i\}}\phi_{\gamma}(\mu^{\gamma}_{(p, q)})$.

By the definition of $\d^l$, we have
\begin{equation*}(h\circ\delta^l)(e_{i}\zeta_{(p, q)})=\begin{cases}
 \phi_{\beta}(\theta_{(\widehat{p}, q)}), & \text{if $p=\beta\widehat{p}$};\\
 \sum_{\{\gamma\in Q_{1}\;|\;t(\g)=i\}}\phi_{\gamma}(\theta_{(e_{s(\gamma)}, q\gamma)}), & \text{if $l(p)=0$}.
\end{cases}
\end{equation*} By Lemma \ref{deltamodule}, we have 
$(\delta^{n+l-1} \circ h)(e_{i}\zeta_{(p, q)})=\sum_{\{\gamma\in Q_{1}\;|\; t(\g)=i\}}\phi_{\gamma}(\gamma^{\rm op}\theta_{(p, q)})$. Then the following equalities hold: 
\begin{equation*}\begin{split}&(\delta^{n+l-1} \circ h)(e_{i}\zeta_{(p, q)})
-(-1)^{n-1}(h\circ\delta^l)(e_{i}\zeta_{(p, q)})\\
&=\begin{cases}
 \sum_{\{\gamma\in Q_{1}\;|\;t(\g)=i\}}\phi_{\gamma}(\gamma^{\rm op}\theta_{(p, q)})-(-1)^{n-1}\phi_{\beta}(\theta_{(\widehat{p}, q)}),
 & \text{if $p=\beta\widehat{p}$};\\
 \sum_{\{\gamma\in Q_{1}\;|\;t(\g)=i\}}\phi_{\gamma}(\gamma^{\rm op}\theta_{(p, q)}-(-1)^{n-1}\theta_{(e_{s(\gamma)}, q\gamma)}), & \text{if $l(p)=0$}
\end{cases}\\
&=\sum_{\{\gamma\in Q_{1}\;|\; t(\g)=i\}}\phi_{\gamma}(\mu^{\gamma}_{(p, q)}).\end{split}
\end{equation*} Here, the last equality uses $(\ref{eq:ac1})$ and $(\ref{eq:ac2})$. 
\end{proof}

\vskip 10pt

\noindent{\emph{Proof of Theorem \ref{rightqproj}.}} It suffices to prove that $H(\rho)$ in (\ref{ccc}) is an isomorphism. By Lemma \ref{embeddingproj}, it remains to prove that $H^{n}(\rho)$ is surjective for any $n\in\Z$.
For any element $\overline{y}=y+C^{n}$ with $y\in Z^{n}$,
take $x                                                                                                                                                     =\sum_{j\in Q_{0}}y_{(e_{j}, e_{j})}\in B^n=L_k(Q^{\rm op})^{n}$.
By Lemma \ref{last}, we have $y-\rho(x)\in C^{n}$.
Then it follows that $\overline{y}=\overline{\rho(x)}$ in $H^n({\rm End}_A(\P^{\bu})^{\rm opp})$. \hfill $\square$

\section{Acknowledgements} The author thanks Professor Xiao-Wu Chen for inspiring discussions. The author thanks Australian Research Council grant DP160101481.

\vskip 10pt

{\footnotesize\noindent Huanhuan Li \\
Centre for Research in Mathematics, Western Sydney University, Australia\\
E-mail: h.li@westernsydney.edu.au}


\begin{thebibliography}{9999}


\bibitem{baajz} A. Alahmadi, H. Alsulami, S.K. Jain and E. Zelmanov, Leavitt path algebras of finite Gelfand-Kirillov dimension, J. Algebra Appl. $\mathbf{11}$ (6), 6pp. (2012)

\bibitem{bap} G. Abrams and G. Aranda Pino, The Leavitt path algebra of a graph, J. Algebra
$\mathbf{239}$ (2) (2005), 319-334.

\bibitem{bamp} P. Ara, M.A. Moreno and E. Pardo, Nonstable K-theory for graph algebras,
Algebr. Represent. Theory $\mathbf{10}$ (2) (2007), 157-178.







\bibitem{bbu} R.O. Buchweitz, Maximal Cohen-Macaulay modules and Tate cohomology over Gorenstein rings, unpublished manuscrip, 1987,
available at: http://hdl. handle. net/1807/16682.

\bibitem{bbn} M. B\"{o}kstedt and A. Neeman, Homotopy limits in triangulated categories, Compositio Math. $\mathbf{86}$ (1993), 209-234.



\bibitem{bc2}  X.W. Chen, Irreducible representations of Leavitt path algebras, Forum Math. $\mathbf{27}$ (1) (2015), 549-574.

\bibitem{bcy} X.W. Chen and Dong Yang, Homotopy categories, Leavitt path algebras and Gorenstein projective modules,
Inter. Math. Res. Notices, $\mathbf{10}$ (2015), 2597-2633. 



\bibitem{bj} P. J{\o}rgensen, The homotopy category of complexes of projective modules, Adv. Math. $\mathbf{193}$ (1) (2005), 223-232.



\bibitem{bke}  B. Keller, Deriving DG categories, Ann. Sci. $\acute{\rm E}$cole Norm.
    Sup. (4) $\mathbf{27}$ (1) (1994), 63-102.

\bibitem{bke2}  B. Keller, On the construction of triangle equivalences, In: Derived Equivalences for Group Rings, Lecture
Notes Math. $\mathbf{1685}$, Springer-Verlag, Berlin, 1998.



\bibitem{bkr} H. Krause, The stable derived category of a noetherian scheme,
Compositio Math. $\mathbf{141}$ (2005), 1128-1162.

\bibitem{bli} H.H. Li, The injective Leavitt complex, arXiv:1512.04178v1.

\bibitem{bn1} A. Neeman, The Grothendieck duality theorem via Bousfield's
techniques and Brown representability, J. Amer. Math. Soc. $\mathbf{9}$ (1996), 205-236.


\bibitem{bn3} A. Neeman, The homotopy category of flat modules, and Grothendieck duality, Invent. math. $\mathbf{174}$ (2) (2008), 255-308.



\bibitem{bo} D. Orlov, Triangulated categories of sigularities and D-branes in Landau-Ginzburg models, Trudy Steklov Math. Institute $\mathbf{204}$
(2004), 240-262.



\bibitem{bt} M. Tomforde, Uniqueness theorems and ideal structure for Leavitt path algebras, J. Algebra $\mathbf{318}$ (2007), 270-299.
\end{thebibliography}
\end{document}